\newfont{\cyr}{wncyr10 scaled 1100}
\newfont{\cyrr}{wncyr9 scaled 1000}
\theoremstyle{plain}
\newtheorem{theorem}{Theorem}[section]
\newtheorem*{teono}{Theorem}
\theoremstyle{remark}
\newtheorem{remark}[theorem]{Remark}
\newtheorem{question}[theorem]{Question}
\newtheorem{thm}{Theorem}[section]
\newtheorem{prop}[thm]{Proposition}
\newtheorem{lem}[thm]{Lemma}
\newtheorem{cor}[thm]{Corollary}
\newtheorem{que}[thm]{Question}
\newtheorem{rmk}[thm]{Remark}
\newtheorem{dfn}[thm]{Definition}
\newtheorem{ass}[thm]{Hypothesis}
\newcommand{\GL}{{\operatorname{GL}}}
\newcommand{\ha}{{\operatorname{ha}}}
\newcommand{\gerp}{{\mathfrak{p}}}
\newcommand{\gerq}{{\mathfrak{q}}}
\def\N{\mathbb{N}}
\newcommand{\set}[1]{\left\lbrace #1 \right\rbrace}
\newcommand{\mb}[1]{\mathbb{#1}}
\newcommand{\mc}[1]{\mathcal{#1}}
\newcommand{\mr}[1]{\mathrm{#1}}
\newcommand{\mf}[1]{\mathfrak{#1}}
\begin{document}

\title{Perfectoid Drinfeld modular forms} 
\author{Marc-Hubert Nicole \& Giovanni Rosso}

 \begin{abstract} 
In the first part, we revisit the theory of Drinfeld modular curves and $\pi$-adic Drinfeld modular forms for $\GL(2)$ from the perfectoid point of view.
In the second part, we review open problems for families of Drinfeld modular forms for $\GL(N)$.
 
 \end{abstract}

\subjclass[2010]{11F33, 11F52, 11G09}
\keywords{ $p$-adic families, Drinfeld modular forms, perfectoid spaces}

\address{M.-H. N.: Aix Marseille Universit, CNRS, Centrale Marseille, I2M, UMR 7373, 13453 Marseille. Mailing Address: Universit d'Aix-Marseille, campus de Luminy, case 907, Institut mathmatique de Marseille (I2M), 13288 Marseille cedex 9, FRANCE}
\email{\href{mailto: marc-hubert.nicole@univ-amu.fr}{marc-hubert.nicole@univ-amu.fr}}

\address{G.R.: Concordia University, Department of Mathematics and Statistics,
Montral, Qubec, Canada}
\email{\href{mailto:giovanni.rosso@concordia.ca}{giovanni.rosso@concordia.ca}}
\urladdr{\url{https://sites.google.com/site/gvnros/}}

\date{version of \today}

\maketitle

\section{Introduction} 
 Let $\mc C$ be a projective smooth curve over $\mb F_q$, $F=\mb F_q (\mc C)$ its function field, $\infty$ an $\mb F_q$-rational point and $A$ the ring of regular functions outside $\infty$. Fix an $A$-ideal $\mf N$, that we can suppose principal and a prime ideal $\gerp$ of norm $q^d$, coprime with $\mf N$. Let $\pi$ be a generator of $\gerp$ in $A_{\gerp}$.
 
In our previous paper \cite{NicoleRosso}, we explained how to adapt the eigenvariety machinery to the non-noetherian context of Drinfeld modular varieties associated to $\GL(n)$ for $n \in \N$, including Hida theory in the form of an analogue of the Vertical Control Theorem, a continuous analogue of Coleman's finite slope families and a classicality theorem of overconvergent Drinfeld modular forms. This led to a variety of open questions, some intrinsic to the set-up of Drinfeld modular forms.\\

In the first part of this paper, we illustrate in detail that the perfectoid approach to Shimura varieties pioneered by Scholze \cite{ScholzeTorsion} also works well for Drinfeld modular curves associated to $\GL(2)$ (where there are no technical difficulties at the boundary, exactly as for classical modular curves), and allows us to give an alternative treatment of overconvergent Drinfeld modular forms following Chojecki--Hansen--Johansson \cite{CHJ}. 
After reviewing the theory of Drinfeld modules and the Hodge--Tate--Taguchi map, we first show that there exists an infinite level Drinfeld modular curve which is a perfectoid space; we follow closely the construction of Scholze, first constructing an anti-canonical tower of a strict neighbourhood of the ordinary locus and then using the Hodge--Tate--Taguchi map to extend it to the whole Drinfeld modular curve, thus proving 

\begin{teono}[Thm. \ref{thm:perfectoid}]
Let $\mc X(\pi^m)$ be the Drinfeld modular curve of level $\Gamma(\gerp^m)$. There exists a perfectoid space 
\[
\mc X_{\infty} \sim \varprojlim_m \mc X(\pi^m)^{\mr{perf}},
\]
where $\mc X(\pi^m)^{\mr{perf}}$ is the perfection of $\mc X(\pi^m)$. This space is equipped with a  $\mr{GL}_2(A_{\gerp})$-equivariant Hodge--Tate--Taguchi map of adic spaces 
\[
{\Pi_{\mr{HTT}}}: \mc X_{\infty} \rightarrow \mb P^1.
\]
\end{teono}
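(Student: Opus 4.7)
The plan is to follow Scholze's strategy for classical modular curves \cite{ScholzeTorsion}, as refined in \cite{CHJ}, adapting each step to the characteristic-$p$ setting of Drinfeld modular curves. Two modifications are essential: one must perfect the finite-level $\mc X(\pi^m)$ throughout (since these are in positive characteristic), and the classical Hodge--Tate map is replaced by its function-field analogue of Taguchi. The argument then has two stages. First, construct a perfectoid anti-canonical tower above a strict neighbourhood of the ordinary locus. Second, use the Hodge--Tate--Taguchi map to translate this open subspace by $\mr{GL}_2(A_\gerp)$, thereby covering $\mc X_\infty$ including the supersingular locus.

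\textbf{Anti-canonical tower.} On the ordinary locus of $\mc X(\pi^m)$, the universal Drinfeld module admits a \emph{canonical} $\gerp$-torsion subgroup, namely the connected component of $\phi[\gerp]$; this extends to a strict neighbourhood $\mc X(\pi^m)(v)$ cut out by a lift of the Hasse invariant, as developed in \cite{NicoleRosso}. The \emph{anti-canonical} tower $\mc X(\pi^{m+k})(v)_{\mr{a}} \to \mc X(\pi^m)(v)$ parametrises level structures whose image in $\phi[\gerp]$ is transverse to the canonical subgroup. The transition map at each step is, up to a unit, a relative Frobenius, because quotienting by the anti-canonical subgroup recovers the base after a twist of Frobenius (this is the Drinfeld analogue of the familiar fact on classical ordinary loci). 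After perfection, the tower of Frobenius-like morphisms has identity transition maps in the limit, so $\mc X_\infty(v)_{\mr{a}} := \varprojlim_k \mc X(\pi^{m+k})(v)_{\mr{a}}^{\mr{perf}}$ is perfectoid by the standard tilde-limit criterion.

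\textbf{Propagation via HTT.} The Taguchi integral comparison map $T_\gerp(\phi) \otimes \cO \to \omega_\phi$ gives, upon passing to infinite level and projectivising the kernel, the sought $\mr{GL}_2(A_\gerp)$-equivariant morphism $\Pi_{\mr{HTT}}: \mc X_\infty \to \mb P^1$. By construction the anti-canonical open $\mc X_\infty(v)_{\mr{a}}$ lies in the preimage under $\Pi_{\mr{HTT}}$ of a standard affine chart $\mb P^1 \setminus \{\infty\}$, and translating by $\mr{GL}_2(A_\gerp)$ moves the excluded point arbitrarily; finitely many translates of $\mc X_\infty(v)_{\mr{a}}$ therefore cover $\mc X_\infty$ as an adic space. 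Since each translate is perfectoid and perfectoidness is local for the analytic topology, $\mc X_\infty$ is perfectoid. The tilde equivalence $\mc X_\infty \sim \varprojlim_m \mc X(\pi^m)^{\mr{perf}}$ is read off the construction: the rings of functions on the anti-canonical tower are, by design, the completed direct limit of the rings on its finite layers.

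\textbf{Main obstacle.} The delicate point is matching the anti-canonical strict neighbourhood, defined via a lift of Hasse, with the $\Pi_{\mr{HTT}}$-preimage of an affine chart \emph{away from the ordinary locus}, so that the $\mr{GL}_2(A_\gerp)$-translates genuinely cover the supersingular points. This forces one to construct $\Pi_{\mr{HTT}}$ as a morphism of adic spaces on the whole infinite-level curve and to identify its fibres geometrically. In Scholze's setting this rests on the $p$-adic Hodge theory of abelian varieties; here the necessary input is the integrality and continuity of Taguchi's comparison theorem for Drinfeld modules of rank two across the entire rigid curve, together with the verification that under it the anti-canonical condition translates into the preimage of a neighbourhood of $0 \in \mb P^1$. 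Once these geometric inputs are in place, the covering and gluing arguments proceed formally.
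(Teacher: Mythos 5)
There is a genuine gap in your second stage. Translating the anticanonical locus by $\GL_2(A_{\gerp})$ can never cover the supersingular points: an element of $\GL_2(A_{\gerp})$ only changes the trivialisation $\eta$ of $T_{\gerp}(\varphi)$, so it preserves the fibres of $\mc X_{\infty}\to\mc X$ and in particular the Hasse invariant; its translates of $\mc X_{\infty}(v)_a$ cover at most $\mc X_{\infty}(v)$, the strict neighbourhood of the ordinary locus. Moreover your covering logic runs backwards: knowing that $\mc X_{\infty}(v)_a$ is \emph{contained} in $\Pi_{\mr{HTT}}^{-1}(\mb P^1\setminus\{\infty\})$ and that translates of the chart cover $\mb P^1$ does not yield a cover of $\mc X_{\infty}$ by translates of $\mc X_{\infty}(v)_a$; you need containments in the other direction, namely preimages of suitable opens of $\mb P^1$ sitting \emph{inside} the perfectoid locus.

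What the paper actually does, following Scholze, is first extend the action on $\vert\mc X_{\infty}\vert$ from $\GL_2(A_{\gerp})$ to all of $\GL_2(F_{\gerp})$ (by changing the lattice, i.e.\ quotienting by the finite subgroup corresponding to $\eta\circ\gamma(A_{\gerp}^2)$), and then proves two lemmas: (i) for each $v$ there is an open neighbourhood $U$ of $\mb P^1(F_{\gerp})$ with $\vert\Pi_{\mr{HTT}}\vert^{-1}(U)\subset\vert\mc X_{\infty}(v)\vert$, by quasi-compactness of $\vert\mc X_{\infty}\vert\setminus\vert\mc X_{\infty}(v)\vert$ (this uses that only ordinary points map to $\mb P^1(F_{\gerp})$, Lemma \ref{lem:P^1(F)}); and (ii) $\GL_2(F_{\gerp})\cdot U=\mb P^1$, because powers of $\smallmat{1}{0}{0}{\pi}$ contract any point of $\mb P^1\setminus\{\infty\}$ into $U$, so by quasi-compactness of $\mb P^1$ finitely many $\GL_2(F_{\gerp})$-translates of $\vert\Pi_{\mr{HTT}}\vert^{-1}(U)$ cover $\vert\mc X_{\infty}\vert$; finally $\mc X_{\infty}(v)=\GL_2(A_{\gerp})\,\mc X_{\infty}(v)_a$ reduces everything to the anticanonical perfectoid piece. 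The essential matrices here lie in $\GL_2(F_{\gerp})\setminus\GL_2(A_{\gerp})$, which is exactly the ingredient your proposal omits. A secondary omission: your anticanonical tower is naturally a $\Gamma_0(\pi^m)$-tower (the Frobenius description only holds there), and passing to full level $\Gamma(\pi^m)$ requires checking that $\mc X(\pi^m)_a\to\mc X_0(\pi^m)_a$ is finite \'etale including at the cusps (via the Tate--Drinfeld module) and invoking almost purity; this step cannot be absorbed into the phrase ``after perfection the tower is perfectoid.''
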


Then, given an analytic weight $s \in \mb Z_p$ we define following \cite{CHJ} overconvergent Drinfeld modular forms as function of the variable $\mf z $ (a so-called fake Hasse invariant, which is the pullback via the Hodge--Tate--Taguchi map of the coordinate $z$ on $\mb P^1$) which satisfy the transformation formula

\begin{equation}
\label{aboveeq}
f(\gamma \mf z )= {(b\mf z+d)}^{-s}f(\mf z) 
\end{equation}

\noindent
for all $\gamma \in \Gamma_0(\gerp)$. More precisely, we construct a line bundle  $\omega^s$ on $\mc X_0(\pi)(v)^{\mr{perf}}$ consisting of functions satisfying the above formula \eqref{aboveeq} and we show that this sheaf is the pullback from $\mc X_0(\pi)(v)$ of the sheaf of overconvergent Drinfeld modular forms of weight $s$ defined in \cite{NicoleRosso}.

In the second part of this note, we treat a variety of open problems of widely varying level of difficulty in some detail. In brief: a conjectural $r=t$ theorem; asking for a better definition of the Fredholm determinant in the non-noetherian context; asking about families of generalized modular forms for Anderson motives; the study of slopes  la Gouva-Mazur in higher rank; classicity in infinite slope - an example of a problem arising only for Drinfeld modular forms; establishing a variant of Hida's Horizontal Control theorem for $\GL(n)$.

 \subsubsection*{Acknowledgement} We thank Andrea Bandini, Christian Johansson, David Hansen, Vincent Pilloni, Roberto Svaldi, Maria Valentino for useful discussions during the preparation of this paper.
\section{Perfectoid Drinfeld modular curves}

  Let $\mc X=\mc X(\mf N)$ be the compactified Drinfeld modular curve of full level $\mf N$ seen as an adic space over $\mr{Spa}(F_{\gerp},A_{\gerp})$. 

The main theorem of the section is the following:

\begin{thm}
There exists a perfectoid space
\[
\mc X_{\infty} \sim \varprojlim_n \mc X(\pi^{n})^{\mr{perf}}
\]
equipped with a natural map $\mr{GL}_2(F_{\gerp})$-equivariant Hodge--Tate--Taguchi period map to $\mb P^1$. 
\end{thm}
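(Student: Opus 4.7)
The plan is to adapt Scholze's construction of perfectoid Shimura varieties \cite{ScholzeTorsion} and the Chojecki--Hansen--Johansson variant \cite{CHJ} to Drinfeld modular curves, as sketched in the introduction. The argument decomposes into three stages: (i) build a perfectoid anti-canonical tower above a strict neighbourhood of the ordinary locus; (ii) construct the Hodge--Tate--Taguchi period map $\Pi_{\mr{HTT}}$; and (iii) propagate perfectoidness to the whole of $\mc X_\infty$ by translating along $\mr{GL}_2(F_\gerp)$.

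For (i), I would use the Hasse invariant $\ha$ for Drinfeld modules (essentially the leading coefficient in $\tau$ of the action of $\pi$) to define the strict neighbourhood $\mc X(v) \subset \mc X$ cut out by $|\ha| \geq |\pi|^v$. For $v$ small enough the canonical subgroup $C_n \subset \calE[\pi^n]$ of rank $q^{dn}$ exists on $\mc X(v)$. Let $\mc X(\pi^n)(v)_a \to \mc X(v)$ denote the cover parametrising those $\Gamma(\pi^n)$-structures whose $A/\pi^n$-component projects isomorphically onto $\calE[\pi^n]/C_n$ (the \emph{anti-canonical} direction). The transition maps in this tower are controlled by the $U_\pi$-correspondence, and the key computation, identical in form to the elliptic case, shows that $U_\pi$ acts on the Hasse coordinate as a contraction towards the $q^d$-Frobenius. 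Scholze's tilting criterion then yields that
\[
\mc X_\infty(v)_a \; := \; \varprojlim_n \mc X(\pi^n)(v)_a^{\mr{perf}}
\]
is perfectoid.

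For (ii), at infinite level the $\pi$-adic Tate module $T_\gerp \calE$ acquires a canonical $A_\gerp$-basis; the Hodge--Tate--Taguchi sequence, reviewed in the previous section, realises the universal differential $\omega_\calE$ as a line inside $(T_\gerp\calE)^* \otimes \calO$, defining the map $\Pi_{\mr{HTT}}: \mc X_\infty(v)_a \to \mb P^1$, whose $\mr{GL}_2(F_\gerp)$-equivariance is automatic from the functoriality of the construction. For (iii) one covers $\mb P^1$ by its two standard affinoid opens and observes that, up to $\mr{GL}_2(F_\gerp)$-translation, their pullbacks under $\Pi_{\mr{HTT}}$ are strict neighbourhoods of the anti-canonical tower built in (i), hence perfectoid; gluing produces $\mc X_\infty$. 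The cusps lie in the ordinary locus and, exactly as for classical modular curves, require no separate treatment.

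The main obstacle lies in (i): establishing that $U_\pi$ strictly increases the radius of overconvergence along the anti-canonical direction by a controllable amount, so that the transition maps become purely inseparable modulo $\pi$ in the limit. This is where the specific arithmetic of Drinfeld modules enters (the Newton polygon of $\pi$ replacing that of multiplication by $p$), but once the canonical subgroup and its contraction behaviour under $\ha$ are set up as in the first part of the paper, the perfectoid formalism and the tilting equivalence transfer essentially verbatim from the elliptic setting.
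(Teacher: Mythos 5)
Your overall strategy (anti-canonical tower over a strict neighbourhood of the ordinary locus, Hodge--Tate--Taguchi period map, propagation by $\mr{GL}_2(F_{\gerp})$-translation) is the same as the paper's, but two steps as you state them have genuine gaps. First, in stage (i) you build the tower directly at full level $\Gamma(\pi^n)$ and claim its transition maps become purely inseparable modulo $\pi$. That is false at full level: only the $\Gamma_0(\pi^m)$-anticanonical tower, whose transition map is quotient by the canonical subgroup, coincides with the $q^d$-Frobenius modulo $\pi$ (this is where the key fact that quotienting by $C_1$ multiplies the Hasse invariant by $q^d$ is used). The passage from $\Gamma_0(\pi^m)$ to $\Gamma_1(\pi^m)$ and $\Gamma(\pi^m)$ is a separate step: one must check these covers are (pro-)\'etale, including at the cusps, which the paper does by an explicit computation with the rank-two Tate--Drinfeld module, and then invoke Scholze's almost purity theorem to conclude that $\mc X_\infty(v)_a$ is affinoid perfectoid. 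Your remark that the cusps "require no separate treatment" skips exactly this verification; also, in characteristic $p$ there is no tilting criterion to invoke --- the relevant mechanism is that the tower is Frobenius-like modulo $\pi$, so its perfection-completion is perfectoid, which is why the statement involves $\mc X(\pi^n)^{\mr{perf}}$.

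Second, stage (iii) as written does not work. You only construct $\Pi_{\mr{HTT}}$ on $\mc X_\infty(v)_a$, but the covering argument needs the period map (at least on topological spaces) on \emph{all} of $\vert \mc X_\infty\vert$; this requires the result, proved separately in the paper, that every rank-two Drinfeld module over $C$ admits the Hodge--Tate--Taguchi line --- including points outside $\mc X(v)$, which are handled by noting that the boundary is ordinary (Tate--Drinfeld modules are ordinary) and that an isogeny moves any bad point into $\mc X(v)$. Moreover, the preimages of the two standard affinoids of $\mb P^1$ are \emph{not} strict neighbourhoods of the anti-canonical locus: the non-ordinary locus maps into $\mb P^1\setminus\mb P^1(F_{\gerp})$, which meets both standard affinoids, so those preimages contain non-ordinary points and cannot be perfectoid by translation of $\mc X_\infty(v)_a$. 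The correct argument is: (a) by quasi-compactness of $\vert\mc X_\infty\vert\setminus\vert\mc X_\infty(v)\vert$ there is an open neighbourhood $U$ of $\mb P^1(F_{\gerp})$ with $\vert\Pi_{\mr{HTT}}\vert^{-1}(U)\subset\vert\mc X_\infty(v)\vert$; (b) finitely many $\mr{GL}_2(F_{\gerp})$-translates of $U$ cover $\mb P^1$ (contraction towards $\mb P^1(F_{\gerp})$ by powers of $\mathrm{diag}(1,\pi)$ plus quasi-compactness of $\mb P^1$); and (c) $\mc X_\infty(v)=\mr{GL}_2(A_{\gerp})\,\mc X_\infty(v)_a$, with finitely many translates sufficing. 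Without these three ingredients the gluing you describe does not produce a perfectoid structure on all of $\mc X_\infty$.
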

\noindent
We need to consider the perfection of $ \mc X(\pi^{n})$ as in characteristic $p$ perfectoid algebras are necessarily perfect.

As in \cite{CHJ} we shall use this map to define some overconvergent $\pi$-adic modular forms of $p$-adic weight $s$ as functions on $\mb P^1$ satisfying the usual transformation property 
\[ 
f \vert_s \gamma (z) = j(\gamma,z)^s f(z),
\]
for a $\pi$-adic cocycle $ j(-,z)$.
The proof of the theorem follows the lines of \cite{ScholzeTorsion}: we first construct a perfectoid anti-canonical tower over a strict neighbourhood of the ordinary locus using the fact that we have a map 
\[
\mc X(q^{-dm}v) \rightarrow \mc X_0(\pi^{m}).
\]

\noindent
On points, the map sends a rank two Drinfeld module $\varphi$ to $(\varphi/C_m, \varphi[\pi^m]/C_m)$, for $C_m$ the canonical subgroup of level $m$. The remarkable feature of this map is that the Hasse invariant of $\varphi/C_m$ is the Hasse invariant of $\varphi$ multiplied by $q^d$. Hence, the overconvergence radius on the image is constant independent of $m$.

This allows us to construct an intermediate perfectoid object $\mc X_{0,\infty}(v)$ over $\mc X(v)$. Then we use the purity theorem to go from level $\Gamma_0$ to full level without much ado, as we are working over Drinfeld modular curves. In general the map from level $\Gamma_1$ to level $\Gamma_0$ is not \'etale on the boundary.

On $\mc X_{\infty}(v)$ we can the define the Hodge--Tate--Taguchi map to $\mb P^1$, and use it to show that, at the level of topological spaces, $\vert \mc X_{\infty} \vert$ is covered by finitely many translates of $\vert \mc X_{\infty}(v)\vert$. This gives a cover of $\vert \mc X_{\infty} \vert$ by perfectoid spaces, hence the results.

\begin{remark}
The same construction should work for general rank $r-1$, but studying the boundary becomes trickier. The Hodge--Tate--Taguchi map will take values in the flag variety parameterizing flags with blocks of size $r-1$ and $1$, which is isomorphic to $\mb P^{r-1}$. One can then define overconvergent Drinfeld modular forms as functions on $\mb P^{r-1}$ satisfying suitable transformation properties, exactly as in the analytic case \cite{BBPPartI}. 
\end{remark}

\subsection{Reminder on canonical subgroups and the Hodge--Tate--Taguchi decomposition}
Fix a  lift $\mr{Ha}$ of the Hasse invariant as in \cite[\S 4]{NicoleRosso} and let $\mr{ha}$ be the truncated valuation of the Hasse invariant. For $v \in \mb Q \cap [0,1]$ let $\mc X(v)$ be a strict neighbourhood of the ordinary locus of Drinfeld modules for which $\mr{ha} \leq v$.
Given a (formal) Drinfeld module $\varphi$, we can take its Taguchi dual $\varphi^{D}:=\mr{Hom}(
\varphi, \mr{CH})$, where CH denotes the Carlitz--Hayes module, the unique (formal) Drinfeld module of rank $1$ and good reduction at $\gerp$.

There is an Hodge--Tate--Taguchi map
\[
\mr{HTT}^{D}_{\varphi,m}:\varphi^{D}[\gerp^n](\overline{K}) \rightarrow \omega_{\varphi}/\pi^n \mc O_{\overline{K}}
\]
sending a torsion point $x_m \in \mr{Hom}(
\varphi, \mr{CH})$ to $x_m^* \textup{d}z$, for $\textup{d}z$ the canonical differential on CH.
We have a so-called dual version of it (without using a base of $\omega_{\mr{CH}}$):
\[
\mr{HTT}_{\varphi}:T_\gerp(\varphi) \rightarrow \mr{Lie}(\varphi^D)^\vee.
\]
Indeed, by definition of the Tate module, any $x \in T_\gerp(\varphi)$ can be seen as a map $F_{\gerp}/A_{\gerp} \rightarrow \varphi$. There is a dual map $x^D : \varphi^{D} \rightarrow \mr{CH}$, which defines a map 
 \[\mr{HTT}_{\varphi}(x)= \mr{Lie}(x^D) \in \mr{Lie}(\varphi^{D})^\vee.\]
 
We recall the following theorem \cite[Th\'eor\`eme 4.4]{NicoleRosso}

\begin{thm}\label{thm:sousgroupe}
Let $m \geq 1$ be a positive integer. 
Let $v \in \mb Q \cap [0,1]$  such that $ v< \frac{1}{2q^{d(m-1)}}$. 
\begin{itemize}
\item[(i)]
Over $\mc X(v)$, the $\mf p^m$-torsion of the generalised Drinfeld $(\overline{\mc E},\overline{\varphi})$ has a canonical subgroup $C_{\overline{\mc E},m}$ of $1$, dimension $1$ and level $m$;
\item[(ii)]  For all formal open $\mr{Spf}(R)$ of  $\mc X(v)$, the linearisation of the Hodge--Tate--Taguchi map
\[
\mr{HTT}:{C_{\overline{\mc E},m}^{D}}(\overline{R}) \otimes  \overline{R}/\pi^m \overline{R} \rightarrow \omega_{C_{\overline{\mc E},m}}\otimes  \overline{R}/\pi^m \overline{R} 
\]
 has cokernel killed by $\pi^w$, for all $w \in  \mb Q_{>0}$ such that $w \geq \frac{v}{q^d-1}$.
\end{itemize} 
\end{thm}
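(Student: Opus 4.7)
This result is \cite[Théorème 4.4]{NicoleRosso}, and my plan follows the approach of op.\ cit., which transposes the classical Katz--Abbes--Mokrane--Fargues theory of canonical subgroups to the equal-characteristic Drinfeld setting. The outline is: work locally on a formal affine open $\mr{Spf}(R) \subset \mc X(v)$; construct $C_{\overline{\mc E},1}$ via a Newton polygon analysis of $\pi$-multiplication; iterate by quotienting to obtain $C_{\overline{\mc E}, m}$; and then estimate the cokernel of the linearised Hodge--Tate--Taguchi map.

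For the construction of $C_{\overline{\mc E},1}$ I would, over a convenient formal affine, write $\varphi_\pi(X) = \pi X + a X^{q^d} + u X^{q^{2d}}$ with $u \in R^\times$ and $v(a) = \mr{ha} \leq v$, so that the associated $\F_q$-linear Newton polygon has vertices $(1,1)$, $(q^d, \mr{ha})$, $(q^{2d}, 0)$. The hypothesis $\mr{ha} \leq v < 1/2$ makes this polygon strictly concave, so the non-zero roots of $\varphi_\pi$ over a geometric point split into two $\F_{q^d}$-packets: those of valuation $(1-\mr{ha})/(q^d-1)$, forming the connected ``canonical'' piece of rank $q^d$, and those of valuation $\mr{ha}/(q^d(q^d-1))$, forming the quasi-\'etale piece. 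A standard Lubin-type argument (alternatively, a Raynaud / Harder--Narasimhan factorisation of the finite flat group scheme $\varphi[\pi]$) globalises this pointwise separation to a finite flat subgroup scheme $C_{\overline{\mc E},1} \subset \varphi[\pi]$ of height and dimension one defined over $\mc X(v)$.

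I then run the induction on $m$. The decisive input is the behaviour of the Hasse invariant under the canonical isogeny: a short calculation on the coefficient of $X^{q^d}$ in the push-forward Drinfeld module $\varphi' := \varphi/C_{\overline{\mc E},1}$ shows that $\mr{ha}(\varphi') = q^d \cdot \mr{ha}(\varphi)$. Thus the numerical hypothesis $v < 1/(2q^{d(m-1)})$ translates into $\mr{ha}(\varphi') \leq q^d v < 1/(2q^{d(m-2)})$, which is precisely the inductive hypothesis needed to produce $C_{\overline{\mc E}', m-1}$; I then set $C_{\overline{\mc E}, m}$ to be the preimage of $C_{\overline{\mc E}', m-1}$ under the isogeny $\varphi[\pi^m] \twoheadrightarrow \varphi'[\pi^{m-1}]$. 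This yields (i).

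For (ii), I would analyse the linearised $\mr{HTT}$ modulo $\pi^m$ in concrete coordinates on a formal lift. The starting observation is that on $C_{\overline{\mc E},1}^{D}$ the cokernel is killed by a power of $\pi$ whose exponent equals, up to normalisation, the minimal valuation of a non-zero root of the dual $\F_{q^d}$-linear polynomial, namely $\mr{ha}/(q^d-1) \leq v/(q^d-1)$. Iterating along the tower of canonical isogenies $\varphi \twoheadrightarrow \varphi' \twoheadrightarrow \cdots$ and assembling the contributions via the short exact sequence $0 \to C_{\overline{\mc E},1}^D \to C_{\overline{\mc E},m}^D \to (C_{\overline{\mc E},m}/C_{\overline{\mc E},1})^D \to 0$, the intermediate defects telescope against the scaling $\mr{ha}(\varphi^{(i)}) = q^{di} \mr{ha}(\varphi)$, leaving the single bound $w \geq v/(q^d-1)$, independent of $m$. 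The main obstacle, to my mind, is precisely this last telescoping: ensuring that the $q^d$-factor accumulated at each quotient is exactly cancelled by the suppressed valuation on the dual side, so that both the factor of two in (i) and the exponent in (ii) come out sharp rather than drifting with $m$.
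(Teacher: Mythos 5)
A preliminary remark on the comparison itself: this paper does not prove Theorem \ref{thm:sousgroupe} --- it is recalled verbatim from \cite[Th\'eor\`eme 4.4]{NicoleRosso} --- so there is no in-paper proof to measure your argument against. The only hints given here are that the canonical subgroup is a lift of the kernel of Frobenius modulo $\pi$ (cf.\ the proof of the perfectoid tower theorem), that the Hasse-invariant scaling under the canonical isogeny is a Newton-polygon computation \`a la Katz (Lemma \ref{lem:lubinkatz}), and that the almost-surjectivity estimate in (ii) was ``done by hand''. Your outline is consistent with all of this and with the classical Katz--Lubin--Fargues template: the slope separation $(1-\mathrm{ha})/(q^d-1)$ versus $\mathrm{ha}/(q^d(q^d-1))$, the scaling $\mathrm{ha}(\varphi/C_1)=q^d\,\mathrm{ha}(\varphi)$ explaining the hypothesis $v<1/(2q^{d(m-1)})$, and the inductive definition of $C_m$ as the preimage of the level-$(m-1)$ canonical subgroup of $\varphi/C_1$ are the expected ingredients and are correct.

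Two points fall short of a proof. First, $\varphi_\pi$ is a three-term polynomial only when $d=1$; in general it has coefficients $a_i$ in every degree $q^i$, $0\le i\le 2d$, and you must check they do not disturb the break at abscissa $q^d$: for $i<d$ one shows $a_i\equiv 0 \pmod{\pi}$ (over any $A/\gerp$-algebra the coefficients of $\varphi_\pi$ below $\tau^{d}$ vanish, by the universal-ring computation), while for $d<i<2d$ a point $(q^i,\mathrm{val}(a_i))$ with $\mathrm{val}\ge 0$ cannot create an earlier vertex once $\mathrm{ha}<1/2$; also the cusps (Tate--Drinfeld modules, which are ordinary) need a word. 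These are fixable omissions. Second, and more seriously, for part (ii) with $m\ge 2$ the entire content of the theorem is that the bound $w\ge v/(q^d-1)$ does not drift with $m$, even though the Hasse invariants along the isogeny tower grow like $q^{di}v$. Your d\'evissage via $0\to C_1^D\to C_m^D\to (C_m/C_1)^D\to 0$ has the right shape, but applying the inductive hypothesis to $\varphi/C_1$ naively returns the weaker exponent $q^d v/(q^d-1)$ for the quotient piece, and you do not exhibit the compensating mechanism (a comparison of $\omega_{C_m}$ with $\omega_{C_1}$ and $\omega_{C_{m-1}(\varphi/C_1)}$, or the explicit coordinate computation the authors allude to) that cancels this factor. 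You identify this yourself as the main obstacle; as written it is an acknowledged gap rather than an argument, so part (ii) for $m\ge 2$ remains unproved in your proposal.
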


We now prove a useful lemma
\begin{lem}\label{lem:lubinkatz}
Let $x=({\mc E}_x,{\varphi}_x)$ be a Drinfeld module and suppose that $\ha(x)=v  \leq  \frac{1}{q^d+1} $. Then $\ha(({\mc E}_x/C_{\overline{\mc E},1},{\varphi}_x/C_{\overline{\mc E},1}))=q^d v$.
\end{lem}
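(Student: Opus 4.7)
The plan is a direct Newton-polygon computation of $\ha(\varphi_x/C_{\overline{\mc E},1})$ via the factorisation $[\pi]_{\varphi_x/C_{\overline{\mc E},1}} = \alpha \circ \beta$, where $\alpha\colon \varphi_x \to \varphi_x/C_{\overline{\mc E},1}$ is the canonical quotient by the level-$1$ canonical subgroup and $\beta$ is its dual isogeny. This is the function-field analogue of the classical Lubin--Katz formula for the Hasse invariant of $E/C_1$ on an elliptic curve.

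First, on a formal affine open of $\mc X(v)$ I would expand $[\pi]_{\varphi_x}(T) = \sum_{i \geq 0} c_i T^{q^i}$ in a local parameter $T$ on the formal Drinfeld module, with $c_0 = \pi$, $v(c_d) = v$, and $c_{2d}$ a unit. The hypothesis $v \leq 1/(q^d+1)$ lies strictly below the critical value $q^d/(q^d+1)$, so $(q^d - 1, v)$ is a genuine vertex of the Newton polygon of $[\pi]_{\varphi_x}(T)/T$ between $(0,1)$ and $(q^{2d}-1, 0)$. The nonzero $\pi$-torsion points of $\varphi_x$ accordingly split into $q^d - 1$ points of valuation $(1-v)/(q^d-1)$, which by Theorem~\ref{thm:sousgroupe} are precisely the nonzero elements of $C_{\overline{\mc E},1}$, and $q^d(q^d - 1)$ points of valuation $v/(q^d(q^d-1))$.

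Next, I extract the valuations of the coefficients of $\alpha$ and $\beta$. Write $\alpha(T) = \sum_{i=0}^d c'_i T^{q^i}$; since the $q^d - 1$ nonzero roots of $\alpha$ are exactly the nonzero elements of $C_{\overline{\mc E},1}$, the Newton polygon of $\alpha(T)/T$ is a single segment of slope $-(1-v)/(q^d-1)$, and normalising $c'_d$ to a unit gives $v(c'_0) = 1-v$ and $v(c'_i) \geq (1-v)(q^d-q^i)/(q^d-1)$. Write $\beta(T) = \sum_{j=0}^d d_j T^{q^j}$; the linear-in-$T$ coefficient of $\beta(\alpha(T)) = [\pi]_{\varphi_x}(T) = \pi T + \cdots$ gives $d_0 c'_0 = \pi$, hence $v(d_0) = v$. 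Evaluating $\alpha$ at a representative $t \in \varphi_x[\pi] \setminus C_{\overline{\mc E},1}$, so $v(t) = v/(q^d(q^d-1))$, a short check using $v \leq 1/(q^d+1)$ shows that the dominant term of $\alpha(t)$ is $c'_d t^{q^d}$, giving $v(\alpha(t)) = v/(q^d - 1)$. Since $\ker\beta = \alpha(\varphi_x[\pi])$, all nonzero elements of $\ker\beta$ share this valuation, forcing the Newton polygon of $\beta(T)/T$ to be a single segment from $(0, v)$ to $(q^d-1, 0)$; in particular $v(d_d) = 0$ and $v(d_j) \geq v(q^d - q^j)/(q^d - 1)$.

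Finally, I compute the coefficient of $T^{q^d}$ in $[\pi]_{\varphi_x/C_{\overline{\mc E},1}}(T) = \alpha(\beta(T))$. By Lucas' theorem the only multinomial contributions to $T^{q^d}$ that are not divisible by $\pi$ come from the ``trivial'' expansions $c'_i d_{d-i}^{q^i}$, yielding
\[
c'_0 d_d + c'_1 d_{d-1}^q + \cdots + c'_d d_0^{q^d}.
\]
A direct estimate using the bounds on $v(c'_i)$ and $v(d_j)$ from the previous step gives
\[
v(c'_i d_{d-i}^{q^i}) - q^d v \; \geq \; \frac{q^d - q^i}{q^d - 1}\bigl(1 - v(q^d+1)\bigr) \; \geq \; 0,
\]
the last inequality being precisely the hypothesis, with strict inequality for $i < d$ whenever $v < 1/(q^d+1)$. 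Hence the dominant term of the sum is $c'_d d_0^{q^d}$ of valuation exactly $q^d v$, which is by definition $\ha(\varphi_x/C_{\overline{\mc E},1})$; the boundary case $v = 1/(q^d+1)$ follows by continuity. The main delicate point is to verify that the remaining multinomial contributions (each carrying at least one factor of $\pi$ from the multinomial coefficient) have valuation strictly greater than $q^d v$: this uses the hypothesis $v < 1/q^d$, which is implied by ours, combined with the lower bounds on $v(d_j)$. Uniformity of the whole description over $\mc X(v)$ is routine given Theorem~\ref{thm:sousgroupe}.
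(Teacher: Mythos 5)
Your argument is the same Newton--polygon computation \`a la Lubin--Katz that the paper invokes by simply citing \cite[Theorem 3.10.7]{Katz}, carried out explicitly: the valuations you assign to the torsion points, to the coefficients of $\alpha$ and $\beta$ (via $d_0c'_0=\pi$ and the single-slope polygons), and the final estimate $v(c'_i d_{d-i}^{q^i})-q^dv\geq \frac{q^d-q^i}{q^d-1}\bigl(1-v(q^d+1)\bigr)$ are all correct, so for $v<\frac{1}{q^d+1}$ your computation does prove the lemma. One step should simply be deleted rather than ``verified'': you are in characteristic $p$, so $T\mapsto T^{q^i}$ is a ring endomorphism and the composite $\alpha(\beta(T))$ of $\mb F_q$-linear polynomials has no cross terms at all; the coefficient of $T^{q^d}$ is exactly $\sum_{i} c'_i d_{d-i}^{q^i}$. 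The appeal to Lucas' theorem and the ``delicate point'' about multinomial contributions carrying a factor of $\pi$ is vacuous (those multinomial coefficients vanish in the base ring, they are not merely $\pi$-divisible), and this is precisely where the Drinfeld case is easier than Katz's mixed-characteristic computation.

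The genuine gap is the boundary case $v=\frac{1}{q^d+1}$, which the statement includes. There your displayed inequality is no longer strict for $i<d$, so a cancellation between $c'_dd_0^{q^d}$ and the lower terms could a priori raise the valuation, and the polygon argument only yields $\ha(\varphi_x/C_{\overline{\mc E},1})\geq q^dv$; this mirrors the classical situation, where \cite[Theorem 3.10.7]{Katz} asserts equality only for Hasse invariant strictly below $1/(p+1)$. The proposed fix ``by continuity'' does not work: in non-archimedean geometry a point with $\ha=\frac{1}{q^d+1}$ is not a limit of points with $\ha<\frac{1}{q^d+1}$ (to decrease the valuation of the relevant parameter one must move by at least its absolute value), so equality on the open locus says nothing about the circle $\ha=\frac{1}{q^d+1}$. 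Either restrict your proof to $v<\frac{1}{q^d+1}$ -- note that in the paper the lemma is only ever applied to points of $\mc X(q^{-dm}v)$ with $m\geq 1$, whose Hasse invariants are $\leq v/q^d$, strictly below the boundary -- or give a separate argument (not continuity) for the boundary value, bearing in mind that Theorem \ref{thm:sousgroupe} alone will not force the needed non-cancellation there.
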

\begin{proof}
As $   \frac{1}{q^d+1} \leq  \frac{1}{2}$ we have a canonical subgroup and the proof is, exactly as in the case of elliptic curves, a study of the Newton polygon, see \cite[Theorem 3.10.7]{Katz}.
\end{proof}

Let $({\mc E},{\varphi})$ be a Drinfeld module over an algebraically closed and complete field $C /F_{\gerp}$. We will define the Hodge--Tate--Taguchi decomposition of $T_\gerp(\varphi)$. 
\begin{thm}
\label{previousthm}
Let $\varphi$ be a Drinfeld module over $K$ of rank two, and let $\varphi^D$ its Taguchi dual. 
We have a surjective map 
\[
 T_\gerp(\varphi^D)\otimes_{A_{\gerp}} \overline{K}  \rightarrow \mr{Lie}(\varphi)^{\vee}\otimes_{K} \overline{K} 
\]
which induces a line $\mr{Lie}(\varphi_{\overline{K}})$ inside $T_\gerp(\varphi)\otimes_{A_{\gerp}} \overline{K} $.
\end{thm}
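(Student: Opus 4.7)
The plan is to pass to the inverse limit in the finite-level Hodge--Tate--Taguchi maps of Theorem~\ref{thm:sousgroupe}, applied to the Taguchi dual $\varphi^D$, and then to extract the line in $T_\gerp(\varphi) \otimes_{A_\gerp} \overline{K}$ using the canonical duality between the Tate modules of $\varphi$ and $\varphi^D$ arising from the identity $\varphi^D = \mr{Hom}(\varphi, \mr{CH})$. The map itself is already implicit in the paper's earlier construction: an element $x \in T_\gerp(\varphi^D) = \mr{Hom}(F_\gerp/A_\gerp, \varphi^D)$ has, via biduality $(\varphi^D)^D = \varphi$, a Taguchi dual $x^D : \varphi \to \mr{CH}$; taking Lie algebras and trivializing $\mr{Lie}(\mr{CH})$ via the canonical differential $dz$ yields $\mr{Lie}(x^D) \in \mr{Lie}(\varphi)^\vee$, which defines the map $\mr{HTT}_{\varphi^D}$ in the statement.

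For surjectivity after $\otimes \overline{K}$, I apply Theorem~\ref{thm:sousgroupe}(ii) to $\varphi^D$: the cokernel of the finite-level canonical-subgroup HTT map is killed by $\pi^{w}$ with $w \geq v/(q^d-1)$, under the constraint $v < 1/(2q^{d(m-1)})$. As $m \to \infty$ the allowed $v$ shrinks to $0$, so the inverse limit of these cokernels is annihilated by arbitrarily small positive powers of $\pi$ and hence vanishes after inverting $\pi$. In the ordinary case, passing from the canonical subgroup to the full Tate module is automatic: the \'etale direction in $T_\gerp(\varphi^D)$ maps to zero under $\mr{HTT}$, so the formal (canonical-subgroup) contribution already exhausts the target $\mr{Lie}(\varphi)^\vee$.

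To produce the line, note that the surjection has two-dimensional source and one-dimensional target, so its kernel $V$ is a one-dimensional $\overline{K}$-subspace of $T_\gerp(\varphi^D) \otimes \overline{K}$. The defining identity $\varphi^D = \mr{Hom}(\varphi, \mr{CH})$ induces a canonical perfect pairing
\[
T_\gerp(\varphi) \otimes_{A_\gerp} T_\gerp(\varphi^D) \longrightarrow T_\gerp(\mr{CH}),
\]
with $T_\gerp(\mr{CH})$ free of rank one. The orthogonal complement of $V$ under this pairing is the sought-after line inside $T_\gerp(\varphi) \otimes \overline{K}$, canonically identified with $\mr{Lie}(\varphi) \otimes_K \overline{K}$ in the style of the classical Hodge--Tate decomposition for $p$-divisible groups. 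The main obstacle is surjectivity on the supersingular locus, where Theorem~\ref{thm:sousgroupe} does not apply; this requires either a direct Newton-polygon analysis at supersingular points or an appeal to Taguchi's $\pi$-adic Hodge theory for Drinfeld modules, from which the ``T'' in ``HTT'' derives.
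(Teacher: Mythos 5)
Your construction of the map and of the line is the same as the paper's: almost-surjectivity from the canonical subgroup theorem, then the pairing $T_\gerp(\varphi)\otimes T_\gerp(\varphi^D)\to T_\gerp(\mr{CH})$, $\langle x,y\rangle(z)=y(z)(x(z))$, to convert the kernel of the surjection into a line in $T_\gerp(\varphi)\otimes\overline{K}$ (your ``orthogonal complement of the kernel'' is exactly the image of the dualized injection used in the paper). Two small slips in the surjectivity step: Theorem \ref{thm:sousgroupe} is a statement about points of $\mc X(v)$, i.e.\ about $\varphi$ itself (its HTT map already has the dual $C^D_{\overline{\mc E},m}$ as source), so it is applied to $\varphi$, not ``to $\varphi^D$''; and the inverse-limit-of-cokernels discussion with $v\to 0$ is both unnecessary and slightly off (for a fixed module with positive Hasse invariant only finitely many levels of canonical subgroup exist) --- a cokernel killed by a single bounded power $\pi^w$ already gives surjectivity after inverting $\pi$.

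The genuine gap is the case you explicitly set aside: a point whose (good-reduction) model does not lie in $\mc X(v)$. Saying it ``requires either a direct Newton-polygon analysis or an appeal to Taguchi's $\pi$-adic Hodge theory'' is not a proof, and the second option is precisely what the paper is built to avoid --- the authors stress elsewhere that the $p$-adic Hodge-theoretic proof of almost surjectivity has no available analogue here and that Theorem \ref{thm:sousgroupe} was proved by hand. The paper closes this case differently and concretely: since $\mc X$ is proper, $\varphi$ extends to a generalised Drinfeld module over $\mc O_K$; if the resulting point is outside $\mc X(v)$ it cannot be a boundary point because Tate--Drinfeld modules are ordinary, and then by \cite[Lemme 5.2]{NicoleRosso} one may quotient by a suitable subgroup of the $\gerp^m$-torsion to move the point into $\mc X(v)$. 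Since an isogeny only replaces $T_\gerp(\varphi)$ by a commensurable lattice in $T_\gerp(\varphi)\otimes_{A_\gerp}F_\gerp$ (and changes $\mr{Lie}$ by an isomorphism after inverting $\pi$), surjectivity after $\otimes\,\overline{K}$ and the resulting line transfer back to $\varphi$. Without this isogeny (or an equivalent) argument, your proof only covers Drinfeld modules with small Hasse invariant, so as written it is incomplete.
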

\begin{proof}
As $\mc X$ is proper, we can find a generalised Drinfeld module $\widetilde{\varphi}$ over $\mc O_K$ whose generic fiber is $\varphi$. 
\\

If $\widetilde{\varphi}$ corresponds to a point in $\mc X(v)$, then let $x$ be a lift to $ T_\gerp(\varphi^D)$ of a generator of its canonical subgroup $C_{\widetilde{\varphi},m}^{D}(\overline{K})$. By Theorem \ref{thm:sousgroupe} we know that, up to $\pi^{wn}$, $\mr{HTT}^D_{\widetilde{\varphi}}(x)$ generates $\omega_{C_{\widetilde{\varphi},m}}$, and hence over $K$ the Hodge--Tate--Taguchi map is surjective.
Note that $T_\gerp(\varphi^D)$ is the dual representation: given
$y\in T_\gerp(\varphi^D)=\mr{Hom}(F_{\gerp}/A_{\gerp}, \mr{Hom}(\varphi, \mr{CH}))$ and $x \in T_\gerp(\varphi)=\mr{Hom}(F_{\gerp}/A_{\gerp}, \varphi)$, we get $\langle x,y \rangle \in \mr{Hom}(F_{\gerp}/A_{\gerp}, \mr{CH})$ defined by
\[
\langle x,y\rangle (z) : = y(z)(x(z)).
\]
By duality, we get a line in $T_\gerp(\varphi)\otimes_{A_{\gerp}} \overline{K}$. Note that this line is the kernel of $\mr{HTT}_{\varphi} \otimes \mr{id}$.

If the point corresponding to $\widetilde{\varphi}$ is not in $\mc X(v)$ then it is not on the boundary as Tate--Drinfeld modules are ordinary, see e.g., \cite[Lemma 4.1]{HattoriKM}. By \cite[Lemme 5.2]{NicoleRosso} quotienting $\widetilde{\varphi}$ by a suitable subgroup of the $\gerp^m$-torsion will move the point into $\mc X(v)$, and the Tate modules of two isogenous Drinfeld modules are merely different lattices in $T_\gerp(\varphi)\otimes_{A_{\gerp}} F_{\gerp} $.
\end{proof}

\begin{rmk}
If $r>2$, the Hodge--Tate--Taguchi map is surjective only on the locus of good reduction. Indeed, if the reduction of the Drinfeld modules $\varphi$ is not good, the corresponding point in $\mc X(\mc O_C)$ will fall in the boundary, which for $r>2$ is not necessarily ordinary.
\end{rmk}

\begin{dfn}[Hodge--Tate--Taguchi period map]\label{def:HTT}
Given a trivialisation $\eta : T_\gerp(\varphi) \cong A^2_{\gerp}$, we can define an element ${\Pi_{\mr{HTT}}}(\varphi, \eta) \in  \mb P^{1}(\overline{K})$ by picking the line in ${\overline{K}}^2$ given by $\mr{Lie}(\varphi_{\overline{K}})$.
\end{dfn}
Before constructing the perfectoid tower, we prove an important property of the Hodge--Tate--Taguchi period map:
\begin{lem}\label{lem:P^1(F)}
Let $({\mc E},{\varphi})$ be a rank $2$ Drinfeld module over an algebraically closed field $K$, then $ {\Pi_{\mr{HTT}}}(\varphi, \eta) \in \mb P^{1}(F_{\gerp})$ if and only if $({\mc E},{\varphi})$ is $\gerp$-ordinary.
\end{lem}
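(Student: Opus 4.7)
The plan is to identify the $F_\gerp$-rational points of $\mb P^1$ with rank-one $A_\gerp$-sublattices in $T_\gerp(\varphi)$ whose generic fibre is the HTT line, and to show such sublattices exist exactly in the $\gerp$-ordinary case. Since $\mc X$ is proper, I first spread $\varphi$ to a generalised Drinfeld module $\widetilde\varphi$ over $\mc O_K$, as in the proof of Theorem \ref{previousthm}; and since Tate--Drinfeld modules are $\gerp$-ordinary by \cite[Lemma 4.1]{HattoriKM}, I may assume $\widetilde\varphi$ has good reduction.

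For the implication $(\Leftarrow)$, assume $\widetilde\varphi$ is $\gerp$-ordinary. Then $\widetilde\varphi[\gerp^\infty]$ admits a canonical connected--\'etale decomposition, with connected part $\mu$ a formal $A_\gerp$-module of height one, and similarly for $\varphi^D$. By the formula $\mr{HTT}^D(x)=x^*\textup{d}z$ the dual Hodge--Tate--Taguchi map $T_\gerp(\varphi^D)\to\omega_\varphi$ vanishes on the \'etale summand $T_\gerp((\varphi^D)^{\mr{et}})$, since an \'etale morphism to $\mr{CH}$ pulls $\textup{d}z$ back to zero; hence its kernel is exactly $T_\gerp((\varphi^D)^{\mr{et}})$. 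Under the Taguchi pairing $T_\gerp(\varphi)\otimes T_\gerp(\varphi^D)\to T_\gerp(\mr{CH})$, the orthogonal of $T_\gerp((\varphi^D)^{\mr{et}})$ is $T_\gerp(\mu)$, because Taguchi duality interchanges connected and \'etale parts. This orthogonal is precisely the HTT line by the construction in Theorem \ref{previousthm}, so the HTT line is spanned by an $A_\gerp$-sublattice of $T_\gerp(\varphi)$ and is therefore $F_\gerp$-rational.

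For the implication $(\Rightarrow)$, if $\Pi_{\mr{HTT}}(\varphi,\eta)\in\mb P^1(F_\gerp)$, intersecting the HTT line with the lattice $T_\gerp(\varphi)\cong A_\gerp^2$ (via $\eta$) yields a saturated rank-one $A_\gerp$-direct summand $N\subset T_\gerp(\varphi)$. In generic $A$-characteristic the Tate module functor on $\gerp$-divisible $A$-modules is fully faithful, so $N$ is the Tate module of a height-one $\gerp$-divisible $A$-submodule $H\subset\varphi[\gerp^\infty]$; its scheme-theoretic closure inside $\widetilde\varphi[\gerp^\infty]$ extends $H$ to a flat $\gerp$-divisible $A$-submodule over $\mc O_K$, whose reduction is a non-trivial proper sub-$\gerp$-divisible $A$-submodule of $\bar\varphi[\gerp^\infty]$. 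If $\bar\varphi$ were supersingular, however, then $\bar\varphi[\gerp^\infty]$ would be the unique formal $A_\gerp$-module of height two over the algebraically closed residue field, which is simple as a formal $A$-module: in the standard basis of its Dieudonn\'e module the Frobenius satisfies $F^2=\pi$, ruling out any $F$-stable rank-one sublattice. This contradiction forces $\bar\varphi$ to be ordinary, so that $(\mc E,\varphi)$ is $\gerp$-ordinary.

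The main obstacle is the Taguchi-dual bookkeeping --- tracking which \'etale or connected summand carries the kernel of HTT and then transporting it back through the pairing to $T_\gerp(\varphi)$ --- together with the appeal to the simplicity of the height-$2$ supersingular formal $A_\gerp$-module, which relies on the Dieudonn\'e or $\tau$-sheaf description of formal $A$-modules over an algebraically closed field of $A$-characteristic $\gerp$.
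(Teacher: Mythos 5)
The converse direction of your proof has a genuine gap. After intersecting the Hodge--Tate--Taguchi line with the lattice you obtain a saturated rank-one summand $N\subset T_\gerp(\varphi)$, but from that point on your argument never uses that $N$ is the HTT line: it only uses that \emph{some} $F_\gerp$-rational line exists. Since $T_\gerp(\varphi)\cong A_\gerp^2$ for every rank $2$ Drinfeld module over $K$, good reduction or not, such summands always exist; so if your argument were correct it would prove that every rank $2$ Drinfeld module with good reduction is $\gerp$-ordinary, contradicting the existence of supersingular reduction. The step that fails is the claim that the scheme-theoretic closure of the generic height-one submodule inside $\widetilde\varphi[\gerp^\infty]$ is a $\gerp$-divisible submodule over $\mc O_K$. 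Each level-$n$ closure $\mc H_n$ is indeed finite and flat (torsion-free over a valuation ring), but the system $(\mc H_n)_n$ need not be $\gerp$-divisible: the inclusion $\mc H_n\subset \mc H_{n+1}[\gerp^n]$ can be strict, and in the supersingular case it must fail, precisely because the supersingular special fibre has finite flat subgroup schemes of every order (kernels of powers of Frobenius) yet, by the very simplicity you invoke, no height-one $\gerp$-divisible subgroup. The paper's proof avoids this by actually exploiting the hypothesis: by Theorem \ref{previousthm} the rational line lies in the kernel of $\mr{HTT}_{\varphi}\colon T_\gerp(\varphi)\to\mr{Lie}(\varphi^D)^{\vee}$, and (following Scholze, Remark III.3.7) this kernel equals $T_\gerp(\varphi^{\mr{CH}})$, because if $\varphi^{\mr{CH}}=0$ then $\varphi^D$ is formal and $x\mapsto \mr{Lie}(x^D)$ is injective; a rational line inside the kernel then forces $\varphi^{\mr{CH}}\neq 0$, i.e.\ ordinarity. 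Some use of the fact that the line is killed by $\mr{HTT}_{\varphi}$ is indispensable for the converse.

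Your forward direction reaches the correct conclusion (the HTT line is $T_\gerp$ of the connected, CH-isomorphic part, hence rational, which is exactly what the paper says), but the connected/\'etale bookkeeping is swapped: the kernel of the dual map $T_\gerp(\varphi^D)\to\omega_{\varphi}$ is the \emph{connected} part $T_\gerp((\varphi^{\mr{et}})^D)=T_\gerp((\varphi^D)^{\mr{conn}})$ --- a homomorphism $\varphi\to\mr{CH}$ killing $\varphi^{\mr{conn}}$ factors through the \'etale quotient, whose module of differentials vanishes, so it pulls $\textup{d}z$ back to $0$ --- and it is the orthogonal of this connected part that equals $T_\gerp(\mu)$. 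Your justification ``an \'etale morphism to CH pulls $\textup{d}z$ back to zero'' is backwards, since an \'etale morphism induces an isomorphism on cotangent spaces. These slips cancel and are easily repaired, unlike the gap in the converse.
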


\begin{rmk} This is the exact analogue of what happens for classical (perfectoid) modular curves mapping the supersingular locus to the Drinfeld upper-half plane via the Hodge-Tate map.
\end{rmk}

\begin{proof}
If $({\mc E},{\varphi})$ is ordinary, then we can identify $\mr{Lie}(\varphi)$ with the Lie algebra of the canonical subgroup (which is the Carlitz--Hayes module), and the its $\pi$-torsion is an $A_{\gerp}$-line in $T_\gerp(\varphi)$. 
Conversely, let $({\mc E},{\varphi})$ be a Drinfeld module over $K$ and suppose that there is a trivialisation $\eta$ such that $ {\Pi_{\mr{HTT}}}(\varphi, \eta) \in \mb P^{1}(F_{\gerp})$. Using matrices in $\mr{GL}_2(F_{\gerp})$, we can suppose that the rational line lies in  $T_\gerp(\varphi)$. We proceed as in \cite[Remark III.3.7]{ScholzeTorsion} and we first show that the kernel of 
\[
\mr{HTT}_{\varphi} : T_\gerp(\varphi) \rightarrow \mr{Lie}(\varphi^{D})^{\vee}
\]
is given by $T_\gerp(\varphi^{\mr{CH}})$, where $\varphi^{\mr{CH}}$ is the sub-module of $\varphi$ isomorphic to a power of $\mr{CH}$ (so either $0$ or $\mr{CH}$, by dimension count). This means that we have to show that if $\varphi^{\mr{CH}} =0$, then $\mr{HTT}$ is injective.
Suppose that  $\varphi^{\mr{CH}} =0$, then $\varphi^{D}$ is a formal group, as the \'etale $\pi$-divisible group $F_{\gerp}/A_{\gerp}$ is $\mr{CH}^{D}$.
Given $x \in T_\gerp(\varphi)$, denote again
$x^D : \varphi^{D} \rightarrow \mr{CH}$, and \[\mr{HTT}_{\varphi}(x)= \mr{Lie}(x^D) \in \mr{Lie}(\varphi^{D}).\]
As $\varphi^{D}$ is a formal Drinfeld module, if $\mr{Lie}(x^D)=0$, then $x^D=0$, hence $x=0$.
So if $\mr{Lie}(\varphi) \subset T_\gerp(\varphi)$ then $\mr{Lie}(\varphi) $ is in the kernel of $\mr{HTT}_{\varphi}$ and hence $\varphi^{\mr{CH}} \neq 0$, proving that $\varphi$ is ordinary.
\end{proof}

\begin{rmk}
As in the classical Hodge--Tate decomposition of an abelian variety, the decomposition of $T_\gerp(\varphi)\otimes_{A_{\gerp}} \overline{K}$ is not the one induced by the Hodge decomposition of $\mr{H}^1_{\mr{dR}}(\varphi)$ (see \cite[(3.11)]{GekelerdeRham} or \cite[Lemma 2.21]{HattoriHTT}) and the comparison isomorphism of \cite[Theorem 4.12]{hartl_kim}.
\end{rmk}

\subsection{The perfectoid tower}
For each adic space $h:\mc Y \rightarrow \mc X$ we define $\mc Y(v) := h^{-1}(\mc X(v))$. Recall that the perfection of a ring $R$ in characteristic $p$ is $R^{\mr{perf}}:=\varinjlim R$, where the transition morphisms are given by Frobenius. 
For an adic space $\mc Y = \mr{Spa}(R[1/\pi],R)$, we denote by $\mc Y^{\mr{perf}}$ the adic space associated with $ \mr{Spa}(R^{\mr{perf}}[1/\pi],R^{\mr{perf}})$ (as in \cite[Definition 5.1.1]{KedlayaLiuII}).
Our first goal is to prove the following theorem:
\begin{thm}
Let $v \leq \frac{1}{q^d+1}$. Then there is a perfectoid space 
\[
\mc X_{\infty}(v) \sim \varprojlim_m \mc X(\pi^n)(v)^{\mr{perf}},
\]
where $\sim$ means that the direct limit of functions on the right hand side are dense in the left hand side, and we have an isomorphism of topological spaces 
\[
\vert \mc X_{\infty}(v) \vert \sim \varprojlim_m \vert \mc X(\pi^n)(v) \vert.
\]
\end{thm}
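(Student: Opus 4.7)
The plan is to adapt Scholze's perfectoid anti-canonical tower to Drinfeld modular curves. The bound $v \leq \frac{1}{q^d+1}$ ensures $q^{-dm}v < \frac{1}{2q^{d(m-1)}}$ for every $m \geq 1$, so Theorem \ref{thm:sousgroupe} supplies a canonical subgroup $C_m$ on the universal Drinfeld module over $\mc X(q^{-dm}v)$, and iterating Lemma \ref{lem:lubinkatz} gives $\ha(\varphi/C_m) = q^{dm}\ha(\varphi)$. Consequently, the assignment $\varphi \mapsto (\varphi/C_m, \varphi[\pi^m]/C_m)$ defines a morphism
\[
i_m : \mc X(q^{-dm}v) \longrightarrow \mc X_0(\pi^m)(v)
\]
with image the anti-canonical locus $\mc X_0(\pi^m)(v)^{\mr{a}}$; since the dual isogeny recovers $\varphi$ from its image, this is an open immersion.

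With this in hand, I would next consider the tower
\[
\cdots \to \mc X(q^{-d(m+1)}v)^{\mr{perf}} \to \mc X(q^{-dm}v)^{\mr{perf}} \to \cdots
\]
whose transition morphisms correspond to quotienting by one further canonical subgroup of level one. In characteristic $p$, on the ordinary locus $C_1$ coincides with the kernel of the relative $q^d$-Frobenius, and on a strict neighbourhood it agrees with Frobenius up to an error controlled by $\ha$. Passing to perfections, this transition map becomes, affinoid-locally on $\mr{Spa}(R[1/\pi],R)$, the Frobenius on $R^{\mr{perf}}$; hence the completed colimit of the associated uniform Tate Huber pairs is perfect, $\pi$-adically complete, and has $\pi$ as a pseudo-uniformiser. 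Scholze's criterion for perfectoidness in characteristic $p$ then identifies this limit with a perfectoid algebra, and gluing affinoid by affinoid produces an intermediate perfectoid space $\mc X_{0,\infty}(v) \sim \varprojlim_m \mc X_0(\pi^m)(v)^{\mr{a},\mr{perf}}$.

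To promote from $\Gamma_0(\pi^m)$-level to full $\Gamma(\pi^m)$-level, I would invoke that for $\mr{GL}(2)$ the boundary of $\mc X$ consists of Tate--Drinfeld modules, which are ordinary by \cite[Lemma 4.1]{HattoriKM}; hence $\mc X(\pi^m) \to \mc X_0(\pi^m)$ is finite \'etale everywhere, with Galois group a finite quotient of $A_\gerp^\times$. Since pro-finite-\'etale covers of perfectoid spaces are perfectoid, the inverse limit $\mc X_\infty(v)$ is the sought-after perfectoid space. The topological identification is then automatic: perfection is a homeomorphism on underlying adic spaces in characteristic $p$, and the $\sim$-relation records precisely the corresponding inverse limit of topological spaces. \textbf{The delicate point} will be Step 2: verifying that after perfection the transition isogeny becomes genuinely Frobenius, rather than Frobenius times a controlled unit, requires a careful analysis of how the canonical subgroup deforms within the overconvergent strip $\ha \leq v$, and this is precisely where the uniformity of the hypothesis $v \leq \frac{1}{q^d+1}$ is essential.
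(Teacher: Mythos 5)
Your outline follows the paper's strategy (anticanonical tower via the canonical-subgroup quotient, then full level on top of it), but it stops short of the stated theorem: what your Steps 1--3 produce is only the anticanonical part. The space you build in Step 2 is $\varprojlim_m \mc X_0(\pi^m)(v)_a^{\mr{perf}}$ (your own superscript $a$), and a pro-finite-\'etale cover of it by the full-level tower yields $\varprojlim_m \mc X(\pi^m)(v)_a^{\mr{perf}}$, i.e.\ the anticanonical locus $\mc X_\infty(v)_a$ at infinite level --- not $\mc X_\infty(v)\sim\varprojlim_m \mc X(\pi^m)(v)^{\mr{perf}}$. At every finite level the canonical part of $\mc X_0(\pi^m)(v)$ is missed, and in the limit the compatible canonical towers over ordinary points are genuinely absent from your space, so the two inverse limits differ. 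The paper closes exactly this gap: since at full infinite level one may change the trivialisation of $T_\gerp(\varphi)$, one has $\mc X_\infty(v)=\GL_2(A_\gerp)\,\mc X_\infty(v)_a$ (all bases are conjugate at finite level), and by quasi-compactness finitely many translates suffice, so $\mc X_\infty(v)$ is covered by perfectoid opens. Note the order of operations matters: this translation is only available after passing to full level over the anticanonical locus, so it cannot be dispensed with by working at $\Gamma_0$-level, and your proposal needs this step to reach the asserted statement (including its topological part).

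Two further points. First, your flagged ``delicate point'' is misstated: the transition maps of the anticanonical tower are finite of degree $q^d$ and generically separable on the rigid generic fibre (over a point $\psi$ the fibre consists of the $q^d$ quotients of $\psi$ by its anticanonical $\pi$-torsion subgroups, and $\psi[\pi]$ is \'etale there since the $A$-characteristic is generic), so they do not ``become the Frobenius on $R^{\mr{perf}}$'' after perfection --- perfection only kills inseparability. What the paper proves, and what is actually used, is the congruence with the relative Frobenius modulo $\pi$ on the integral models (the canonical subgroup lifts the kernel of Frobenius), which identifies $R_\infty/\pi$ with $(R/\pi)^{\mr{perf}}$ and makes $\widehat{R^{\mr{perf}}_\infty}$ a perfect complete ring, hence perfectoid, with the density and topological statements holding by construction. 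Second, ``the boundary consists of ordinary Tate--Drinfeld modules, hence $\mc X(\pi^m)\to\mc X_0(\pi^m)$ is finite \'etale everywhere'' is not a proof: ordinariness of the boundary object does not control ramification along the boundary divisor (for classical modular curves the Tate curve is ordinary, yet the full-level cover is ramified at the cusps through $q^{1/p^m}$). The paper establishes the \'etale lemma by the explicit Tate--Drinfeld computation showing $\mr{TD}[\pi^m]/\mr{CH}[\pi^m]$ is \'etale, so that the $\Gamma_1$-datum is unramified, and treats the passage from $\Gamma_1(\pi^m)$ to $\Gamma(\pi^m)$ as in Scholze's Lemma III.2.35 before invoking almost purity; also the covering group of $\mc X(\pi^m)\to\mc X_0(\pi^m)$ is a Borel modulo $\pi^m$, not a finite quotient of $A_\gerp^\times$.
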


The first step to its proof is as follows:

\begin{thm}
Let $v \leq \frac{1}{q^d+1}$. There is a perfectoid affinoid space 
\[
\mc X_{0,\infty}(v)_a \sim \varprojlim_m \mc X_{0}(\pi^m)(v)^{\mr{perf}}_a,
\]
where $\mc X_{0}(\pi^m)(v)_a$ denotes the anticanonical neighbourhood in $\mc X_{0}(\pi^m)$.
\end{thm}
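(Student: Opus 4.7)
The plan is to follow Scholze's construction of the anticanonical tower, taking advantage of the characteristic-$p$ simplification that ``perfectoid'' is equivalent to ``perfect complete Tate''. The crux is to construct an isomorphism of adic spaces
\[
\alpha_m\colon \mc X(v q^{-dm}) \xrightarrow{\;\sim\;} \mc X_0(\pi^m)(v)_a, \qquad \varphi \longmapsto \bigl(\varphi/C_m,\;\varphi[\pi^m]/C_m\bigr),
\]
where $C_m=C_{\varphi,m}$ is the canonical subgroup of level $m$. The bound $v\leq 1/(q^d+1)$ is tailored to iterate Lemma \ref{lem:lubinkatz} up to level $m$, giving $\ha(\varphi/C_m)=q^{dm}\ha(\varphi)\leq v$, so the target lands in $\mc X(v)$; the subgroup $\varphi[\pi^m]/C_m$ meets the canonical subgroup of $\varphi/C_m$ trivially and is therefore anticanonical. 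The inverse is $(\psi,H)\mapsto \psi/H$: for $H$ anticanonical of level $m$, the kernel $\psi[\pi^m]/H$ of the dual isogeny $\psi/H\to\psi$ plays the role of the canonical subgroup at level $m$ of $\psi/H$, and one computes $\ha(\psi/H)=q^{-dm}\ha(\psi)\leq vq^{-dm}$. Theorem \ref{thm:sousgroupe} guarantees both constructions work globally in families on the formal model.

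Once $\mc X_0(\pi^m)(v)_a\cong \mc X(vq^{-dm})$ is established, the rest is a general principle in characteristic $p$. The perfection of any complete Tate $F_\gerp$-algebra is perfect complete Tate, hence perfectoid, so each $\mc X_0(\pi^m)(v)^{\mr{perf}}_a$ is already a perfectoid affinoid. Transporting the natural transition maps through the $\alpha_m$ yields a cofiltered system of perfectoid Tate algebras $R_m$, and the $\pi$-adic completion $R_\infty$ of $\varinjlim_m R_m$ remains perfect and complete, hence a perfectoid Tate algebra. Setting $\mc X_{0,\infty}(v)_a:=\mr{Spa}(R_\infty[1/\pi],R_\infty)$ produces the sought perfectoid affinoid; the density of functions at finite levels and the homeomorphism of underlying topological spaces (the content of $\sim$) follow from the construction of $R_\infty$ as a completed colimit.

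The main obstacle will be the first step: upgrading the moduli-theoretic description of $\alpha_m$ from a bijection on points to an honest isomorphism of adic spaces. This requires carrying out the quotient-by-canonical-subgroup construction in families on the formal model and checking that it intertwines the anticanonical level structure on both sides; here the families-level results on canonical subgroups of \cite{NicoleRosso} are essential. Fortunately the boundary causes no trouble: in rank two the cusps correspond to Tate--Drinfeld modules, which are ordinary (see \cite[Lemma 4.1]{HattoriKM}), so the canonical subgroup extends unambiguously across the compactification, exactly as in the classical elliptic modular case.
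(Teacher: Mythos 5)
Your construction is correct in its geometric input and agrees with the paper up to the identification of the anticanonical locus: the paper uses the very same map $\varphi\mapsto(\varphi/C_{\varphi,m},\varphi[\pi^m]/C_{\varphi,m})$ on $\mc X(q^{-dm}v)$, with Lemma \ref{lem:lubinkatz} giving the bound on the Hasse invariant of the target. But it diverges from your route in two ways. First, the paper does not prove that this map is an isomorphism onto a pre-defined anticanonical locus; it simply \emph{defines} $\mc X_0(\pi^m)(v)_a$ to be the image of $h$ and takes as its integral model the formal model $\mf X(q^{-dm}v)$ of the source, thereby sidestepping the inverse construction $(\psi,H)\mapsto\psi/H$ (which, as you note, needs a converse to Lemma \ref{lem:lubinkatz}, i.e.\ a Newton-polygon argument that quotienting by an anticanonical subgroup divides $\ha$ by $q^{dm}$ and that $\psi[\pi^m]/H$ is the canonical subgroup of $\psi/H$) and the family-level upgrade you flag as the main obstacle. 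Second, and more substantially, the paper's mechanism for perfectoidness is different: it records that, since the canonical subgroup lifts the kernel of Frobenius mod $\pi$, the transition maps $\mc X_0(\pi^{m+1})(v)_a\to\mc X_0(\pi^m)(v)_a$ are purely inseparable of degree $q^d$ and coincide with Frobenius modulo $\pi$, so the tower of formal models reduces mod $\pi$ to the Frobenius tower of $R/\pi$, and $\mc X_{0,\infty}(v)_a=\mr{Spa}(\widehat{R^{\mr{perf}}_\infty}[1/\pi],\widehat{R^{\mr{perf}}_\infty})$ is then exhibited with $\widehat{R^{\mr{perf}}_\infty}$ tightly linked to $(R/\pi)^{\mr{perf}}$. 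Your route instead invokes the characteristic-$p$ softness that a perfect complete Tate ring is automatically perfectoid, so once perfections appear at each finite level the completed colimit is perfectoid for essentially formal reasons, and the $\sim$ relation (density of finite-level functions plus the homeomorphism of topological spaces, which perfection and completion do not alter) follows. This does prove the literal statement, since the theorem is phrased with $\mc X_0(\pi^m)(v)^{\mr{perf}}_a$ and the paper's weak notion of $\sim$; what it buys is brevity, but at the cost of the structural content of the anticanonical tower --- the Frobenius description of the transition maps and the resulting explicit integral model --- which is precisely what mirrors Scholze's mixed-characteristic argument, would be needed in any setting where one cannot simply take perfections, and is the concrete handle the paper leans on afterwards (e.g.\ the map to $\mc X(v)^{\mr{perf}}$ and the descent of affinoids from finite level). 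So: acceptable as a proof of this theorem, but do make the converse canonical-subgroup computation explicit if you keep the isomorphism formulation, or else define the anticanonical locus as the image, as the paper does.
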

\begin{proof}
Let \[ h: \mc X(q^{-dm}v) \rightarrow \mc X_0(\pi^{m}).
\]
\noindent
be the map that, on points, sends a Drinfeld module $\varphi$ to $(\varphi/C_{\varphi,m}, \varphi[\pi^m]/C_{\varphi, m})$, where  $C_{\varphi,m}$ is the canonical subgroup of level $m$. By Lemma \ref{lem:lubinkatz} the image of $h$ is contained in  $\mc X_0(\pi^{m}) (v)$. We denote the image of $h$ by $\mc X_{0}(\pi^m)(v)_a$ where `a' stands for anticanonical (as it parametrises Drinfeld modules with a subgroup of the $\pi^m$-torsion that does not intersect the canonical subgroup). We take as an integral model of  $\mc X_{0}(\pi^m)(v)_a$ the integral model $\mf X(q^{-dm}v) $ of $\mc X(q^{-dm}v) $.   As the canonical subgroup is a lift of the kernel of the Frobenius modulo $\pi$ (see the proof \cite[Th\'eor\`eme 4.4]{NicoleRosso}), the projection
\[
\mc X_{0}(\pi^{m+1})(v)_a \rightarrow \mc X_{0}(\pi^m)(v)_a
\]
 coincide with the Frobenius map relative to $A_{\gerp}/\pi$, as the image of  $C_{m+1}$ in $\varphi/C_{\varphi,m}$ is the canonical subgroup). This is a map of degree $q^d$, purely inseparable modulo $\pi$.

If  $\mf X(v)=\mr{Spf}(R)$ and $\mf X(q^{-dm}v) =\mr{Spm}(R_m)$ we have a diagram  
\[
\begin{array}{ccccccc}
R & \rightarrow  & R_1 &              \rightarrow & .... & \rightarrow & R_\infty = \varinjlim R_m \\
\downarrow &    & \downarrow & & & &                                             \downarrow \\
R/\pi & \rightarrow & R/\pi &           \rightarrow & .... & \rightarrow & (R/\pi)^{\mr{perf}}
\end{array}
\]
By the universal properties of perfection, we get a unique map from $R^{\mr{perf}}_\infty$ to $ (R/\pi)^{\mr{perf}}$ which commutes with the last map in the diagram.
We have then 
\[
\mc X_{0,\infty}(v)_a = \mr{Spa}(\widehat{R^{\mr{perf}}_\infty} [1/\pi],\widehat{R^{\mr{perf}}_\infty})
\]
and an isomorphism of topological spaces 
\[
\vert \mc X_{0,\infty}(v)_a \vert \cong \varprojlim_m\vert  \mc X_{0}(\pi^m)(v)_a \vert.
\]
\end{proof}

\begin{rmk}
As perfection commutes with direct limits, it follows that there is a natural map of adic spaces from  $\mc X_{0,\infty}(v)_a$ to $\mc X(v)^{\mr{perf}}$, since perfection does not change the underlying topological space.
\end{rmk}
\begin{lem}
The map 
\[
\mc X(\pi^m)_a \rightarrow \mc X_0(\pi^m)_a
\]
is \'etale.
\end{lem}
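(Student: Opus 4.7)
The plan is to reduce the claim to an étaleness statement about trivializing the finite étale $\pi^m$-torsion of a Drinfeld module on an analytic adic space.

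First, I would leverage the isomorphism $\mc X_0(\pi^m)(v)_a \cong \mc X(q^{-dm}v)$ implicit in the construction of $h$: an inverse sends $(\psi, D) \mapsto \psi/D$ via the dual isogeny, so the universal object on $\mc X_0(\pi^m)(v)_a$ corresponds to $\psi = \varphi/C_{\varphi, m}$ with $\varphi$ the universal Drinfeld module on $\mc X(q^{-dm}v)$. Tracking the anticanonical full level structure $(e_1, e_2)$ on $\psi[\pi^m]$ through the dual isogeny $\hat\lambda: \psi \to \varphi$ (which satisfies $\hat\lambda \circ \lambda = [\pi^m]_\varphi$) identifies $\mc X(\pi^m)_a$ with the moduli space over $\mc X(q^{-dm}v)$ of triples $(\varphi, f_1, f_2)$, where $f_1$ is a generator of the canonical subgroup $C_{\varphi, m}$ and $f_2 \in \varphi[\pi^m]$ is a lift of a generator of the étale quotient $\varphi[\pi^m]/C_{\varphi, m}$.

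Second, the base $\mc X(q^{-dm}v)$ is an analytic adic space over $\mr{Spa}(F_\gerp, A_\gerp)$, so $\pi$ is a unit in its structure sheaf. Consequently $\varphi[\pi^m]$ is finite étale over the base: the polynomial $\varphi_{\pi^m}(x)$ has derivative $\pi^m$, a unit, hence is separable. Because Tate--Drinfeld modules are ordinary, the canonical subgroup $C_{\varphi, m}$ extends across the cusps as a finite flat subgroup scheme of $\varphi[\pi^m]$; as a flat subgroup of a finite étale group scheme, it is itself finite étale, as is the quotient $\varphi[\pi^m]/C_{\varphi, m}$.

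Third, the map $\mc X(\pi^m)_a \to \mc X_0(\pi^m)_a$ then factors, via the isomorphism above, as two finite étale torsors over $\mc X(q^{-dm}v)$: the choice of a generator $f_1 \in C_{\varphi, m}$ is a torsor under the étale constant scheme $(A_\gerp/\pi^m)^\times$, while the choice of a lift $f_2$ of a generator of $\varphi[\pi^m]/C_{\varphi, m}$ is a torsor under an extension of $(A_\gerp/\pi^m)^\times$ by the finite étale group scheme $C_{\varphi, m}$. The composite is finite étale, which is the statement of the lemma.

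The main obstacle is the moduli-theoretic bookkeeping in the first step, where one must verify via the dual isogeny $\hat\lambda$ that the anticanonical decomposition of $\psi[\pi^m]$ corresponds precisely to the canonical filtration of $\varphi[\pi^m]$; once this correspondence is established, étaleness follows automatically from the finite étale nature of the torsion over the analytic adic base, thus bypassing the cuspidal ramification which obstructs étaleness of $\mc X(\pi^m) \to \mc X_0(\pi^m)$ in general.
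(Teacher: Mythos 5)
There is a genuine gap, and it sits exactly where the lemma has its content. As the paper notes, étaleness away from the boundary is standard; the whole point is the behaviour at the cusps of the \emph{compactified} curves. Your key claim --- that $\varphi[\pi^m]$ is finite étale over the base because $\varphi_{\pi^m}$ has derivative $\pi^m$, a unit --- fails there: over the cuspidal neighbourhoods the universal object is the Tate--Drinfeld module, a \emph{generalized} Drinfeld module whose rank degenerates from $2$ to $1$ at the cusp, so the leading coefficient of $\varphi_{\pi^m}$ vanishes at the cusp and $\ker \varphi_{\pi^m}$ is only quasi-finite, not finite, over any neighbourhood of the cusp. Consequently the deductions ``$C_{\varphi,m}$ is a flat subgroup of a finite étale group scheme, hence finite étale, as is the quotient'' are unfounded precisely at the cusps, and the ``automatic'' étaleness of the two torsors in your third step is not available there. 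The paper's proof is the missing ingredient: an explicit computation with the Tate--Drinfeld module (van der Heiden's Lemma 6.5, Hattori--Kojima-type statements) showing that $\mr{TD}[\pi]/\mr{CH}[\pi]$ is étale and that choosing a generator is unramified at the cusp, followed by Scholze's Lemma III.2.35 to pass to full level.

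There is also a material error in your bookkeeping, not just a detail: a lift $f_2 \in \varphi[\pi^m]$ of a generator of $\varphi[\pi^m]/C_{\varphi,m}$ is a \emph{canonical-type} full level structure on $\varphi$, and the corresponding cover is ramified at the cusps --- it is the function-field analogue of adjoining $q^{1/p^m}$ to the Tate parameter, exactly the phenomenon that obstructs étaleness of $\mc X(\pi^m) \to \mc X_0(\pi^m)$ in general. The correct identification is that the extra $\Gamma(\pi^m)$-datum over $(\psi, D) = (\varphi/C_{\varphi,m}, \varphi[\pi^m]/C_{\varphi,m})$ consists of a generator of $D$ and an element of $\psi[\pi^m]$ lifting a generator of $\psi[\pi^m]/D \cong C_{\varphi,m}$ (via the dual isogeny); at the cusp the Tate parameter of $\psi$ is already a $q^{dm}$-th power of the base parameter, which is why $D$ is generated by a point rational over the cuspidal base and these data are unramified. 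Establishing that is precisely the cuspidal Tate--Drinfeld computation your proposal bypasses, so the argument as written both identifies the wrong moduli problem and assumes at the boundary what needs to be proved.
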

\begin{proof}
We just have to check this at the cusps, as the result is known on the open part. From level $\Gamma_0(\pi^n)$ to $\Gamma_1(\pi^n)$ it is a direct calculation on the Tate--Drinfeld module of rank $2$ \cite[Lemma 6.5]{DrinfeldTatevdHein}, that we denote by $\mr{TD}$. This is a rank two Drinfeld module over $A_{\gerp} \llbracket x \rrbracket$ which reduces modulo $x$ to the Carlitz--Hayes Drinfeld module. Note that the variable $x$, contrary to the case of the Tate curve, is not necessarily the uniformiser at the cusp of the Drinfeld modular curve. Anyway, we have that $\mr{TD}[\pi]/\mr{CH}[\pi]$ is generated by an element of positive $x$-adic valuation and it is \'etale as $\pi$-divisible module (cf. the explicit calculation of  \cite[Lemma 6.5]{DrinfeldTatevdHein} or \cite[Lemma 4.4]{HattoriKM}).
Hence the passage from $\Gamma_0(\pi^n)$ to $\Gamma_1(\pi^n)$ is done choosing a generator of this \'etale group, which is unramified. To pass to level $\Gamma(\pi^n)$ we proceed as in \cite[Lemma III.2.35]{ScholzeTorsion}.
\end{proof}

\begin{rmk}
For higher rank $r$, the transition maps from level $\Gamma_1$ to $\Gamma_0$ will not necessarily be \'etale on the boundary, and Scholze uses Tate traces in \cite[III.2.4]{ScholzeTorsion} to deal with this issue. Note that normalized Tate traces are not available in positive characteristic, so it is far from clear to us how to adapt Scholze's strategy for higher ranks.
\end{rmk}

Using this lemma, \cite[Lemma 3.4 (xi)]{BhattScholze}, and Scholze's almost purity result \cite[Theorem 7.9 (iii)]{ScholzeIHES}, we obtain

\begin{thm}
Let $v \leq \frac{1}{q^d+1}$. Then there is an affinoid perfectoid space 
\[
\mc X_{\infty}(v)_a \sim \varprojlim_m \mc X(\pi^m)(v)^{\mr{perf}}_a.
\]
\end{thm}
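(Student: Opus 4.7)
The plan is to pass from the already-constructed perfectoid anticanonical tower at level $\Gamma_0(\pi^m)$, namely $\mc X_{0,\infty}(v)_a$, to full level $\Gamma(\pi^m)$ by base change along the finite étale covers $\mc X(\pi^m)(v)_a \to \mc X_0(\pi^m)(v)_a$ produced in the preceding lemma, and then to take an inverse limit in the perfectoid category. All the serious content has been loaded into the preceding lemma; what remains is a formal application of almost purity and a standard inverse-limit result.

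First I would fix $m$ and form the fiber product
\[
Y_m := \mc X(\pi^m)(v)^{\mr{perf}}_a \times_{\mc X_0(\pi^m)(v)^{\mr{perf}}_a} \mc X_{0,\infty}(v)_a.
\]
Since finiteness and étaleness are preserved both by passage to the perfection and by base change, the map $Y_m \to \mc X_{0,\infty}(v)_a$ is finite étale. The base is affinoid perfectoid, so Scholze's almost purity theorem \cite[Theorem 7.9 (iii)]{ScholzeIHES} guarantees that $Y_m$ is affinoid perfectoid as well. These fiber products fit into a compatible inverse system, with transition maps $Y_{m+1} \to Y_m$ that are again finite étale over $\mc X_{0,\infty}(v)_a$.

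Second, I would take the inverse limit of the system $\{Y_m\}$ using \cite[Lemma 3.4 (xi)]{BhattScholze}, which produces an affinoid perfectoid space
\[
\mc X_{\infty}(v)_a \sim \varprojlim_m Y_m,
\]
with the required density of functions built into the $\sim$ relation. A cofinality argument identifies this limit with $\varprojlim_m \mc X(\pi^m)(v)^{\mr{perf}}_a$: the perfectoid $\mc X_{0,\infty}(v)_a$ sits over each $\mc X_0(\pi^m)(v)^{\mr{perf}}_a$, so enlarging the base does not change the inverse limit of functions, and compatibility with the topological spaces follows by the usual argument (the underlying topological space of a perfectoid space is unchanged by perfection, and tilde-limits realize $\varprojlim |Y_m|$).

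The main obstacle has already been settled by the previous lemma: one needs étaleness of $\mc X(\pi^m)_a \to \mc X_0(\pi^m)_a$ globally, including at the cusps, and this was established by an explicit computation on the rank-two Tate--Drinfeld module showing that $\mr{TD}[\pi]/\mr{CH}[\pi]$ is étale as a $\pi$-divisible module. Without this étaleness at the boundary the base change above would fail to produce a finite étale cover of the perfectoid space $\mc X_{0,\infty}(v)_a$, and almost purity would not apply. Once this input is in hand the argument is mechanical, and note that it would break for $r>2$, where additional input (analogous to Scholze's Tate traces) would be required, as is observed in the remark preceding the lemma.
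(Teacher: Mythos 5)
Your proposal is correct and is essentially the paper's own argument: the paper deduces this theorem in one line from exactly the ingredients you use, namely the \'etaleness (including at the cusps) of $\mc X(\pi^m)_a \to \mc X_0(\pi^m)_a$, base change to the affinoid perfectoid $\mc X_{0,\infty}(v)_a$ followed by almost purity \cite[Theorem 7.9 (iii)]{ScholzeIHES}, and the tilde-limit statement \cite[Lemma 3.4 (xi)]{BhattScholze}, with the concluding cofinality/interleaving identification being the standard one. Your write-up simply makes that derivation explicit, so there is nothing to change.
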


We now use the Hodge--Tate--Taguchi map to extend the construction to the whole Drinfeld modular curve. We consider the topological space 
\[
\vert \mc X_{\infty} \vert := \varprojlim_m \vert \mc X(\pi^m) \vert.
\]
It parametrises (Tate--)Drinfeld modules $\varphi$ with an isomorphism $A_{\gerp}^2 \rightarrow T_{\gerp} (\varphi)$. There is a natural action of $\mr{GL}_2(A_{\gerp})$ by pre-composition. We can extend this action to an action of $\mr{GL}_2(F_{\gerp})$: given a matrix $ \gamma \in \mr{GL}_2(F_{\gerp})$ with determinant in $A_{\gerp}$  and a Drinfeld module $\varphi$ with a trivialisation $\eta$, we can define a submodule $L:=\eta \circ \gamma (A_{\gerp}^2) \subset T_{\gerp}( \varphi)$, and this corresponds to a subgroup $L_{\mr{coker}}$ of $\varphi[\gerp^m]$, then $L$ is the Tate module of $\varphi/L_{\mr{coker}}$ and $\eta  \circ \gamma $ defines an isomorphism of $L$ with  $A_{\gerp}^2$. We proceed similarly if the determinant has negative valuation.

Let $[x:y]$ be a point in $\mb P^1$, we let $\mr{GL}_2(F_{\gerp})$ act on $\mb P^1$ via 

\begin{align}\label{eq:actionP1}
\left( 
\begin{array}{cc}
a & b \\
c & d 
\end{array}
\right).[x:y]=[dx-by:-cx+ay].
\end{align}

This is simply $\mr{det}(\gamma)\gamma^{-1}$ applied to the vector $ \left( 
\begin{array}{c}
x  \\
y 
\end{array}
\right)$.
If $z=-\frac{y}{x}$ we have the neat formula
\[
\gamma (z) = \frac{az+c}{bz+d}.
\]
\begin{lem}
We have a continuous and $\mr{GL}_2(F_{\gerp})$-equivariant map
\[
\vert {\Pi_{\mr{HTT}}} \vert : \vert \mc X_{\infty} \vert \rightarrow \vert \mb P^1 \vert.
\]
\end{lem}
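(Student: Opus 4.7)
The pointwise definition of $\Pi_{\mr{HTT}}$ has already been given in Definition \ref{def:HTT} via Theorem \ref{previousthm}, so the underlying set-map $|\Pi_{\mr{HTT}}|$ is in place; what remains is continuity and $\mr{GL}_2(F_\gerp)$-equivariance. The plan is to promote the pointwise construction to a morphism of adic spaces on the perfectoid locus $\mc X_\infty(v)$, then to transport it to $\mr{GL}_2(F_\gerp)$-translates, and finally to use that these translates cover $|\mc X_\infty|$.

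For continuity, over $\mc X_\infty(v)$ we have a universal (generalized) Drinfeld module $\varphi^{\mr{univ}}$ together with a universal trivialization $\eta^{\mr{univ}}:T_\gerp(\varphi^{\mr{univ}}) \cong A_\gerp^2$. Theorem \ref{thm:sousgroupe} gives, at each finite level $m$, a Hodge--Tate--Taguchi map that is surjective modulo $\pi^{wm}$ and whose kernel detects the line $\mr{Lie}(\varphi^{\mr{univ}})$ modulo $\pi^{wm}$. Passing to the inverse limit in the perfectoid tower converts this sequence of approximate decompositions into an honest rank-one sub-$\mc O^+$-line of $T_\gerp(\varphi^{\mr{univ}}) \otimes_{A_\gerp} \mc O^+$; inverting $\pi$ and applying $\eta^{\mr{univ}}$ yields a line sub-bundle of $\mc O^2_{\mc X_\infty(v)}$, classified by a morphism of adic spaces $\mc X_\infty(v) \to \mb P^1$ whose underlying topological map coincides with the restriction of $|\Pi_{\mr{HTT}}|$. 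For each $\gamma \in \mr{GL}_2(F_\gerp)$, the translate $\gamma \cdot \mc X_\infty(v) \subset \mc X_\infty$ inherits a similar morphism by postcomposition with the $\mb P^1$-action $\det(\gamma)\gamma^{-1}$. Since such translates cover $|\mc X_\infty|$ (which will be verified in the next step of the paper by examining the image of $\mc X_\infty(v)$ in $|\mb P^1|$ and exploiting the transitivity of $\mr{GL}_2(F_\gerp)$ on $|\mb P^1|$), the topological map $|\Pi_{\mr{HTT}}|$ is continuous.

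Equivariance is essentially tautological from the intrinsic nature of $\mr{Lie}(\varphi_{\overline{K}}) \subset T_\gerp(\varphi) \otimes \overline{K}$. For $\gamma \in \mr{GL}_2(A_\gerp)$, the replacement $\eta \mapsto \eta \circ \gamma$ leaves this line unchanged but its coordinates in $\overline{K}^2$ transform by $\gamma^{-1}$, which agrees with \eqref{eq:actionP1} once the determinant normalization is taken into account. For a general $\gamma \in \mr{GL}_2(F_\gerp)$, the action passes through the isogenous $\varphi / L_{\mr{coker}}$; the Lie algebra is preserved under the induced map on rational Tate modules, so equivariance again reduces to \eqref{eq:actionP1}.

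The main obstacle is the passage at the heart of Step 2, namely from the integrally-defined, only almost-surjective Hodge--Tate--Taguchi map of Theorem \ref{thm:sousgroupe} to an honest line sub-bundle at the perfectoid level. This is precisely where the perfectoid structure of $\mc X_\infty(v)$ must be invoked to control the error term $\pi^{wm}$ in the inverse limit, in direct analogy with Scholze's argument for classical modular curves \cite[III.3]{ScholzeTorsion}.
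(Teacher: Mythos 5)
Your treatment of equivariance (tautological from the normalization $\det(\gamma)\gamma^{-1}$ in \eqref{eq:actionP1}, with the isogeny case passing through $\varphi/L_{\mr{coker}}$) and of continuity on $\mc X_\infty(v)$ (the Hodge--Tate--Taguchi map of Theorem \ref{thm:sousgroupe} upgraded to a morphism of adic spaces, so the restriction of $\vert\Pi_{\mr{HTT}}\vert$ there is continuous) matches the paper. The gap is in how you extend continuity to all of $\vert\mc X_\infty\vert$: you justify the covering of $\vert\mc X_\infty\vert$ by $\mr{GL}_2(F_\gerp)$-translates of $\vert\mc X_\infty(v)\vert$ by appealing to ``the next step of the paper'' (the analogues of Lemmas \ref{lem:openU} and \ref{lem:cover}), but those lemmas logically depend on the present one: Lemma \ref{lem:cover} explicitly invokes the $\mr{GL}_2(F_\gerp)$-equivariance of $\vert\Pi_{\mr{HTT}}\vert$, and the quasi-compactness argument of Lemma \ref{lem:openU} needs $\vert\Pi_{\mr{HTT}}\vert$ to be (at least on the quasi-compact set $\vert\mc X_\infty\vert\setminus\vert\mc X_\infty(v)\vert$) continuous, so that its image can be separated from $\mb P^1(F_\gerp)$ --- which is precisely the continuity you are trying to establish. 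As written, the argument is circular; also, $\mr{GL}_2(F_\gerp)$ is not transitive on $\vert\mb P^1\vert$ (it preserves $\mb P^1(F_\gerp)$), so the covering cannot be reduced to ``transitivity'': Lemma \ref{lem:cover} really uses the contracting dynamics of $\mathrm{diag}(1,\pi)$ on $U$.

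The repair is exactly the paper's route, and it needs no period-map input at all: a point of $\vert\mc X_\infty\vert$ lying outside the preimage of $\vert\mc X(v)\vert$ cannot be a boundary point, since Tate--Drinfeld modules are ordinary; and by \cite[Lemme 5.2]{NicoleRosso}, quotienting by a suitable subgroup of the $\gerp^m$-torsion --- which at infinite level is the action of an element of $\mr{GL}_2(F_\gerp)$ on the trivialisation --- divides the Hasse invariant by $q^d$, so after finitely many steps the point lands in $\mc X_\infty(v)$ (this is the argument already used in the proof of Theorem \ref{previousthm}). Hence $\vert\mc X_\infty\vert=\bigcup_{\gamma\in\mr{GL}_2(F_\gerp)}\gamma\vert\mc X_\infty(v)\vert$ is available before Lemmas \ref{lem:openU} and \ref{lem:cover}, and continuity follows as you say, being a local property and the action being by homeomorphisms. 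With that substitution your proposal coincides with the paper's proof; the ``main obstacle'' you flag (promoting the almost-surjective integral maps to an exact line over the perfectoid locus) is what the paper compresses into the statement that the map of Theorem \ref{thm:sousgroupe} is a map of adic spaces.
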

\begin{proof}
Pointwise the map is defined using the Hodge--Tate--Taguchi period map of Definition \ref{def:HTT}. As $\mr{GL}_2(F_{\gerp})$ acts on the trivialisation of $T_{\gerp} (\varphi)$ in the same way as it acts on $\mb P^1$ and the map is equivariant by the very definition of the action on $\mb P^1$. To prove continuity we note that it is continuous on the inverse image of  $\vert \mc X (v) \vert$ by the existence of the Hodge--Tate--Taguchi map of Theorem \ref{thm:sousgroupe}, which is a map of adic spaces. For the points outside $\vert \mc X (v) \vert$, as in the proof of Theorem \ref{previousthm}, we take the quotient by a suitable subgroup  and this has the effect of dividing the Hasse invariant by $q^d$.
\end{proof}

\begin{lem}\label{lem:openU}
For every $0 < v <1$ there is an open neighbourhood $U$ of $\mb P^1(F_{\gerp})$ such that 
\[
\vert {\Pi_{\mr{HTT}}} \vert^{-1}(U) \subset \vert \mc X_{\infty} (v) \vert.
\]
\end{lem}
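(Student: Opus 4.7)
The plan is to reduce to a single base point via $\mr{GL}_2(A_\gerp)$-equivariance and then invoke quasi-compactness. Three observations set up the reduction: (i) the open subspace $|\mc X_\infty(v)|$ is $\mr{GL}_2(A_\gerp)$-stable, since the defining condition $\mr{ha}(\varphi)\leq v$ depends only on the underlying Drinfeld module, not on the trivialisation $\eta$; (ii) $\mb P^1(F_\gerp)=\mb P^1(A_\gerp)$ is a single $\mr{GL}_2(A_\gerp)$-orbit, by the Iwasawa decomposition $\mr{GL}_2(F_\gerp)=\mr{GL}_2(A_\gerp)\cdot B(F_\gerp)$; (iii) $\Pi_{\mr{HTT}}$ is $\mr{GL}_2(A_\gerp)$-equivariant by the preceding lemma. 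Together these show that it is enough to produce, around a single base point $x_0=[1:0]\in\mb P^1(F_\gerp)$, an open $U_0\ni x_0$ with $|\Pi_{\mr{HTT}}|^{-1}(U_0)\subset |\mc X_\infty(v)|$; the desired neighbourhood of the whole orbit is then $U:=\mr{GL}_2(A_\gerp)\cdot U_0$, which is open because $\mr{GL}_2(A_\gerp)$ acts on $|\mb P^1|$ by homeomorphisms.

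To produce such a $U_0$, I would exploit quasi-compactness of the complement $Z:=|\mc X_\infty|\setminus|\mc X_\infty(v)|$. This set is closed in $|\mc X_\infty|$, being the pullback under the continuous projection $|\mc X_\infty|\to|\mc X|$ of the closed complement of the rational open $\mc X(v)\subset\mc X$; and $|\mc X_\infty|=\varprojlim_m |\mc X(\pi^m)|$ is quasi-compact as an inverse limit of quasi-compact spectral spaces along spectral transition maps. By Lemma \ref{lem:P^1(F)}, $|\Pi_{\mr{HTT}}|(Z)\cap\mb P^1(F_\gerp)=\emptyset$, so in particular $x_0\notin|\Pi_{\mr{HTT}}|(Z)$. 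Fix affine coordinates $z=b/a$ around $x_0$ and the cofinal family of quasi-compact open neighbourhoods $U_0^{(n)}:=\{|z|\leq q^{-n}\}$, and set $Z_n:=Z\cap|\Pi_{\mr{HTT}}|^{-1}(U_0^{(n)})$. It suffices to show that some $Z_n$ is empty. If not, passing to the constructible topology on $|\mc X_\infty|$ (in which it becomes profinite compact Hausdorff and $|\Pi_{\mr{HTT}}|$ remains continuous), the $Z_n$ form a descending family of non-empty closed subsets of a compact space and so have non-empty intersection. Any point $\xi\in\bigcap_n Z_n$ maps to $\bigcap_n U_0^{(n)}$, which in the adic $|\mb P^1|$ consists only of $x_0$ and its generalisations, all with residue field contained in $F_\gerp$. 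Lemma \ref{lem:P^1(F)} then forces the Drinfeld module underlying $\xi$ to be ordinary, contradicting $\xi\in Z$.

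The step I expect to be the main obstacle is this last topological argument in adic spaces: one must make precise that a point of $|\mc X_\infty|$ whose $|\Pi_{\mr{HTT}}|$-image specialises only to $x_0$ genuinely corresponds to an ordinary Drinfeld module, even though Lemma \ref{lem:P^1(F)} was stated pointwise over algebraically closed complete fields. I would handle this by restricting attention to the rank-one analytic points in $\bigcap_n U_0^{(n)}$ and using that $|\Pi_{\mr{HTT}}|$ is a spectral map, so specialisations on the two sides match; ordinarity can then be read off at the maximal analytic generalisation, to which Lemma \ref{lem:P^1(F)} directly applies, and propagates to all specialisations by functoriality of the canonical subgroup construction reviewed at the start of the section.
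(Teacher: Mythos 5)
Your overall strategy is the one the paper itself uses: its proof of this lemma just invokes quasi-compactness of $\vert \mc X_{\infty}\vert \setminus \vert \mc X_{\infty}(v)\vert$ together with Lemma \ref{lem:P^1(F)}, deferring to Scholze's Lemma III.3.8. Your preliminary reduction to the single point $x_0=[1:0]$ via $\mr{GL}_2(A_{\gerp})$-equivariance is correct (and harmless), though not needed, since $\mb P^1(F_{\gerp})$ is itself compact and the limit argument can be run for all of it at once. However, two steps of your write-up are not justified as stated. First, your constructible-topology argument presupposes that $\vert \Pi_{\mr{HTT}}\vert$ is spectral (``remains continuous'' for the constructible topologies); at this stage of the paper only continuity of $\vert \Pi_{\mr{HTT}}\vert$ is available (it is upgraded to a map of adic spaces only after Theorem \ref{thm:perfectoid}), and continuity alone does not make the preimages of the quasi-compact opens $U_0^{(n)}=\{\vert z\vert\le q^{-n}\}$ constructible-closed, which is exactly what your finite-intersection argument needs. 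This is repairable without spectrality: replace $U_0^{(n)}$ by its closure, so that $Z\cap\vert\Pi_{\mr{HTT}}\vert^{-1}(\overline{U_0^{(n)}})$ is a descending chain of nonempty closed subsets of the quasi-compact set $Z$, and note $\overline{U_0^{(n)}}\subset U_0^{(n-1)}$, so a point $\xi$ in the intersection has $\vert\Pi_{\mr{HTT}}\vert(\xi)\in\bigcap_n U_0^{(n)}=\{x_0\}$; alternatively, push forward and separate the quasi-compact image $\vert\Pi_{\mr{HTT}}\vert(Z)$ from the compact set of maximal points $\mb P^1(F_{\gerp})$, which is essentially Scholze's formulation.

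Second, the higher-rank issue you flag at the end is genuine, but your proposed resolution misfires: compatibility with specialization follows from continuity alone (no spectrality is needed), and ``functoriality of the canonical subgroup'' is not what transfers the conclusion from the maximal rank-one generalization $\tilde\xi$ back to $\xi$, since higher-rank points carry no Drinfeld module in the naive moduli sense and the open locus $\mc X(v)$ is not stable under specialization in general. What closes the argument is elementary valuation theory: $\vert\Pi_{\mr{HTT}}\vert(\tilde\xi)$ is a generalization of $x_0$, hence equals $x_0$, so Lemma \ref{lem:P^1(F)} (applied after base change to a complete algebraically closed field over the residue field of $\tilde\xi$) gives ordinarity at $\tilde\xi$, i.e.\ $\vert\mr{Ha}\vert=1$ there; since the value group at $\tilde\xi$ is the quotient of the one at $\xi$ by a convex subgroup, any element mapping to $1$ exceeds $\vert\pi(\xi)\vert^{v}$ for $v>0$, so $\xi$ satisfies the rational inequality defining $\mc X(v)$ and lies in $\vert\mc X_{\infty}(v)\vert$, contradicting $\xi\in Z$. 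With these two repairs your proof is a correct, detailed rendering of the argument the paper outsources to \cite[Lemma III.3.8]{ScholzeTorsion}.
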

\begin{proof}
Same application of quasi-compactness of $ \vert \mc X_{\infty} \vert \setminus  \vert \mc X_{\infty} (v) \vert $ as in  \cite[Lemma III.3.8]{ScholzeTorsion}.
\end{proof}

\begin{lem}\label{lem:cover}
For $U$ as in the lemma above we can find $\gamma_1, \ldots, \gamma_k$ elements of $\mr{GL}_2(F_{\gerp})$ such that 
\[
\vert \mc X_{\infty} \vert = \bigcup_{i=1}^k \gamma_i \vert {\Pi_{\mr{HTT}}} \vert^{-1}(U).
\]
\end{lem}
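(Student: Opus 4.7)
The plan is to reduce the statement about $|\mc X_{\infty}|$ to the corresponding covering statement on $\mb P^1$ and then pull it back via the map $|\Pi_{\mr{HTT}}|$. Since $|\Pi_{\mr{HTT}}|$ is $\mr{GL}_2(F_{\gerp})$-equivariant, for every $\gamma \in \mr{GL}_2(F_{\gerp})$ one has
\[
|\Pi_{\mr{HTT}}|^{-1}(\gamma U) = \gamma \cdot |\Pi_{\mr{HTT}}|^{-1}(U),
\]
so a finite family $\gamma_1, \dots, \gamma_k \in \mr{GL}_2(F_{\gerp})$ with $\mb P^1 = \bigcup_i \gamma_i U$ will immediately yield the desired cover of $|\mc X_{\infty}|$.

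The key step is therefore to verify that $\mb P^1 = \bigcup_{\gamma \in \mr{GL}_2(F_{\gerp})} \gamma U$. Since $U$ is open and contains $0 \in \mb P^1(F_{\gerp})$, it contains a basic rational subset of the form $\{|z| \leq |\pi|^M\}$ for some integer $M \geq 0$. I would then argue point by point: given $x \in \mb P^1$, applying the swap matrix $\smallmat{0}{1}{1}{0}$ (which acts as $z \mapsto 1/z$ via \eqref{eq:actionP1}) if necessary, I may assume $|z(x)| \leq 1$. The matrix $\gamma_N := \smallmat{\pi^N}{0}{0}{1}$ acts by $z \mapsto \pi^N z$, so for $N \geq M$ we have $|z(\gamma_N \cdot x)| = |\pi|^N |z(x)| \leq |\pi|^M$. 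This places $\gamma_N \cdot x$ inside $\{|z| \leq |\pi|^M\} \subset U$, so that $x \in \gamma_N^{-1} U$. In particular every point of $\mb P^1$ lies in some $\mr{GL}_2(F_{\gerp})$-translate of $U$.

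Having established the cover, I would invoke the quasi-compactness of the adic projective line $\mb P^1$ over $\mr{Spa}(F_{\gerp}, A_{\gerp})$ to extract a finite subcover $\mb P^1 = \bigcup_{i=1}^k \gamma_i U$, and conclude by
\[
|\mc X_{\infty}| \;=\; |\Pi_{\mr{HTT}}|^{-1}(\mb P^1) \;=\; \bigcup_{i=1}^k |\Pi_{\mr{HTT}}|^{-1}(\gamma_i U) \;=\; \bigcup_{i=1}^k \gamma_i \cdot |\Pi_{\mr{HTT}}|^{-1}(U).
\]
There is no substantial obstacle here: the entire argument reduces to the explicit action of two elementary matrices on the coordinate $z$, together with the observation that $\mb P^1(F_{\gerp})$ forms a single $\mr{GL}_2(F_{\gerp})$-orbit. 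The only small point to keep in mind is that the valuation $|z(x)|$ makes sense, and behaves well under these elementary translations, for every adic point $x$ (not only for classical ones), which is precisely what the formalism of adic spaces provides.
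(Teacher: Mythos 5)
Your proof is correct and is essentially the paper's own argument: both establish $\mr{GL}_2(F_{\gerp})\cdot U = \mb P^1$ by contracting an arbitrary point toward a rational point using powers of a diagonal matrix in $\pi$, then extract a finite subcover by quasi-compactness of $\mb P^1$ and transport it to $\vert \mc X_{\infty}\vert$ via the $\mr{GL}_2(F_{\gerp})$-equivariance of $\vert \Pi_{\mr{HTT}}\vert$. Your extra steps (the swap matrix to reduce to $|z|\le 1$ and the explicit disc $\{|z|\le |\pi|^M\}\subset U$) merely spell out the chart at infinity and the neighbourhood-basis point that the paper handles by noting $\infty$ is already a rational point lying in $U$.
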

\begin{proof}
 Let us show that $\mr{GL}_2(F_{\gerp}) U = \mb P^1$. The matrix $\gamma=\left( \begin{array}{cc}
 1 & 0 \\
 0 & \pi
 \end{array}\right)$ sends any point $z=[x:y]\neq \infty$ to $[x:\pi y]$. As $U$ is open and $[x:0]=[1:0] \in \mb P^1(F_{\gerp})$, there is a power of $\gamma$ which send $z$ to $U$. But as $\mb P^1$ is quasi-compact, we can cover it with finitely many translates of $U$ and the fact that $\vert {\Pi_{\mr{HTT}}} \vert$ is $\mr{GL}_2(F_{\gerp})$-equivariant allows us to conclude.
\end{proof}

Summing up these two lemmas we have the following theorem:
\begin{thm}\label{thm:perfectoid}
There is a perfectoid space 
\[
\mc X_{\infty} \sim \varprojlim_m \mc X(\pi^m)^{\mr{perf}}.
\]
\end{thm}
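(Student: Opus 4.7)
The plan is to glue finitely many $\mr{GL}_2(F_{\gerp})$-translates of the already-constructed affinoid perfectoid space $\mc X_{\infty}(v)$, using the cover of $|\mc X_{\infty}|$ provided by the Hodge--Tate--Taguchi period map. First, I would fix $v \leq \frac{1}{q^d+1}$ so that the earlier theorem gives the affinoid perfectoid space $\mc X_{\infty}(v) \sim \varprojlim_m \mc X(\pi^m)(v)^{\mr{perf}}$ sitting inside the topological inverse limit $|\mc X_{\infty}| = \varprojlim_m |\mc X(\pi^m)|$.

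Next, by Lemma \ref{lem:openU} I would choose an open neighbourhood $U$ of $\mb P^1(F_{\gerp})$ such that $|\Pi_{\mr{HTT}}|^{-1}(U) \subset |\mc X_{\infty}(v)|$, and then invoke Lemma \ref{lem:cover} to obtain $\gamma_1,\ldots,\gamma_k \in \mr{GL}_2(F_{\gerp})$ with
\[
|\mc X_{\infty}| = \bigcup_{i=1}^{k} \gamma_i \cdot |\Pi_{\mr{HTT}}|^{-1}(U).
\]
The action of $\mr{GL}_2(F_{\gerp})$ on the inverse system $\{\mc X(\pi^m)\}_m$ (defined at finite level via the quotient-by-$L_{\mr{coker}}$ recipe explained above, combined with a level shift absorbing the denominators of $\gamma_i$) upgrades to an action on $\mc X_{\infty}$ by isomorphisms of adic spaces. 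Hence each translate $\gamma_i \cdot \mc X_{\infty}(v)$ is again affinoid perfectoid, and each $\gamma_i \cdot |\Pi_{\mr{HTT}}|^{-1}(U)$ is an open perfectoid subspace of the topological inverse limit.

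I would then conclude by gluing. Being a perfectoid space is local in the analytic topology, and on overlaps $\gamma_i \cdot |\Pi_{\mr{HTT}}|^{-1}(U) \cap \gamma_j \cdot |\Pi_{\mr{HTT}}|^{-1}(U)$ the two perfectoid structures agree because both arise by restriction from the same inverse system of finite-level adic spaces, with transition isomorphism $\gamma_i \gamma_j^{-1}$ induced from a morphism of towers. The resulting glued object is a perfectoid space $\mc X_{\infty}$ with underlying topological space $\varprojlim_m |\mc X(\pi^m)|$, and the $\sim$-relation follows from the corresponding relation on each piece $\gamma_i \cdot \mc X_{\infty}(v)$, since the direct limit of functions at finite levels is already dense there.

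The main obstacle I anticipate is checking that the $\mr{GL}_2(F_{\gerp})$-action really extends from the set-theoretic description on $|\mc X_{\infty}|$ to an action by morphisms of adic spaces compatible with the perfectoid structure on $\mc X_{\infty}(v)$. Concretely, one has to verify that the isogeny/level-shift recipe defining $\gamma \cdot (\varphi,\eta)$ yields a compatible system of morphisms $\mc X(\pi^{m+N_i}) \to \mc X(\pi^m)$ at every finite level, whose inverse limit is an automorphism of the perfectoid space; this is the analogue of the verification carried out in \cite[\S III.3]{ScholzeTorsion} in the classical setting and should go through without substantive modification in the Drinfeld setup.
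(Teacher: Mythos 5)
Your overall strategy is exactly the paper's: use Lemma \ref{lem:openU} and Lemma \ref{lem:cover} to cover $\vert \mc X_{\infty}\vert$ by finitely many $\mr{GL}_2(F_{\gerp})$-translates of $\vert {\Pi_{\mr{HTT}}} \vert^{-1}(U) \subset \vert \mc X_{\infty}(v)\vert$ and conclude because being perfectoid is a local property. The one genuine discrepancy is your starting point: the object actually constructed as an \emph{affinoid} perfectoid space at level $v$ is only the anticanonical locus $\mc X_{\infty}(v)_a \sim \varprojlim_m \mc X(\pi^m)(v)^{\mr{perf}}_a$, not $\mc X_{\infty}(v)$ itself, so your premise that ``the earlier theorem gives the affinoid perfectoid space $\mc X_{\infty}(v)$'' is not what is available. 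The paper closes this inside the proof of the theorem by the additional observation that $\mc X_{\infty}(v) = \mr{GL}_2(A_{\gerp})\, \mc X_{\infty}(v)_a$ (at finite level all bases of the $\pi^m$-torsion are conjugate), and that by quasi-compactness finitely many translates $\gamma_j' \in \mr{GL}_2(A_{\gerp})$ suffice; hence $\mc X_{\infty}(v)$, and therefore each $\gamma_i\, \mc X_{\infty}(v)$, is covered by perfectoid spaces. You either need to include this step or cite the earlier ``first goal'' theorem with the understanding that its proof is precisely this translation argument; as written there is a small but real gap.

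Apart from that, your write-up is correct and, if anything, more explicit than the paper on the points it leaves implicit: the extension of the $\mr{GL}_2(F_{\gerp})$-action to isomorphisms of adic spaces via the quotient-by-$L_{\mr{coker}}$ recipe, and the agreement of the perfectoid structures on overlaps, are indeed the analogues of the verifications in \cite[\S III.3]{ScholzeTorsion} and pose no new difficulty here. Note also that the equivariance the paper ultimately records for the Hodge--Tate--Taguchi map of adic spaces is only for $\mr{GL}_2(A_{\gerp})$; the full $\mr{GL}_2(F_{\gerp})$-equivariance is used only at the level of topological spaces, which is all your covering argument requires.
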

\begin{proof}
By Lemma \ref{lem:cover} we have covered $\vert \mc X(v) \vert $ by finitely many copies of $\vert {\Pi_{\mr{HTT}}} \vert^{-1}(U) \vert$ and by Lemma \ref{lem:openU} this is contained in $\vert \mc X_{\infty} (v) \vert$. So $ \bigcup_{i=1}^k \gamma_i \mc X_{\infty} (v)$ covers $\mc X_{\infty} $. But, by definition, we have that 
\[
\mc X_{\infty} (v) = \mr{GL}_2(A_{\gerp})\mc X_{\infty} (v)_a,
\] 
as at finite level all bases are conjugate to each other. Again by quasi-compactness, we can pick a finite number of matrices $\gamma_j'$ in $\mr{GL}_2(A_{\gerp})$. So $\mc X_{\infty} $ is covered by perfectoid spaces and it is itself a perfectoid space.
\end{proof}
\begin{cor}
There is a $\mr{GL}_2(A_{\gerp})$-equivariant map of adic spaces 
\[
{\Pi_{\mr{HTT}}}: \mc X_{\infty} \rightarrow \mb P^1.
\]
\end{cor}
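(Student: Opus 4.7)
The plan is to upgrade the continuous topological map $\vert \Pi_{\mr{HTT}} \vert$ to a morphism of adic spaces, by first constructing it on the perfectoid open $\mc X_\infty(v)$, then transporting it via the $\mr{GL}_2(F_\gerp)$-action to each patch of the finite cover of $\mc X_\infty$ produced in the proof of Theorem \ref{thm:perfectoid}, and finally gluing.

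For $v \leq \frac{1}{q^d+1}$, Theorem \ref{thm:sousgroupe} produces canonical subgroups $C_m$ at every level over $\mc X(v)$, together with an HTT map whose cokernel is bounded uniformly in $m$. Combined with the tautological trivialisation $\eta : T_\gerp(\varphi) \simeq A_\gerp^2$ supplied by the tower, passage to the limit in $m$ and inversion of $\pi$ yield (as in Theorem \ref{previousthm}) a line sub-bundle
\[
\mc L \subset T_\gerp(\varphi) \otimes_{A_\gerp} \mc O_{\mc X_\infty(v)} \simeq \mc O_{\mc X_\infty(v)}^{2}.
\]
Via $\eta$, the line $\mc L$ classifies a morphism of adic spaces $\Pi^{(v)}: \mc X_\infty(v) \rightarrow \mb P^1$ whose underlying map on points is $\vert \Pi_{\mr{HTT}} \vert$ by Definition \ref{def:HTT}.

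By Lemma \ref{lem:cover} and the proof of Theorem \ref{thm:perfectoid}, there exist $\gamma_1, \dots, \gamma_k \in \mr{GL}_2(F_\gerp)$ with $\mc X_\infty = \bigcup_i \gamma_i \mc X_\infty(v)$. Each $\gamma_i$ acts as an isomorphism of adic spaces on $\mc X_\infty$ (via isogeny and change of trivialisation, as described before Lemma \ref{lem:P^1(F)}) and on $\mb P^1$ via \eqref{eq:actionP1}, so
\[
\Pi_i := \gamma_i \circ \Pi^{(v)} \circ \gamma_i^{-1}: \gamma_i \mc X_\infty(v) \rightarrow \mb P^1
\]
is a morphism of adic spaces. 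On overlaps, the $\Pi_i$ agree on underlying topological spaces by the $\mr{GL}_2(F_\gerp)$-equivariance of $\vert \Pi_{\mr{HTT}} \vert$; and since $\mc X_\infty$ is perfectoid (hence reduced) while $\mb P^1$ is separated, it suffices to check equality of the pullback of the affine coordinate on a chart of $\mb P^1$. That pullback is, on each patch, the fake Hasse invariant $\mf z$ attached to the universal HTT line, so the $\Pi_i$ glue to a morphism $\Pi_{\mr{HTT}}: \mc X_\infty \rightarrow \mb P^1$ of adic spaces. Equivariance under $\mr{GL}_2(A_\gerp) \subset \mr{GL}_2(F_\gerp)$ is then immediate, because this subgroup acts purely by change of trivialisation of $T_\gerp(\varphi)$ and thereby preserves $\mc L$.

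The main obstacle I anticipate is the gluing step: one must verify that the line $\mc L$ constructed on $\mc X_\infty(v)$ from the canonical subgroup extends canonically across $\mc X_\infty$ in a way compatible with the $\gamma_i$-translates. The cleanest route is to define $\mc L$ globally on $\mc X_\infty$ using the isogeny argument already present in the proof of Theorem \ref{previousthm} --- any point can be brought into $\mc X_\infty(v)$ after dividing by a suitable $\gerp^m$-torsion subgroup --- and to check that the resulting line in $T_\gerp(\varphi) \otimes F_\gerp$ does not depend on the chosen isogeny. The morphism $\Pi_{\mr{HTT}}$ is then simply the classifying map for $\mc L$ inside $\mb P(T_\gerp(\varphi)\otimes \mc O_{\mc X_\infty}) \cong \mb P^1$.
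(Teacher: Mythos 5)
Your construction is essentially the argument the paper leaves implicit: the Hodge--Tate--Taguchi map is a morphism of adic spaces over $\mc X_{\infty}(v)$ thanks to Theorem \ref{thm:sousgroupe}, and one spreads it over the finite cover $\bigcup_i \gamma_i \mc X_{\infty}(v)$ of Lemma \ref{lem:cover} by $\mr{GL}_2(F_{\gerp})$-equivariance, exactly as in Scholze's construction of the Hodge--Tate period map, with $\mr{GL}_2(A_{\gerp})$-equivariance then being immediate. The only step to phrase carefully is the gluing --- agreement of the $\Pi_i$ on underlying topological spaces is not by itself sufficient for morphisms of adic spaces --- but your fallback of comparing pullbacks of the coordinate (the fake Hasse invariant) together with the isogeny-invariance of the HTT line from the proof of Theorem \ref{previousthm} is precisely the correct, and the intended, way to conclude.
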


As in \cite{ScholzeTorsion}, one can show that the whole construction is compatible with changing the tame level, and hence all objects at infinite level admit an action of the prime-to-$\gerp$ Hecke operators, which act trivially on the flag variety.
\subsection{Overconvergent Drinfeld modular forms}
In this section we give a new definition of overconvergent Drinfeld modular forms of weight $s \in \mb Z_p$ \`a la Chojecki--Hansen--Johansson \cite{CHJ}.
In order to proceed, we need the following lemma:
\begin{lem}
Let $\omega=\mr{Lie}(\overline{\mc E})^{\vee}$ be the sheaf of weight one Drinfeld modular forms on $ \mc X$ and $\omega_{\infty}$ the pullback to $\mc X_{\infty}$. Over $\mc X_{\infty}(v)$, we have 
\[
\omega_{\infty}  = \Pi_{\mr{HTT}}^* \mc O(1).
 \]
\end{lem}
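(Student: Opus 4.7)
The plan is to realize $\omega_\infty$ as a quotient of a tautological rank-two trivial bundle on $\mc X_\infty(v)$, and to match this quotient with the pullback via $\Pi_{\mr{HTT}}$ of the tautological quotient $\mc O^2 \twoheadrightarrow \mc O(1)$ on $\mb P^1$. This is the exact Drinfeld analogue of the argument in \cite{CHJ} and \cite{ScholzeTorsion}, with the Hodge--Tate--Taguchi map playing the role of Hodge--Tate.

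First, I would exploit the fact that over $\mc X_\infty(v)$ the universal Drinfeld module carries the trivialization $\eta : A_\gerp^2 \xrightarrow{\sim} T_\gerp(\varphi)$, and that via the Carlitz--Hayes pairing $T_\gerp(\varphi) \times T_\gerp(\varphi^D) \to T_\gerp(\mr{CH})$, combined with the tautological trivialization of the rank-one module $T_\gerp(\mr{CH})$ available at infinite level, this induces a dual trivialization $A_\gerp^2 \xrightarrow{\sim} T_\gerp(\varphi^D)$. The Hodge--Tate--Taguchi map of Theorem \ref{previousthm} then yields, upon tensoring with $\mc O_{\mc X_\infty(v)}$, a surjection of $\mc O$-modules
\[
\mc O_{\mc X_\infty(v)}^{2} \twoheadrightarrow \omega_\infty.
\]
The promotion from a pointwise surjection to a morphism of adic sheaves rests on Theorem \ref{thm:sousgroupe}(ii), which controls the integral failure of surjectivity by a bounded power of $\pi$ uniformly on $\mc X(v)$, and in particular ensures that the generic fibre is genuinely surjective.

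Second, I would identify the kernel $\mc K \subset \mc O_{\mc X_\infty(v)}^{2}$ of this surjection with $\Pi_{\mr{HTT}}^{*}\mc O(-1)$. By Definition \ref{def:HTT} the period map sends $(\varphi,\eta)$ to the line $\eta(\mr{Lie}(\varphi_{\overline{K}})) \subset \overline{K}^2$; under the Carlitz--Hayes duality, this line in $T_\gerp(\varphi)\otimes\overline{K}$ corresponds precisely to the kernel of $\mr{HTT}\otimes\mr{id}$ in $T_\gerp(\varphi^D)\otimes\overline{K}$ (as already observed parenthetically in the proof of Theorem \ref{previousthm}), so the fibre of $\mc K$ at $(\varphi,\eta)$ is the fibre of $\Pi_{\mr{HTT}}^{*}\mc O(-1)$ at the same point. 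Since both are line subbundles of a rank-two trivial bundle agreeing on every geometric fibre, they agree as subsheaves, and pulling back the tautological sequence
\[
0 \to \mc O(-1) \to \mc O^{2} \to \mc O(1) \to 0
\]
on $\mb P^1$ and comparing with the HTT sequence gives $\omega_\infty \xrightarrow{\sim} \Pi_{\mr{HTT}}^{*}\mc O(1)$.

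The main delicacy I expect lies in the second step: one must verify that the Carlitz--Hayes duality is compatible with the $\mr{GL}_2(F_\gerp)$-action on $\mb P^1$ defined by the $\det(\gamma)\gamma^{-1}$-normalisation of \eqref{eq:actionP1}, so that identifying $T_\gerp(\varphi^D)$ with $T_\gerp(\varphi)^{\vee}$ at infinite level indeed sends the kernel of HTT to the pullback of the tautological sub $\mc O(-1)$ and not, say, to a twist of it by a character. Once this bookkeeping of trivializations is set up consistently, the rest is a direct translation of the classical argument.
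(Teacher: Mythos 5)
Your argument is correct and is in substance the paper's own proof: the paper simply observes that, by Definition \ref{def:HTT}, the period map classifies the line $\mr{Lie}(\varphi_{\overline{K}})$ inside the trivialised $T_\gerp(\varphi)\otimes\overline{K}\cong\overline{K}^2$, so under the flag-variety interpretation of $\mb P^1$ the sheaf $\mr{Lie}(\overline{\mc E})$ pulls back the tautological line and hence $\omega_{\infty}=\Pi_{\mr{HTT}}^*\mc O(1)$. Your packaging via the dual HTT surjection $\mc O^2\twoheadrightarrow\omega_\infty$, the identification of its kernel with $\Pi_{\mr{HTT}}^*\mc O(-1)$, and the tautological exact sequence is a more detailed (and duality-bookkeeping-aware) rendering of that same one-line identification rather than a different route.
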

\begin{proof}
If we interpret $\mb P^1$ as the flag variety for lines in a $2$-dimensional vector space,  then we have locally that $\mr{Lie}(\overline{\mc E})$ identifies naturally with the structural sheaf $\mc O$. Hence $\omega_{\infty} $ identifies with the tautological line bundle. 
\end{proof}
We define $\mb P^1_v=\set{z \in \mb P^1 \vert \exists z_0 \in \pi A_{\gerp} \mbox{ s.t. }|z-z_0|< q^{-dv}<1}$. This set is stable for the action of $\Gamma_0(\gerp)$. We also define $\mc U_{\infty,v}$ to be $\Pi_{\mr{HTT}}^{-1}(\mb P^1_v)$. 

\begin{lem}\label{lem:neighOrdlocus}
Let $q_{\infty}: \mc X_{\infty} \rightarrow \mc X_0(\pi)$ be the natural projection and $q_{\infty}^{\mr{perf}}: \mc X_{\infty} \rightarrow \mc X^{\mr{perf}}_0(\pi)$. Then the $\mc X_v := q_{\infty}(\mc U_{\infty,v})$, with $v$ tending to $0$, are a set of strict affinoid neighbourhoods of the ordinary multiplicative locus in $\mc X_0(\pi)$, and similarly for $\mc X^{\mr{perf}}_v := q^{\mr{perf}}_{\infty}(\mc U_{\infty,v})$.
\end{lem}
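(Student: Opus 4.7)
First I would make explicit the identification of the ordinary multiplicative locus. Since $q_\infty : \mc X_\infty \to \mc X_0(\pi)$ is the quotient by $\Gamma_0(\gerp) \subset \mr{GL}_2(A_\gerp)$ --- the Borel fixing the reduction of the line $A_\gerp \cdot e_1$ mod $\gerp$ --- the ordinary multiplicative stratum of $\mc X_0(\pi)$, consisting of pairs $(\varphi, C)$ with $\varphi$ ordinary and $C = C_1$ the canonical subgroup, lifts in $\mc X_\infty$ exactly to those trivializations $\eta$ that send the canonical Tate line $T_\gerp(\varphi^{\mr{CH}})$ into $A_\gerp \cdot e_1$. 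By Lemma \ref{lem:P^1(F)} this locus maps under $\Pi_{\mr{HTT}}$ into $\mb P^1(F_\gerp)$, and a short computation with the action $\gamma(z) = (az+c)/(bz+d)$ of \eqref{eq:actionP1} shows that the $\Gamma_0(\gerp)$-orbit of the point $z = 0$ is precisely $\pi A_\gerp$ (since $\gamma \cdot 0 = c/d$ and $c \in \gerp$, $d \in A_\gerp^{\times}$ for $\gamma \in \Gamma_0(\gerp)$). Hence $\Pi_{\mr{HTT}}^{-1}(\pi A_\gerp) = q_\infty^{-1}(\mc X_0(\pi)^{\mr{ord, mult}})$, and because $\mb P^1_v$ is by construction a $\Gamma_0(\gerp)$-stable open neighborhood of $\pi A_\gerp$, the set $\mc U_{\infty,v}$ is a $\Gamma_0(\gerp)$-stable open neighborhood in $\mc X_\infty$ of $q_\infty^{-1}(\mc X_0(\pi)^{\mr{ord, mult}})$.

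It then follows that $\mc X_v = q_\infty(\mc U_{\infty,v})$ is a well-defined open neighborhood of the ordinary multiplicative locus in $\mc X_0(\pi)$. To identify it concretely and check that it is affinoid, I would compare with the canonical-section morphism $\iota : \mc X(v') \hookrightarrow \mc X_0(\pi)$, $\varphi \mapsto (\varphi, C_1)$, which is defined on a strict affinoid neighborhood of the ordinary locus by Theorem \ref{thm:sousgroupe}(i). The map $\iota$ is an open immersion onto the canonical component of a strict neighborhood of the ordinary stratum in $\mc X_0(\pi)$ --- it is a section of the finite \'etale forgetful map over this component --- so its image is affinoid. I would then verify that $\mc X_v = \iota(\mc X(v'))$ for an appropriate $v' = v'(v)$, by combining the cokernel estimate of Theorem \ref{thm:sousgroupe}(ii) (which controls the discrepancy between $\mr{Lie}(\varphi)$ and the trivialization by $\pi^w$ with $w \geq v/(q^d-1)$) with Lemma \ref{lem:lubinkatz} to translate the condition $\Pi_{\mr{HTT}}(\varphi, \eta) \in \mb P^1_v$ into an explicit bound on $\ha(\varphi)$. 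As $v$ varies the balls $\mb P^1_v$ form a fundamental system of neighborhoods of $\pi A_\gerp$, so the $\mc X_v$ form a cofinal family of strict affinoid neighborhoods of $\mc X_0(\pi)^{\mr{ord, mult}}$. The perfectoid variant $\mc X_v^{\mr{perf}} = q_\infty^{\mr{perf}}(\mc U_{\infty,v})$ follows at once, since perfection does not alter the underlying topological space.

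The technical heart of the argument --- and the main obstacle --- is the radius-comparison step: extracting from Theorem \ref{thm:sousgroupe}(ii) a clean dictionary between the $\gerp$-adic distance on $\mb P^1$ that defines $\mb P^1_v$ and the valuation of the Hasse invariant that defines $\mc X(v')$. The factor $v/(q^d - 1)$ in the cokernel bound means the correspondence is only determined up to an explicit multiplicative constant, which is sufficient for cofinality but has to be tracked if one wants to pin down $v'(v)$ precisely. A secondary but more bookkeeping-heavy point is verifying that the quotient $\mc U_{\infty,v} \twoheadrightarrow \mc X_v$ descends to a genuine adic subspace of $\mc X_0(\pi)$ rather than just to a set of points; this is formal once one notes that $\mc U_{\infty,v}$ is $\Gamma_0(\gerp)$-saturated and that $q_\infty$ factors through the finite-level cover $\mc X(\pi) \to \mc X_0(\pi)$, at which stage the identification with $\iota(\mc X(v'))$ carries the affinoid structure downstairs.
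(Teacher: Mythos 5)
Your first paragraph is essentially sound and agrees with the paper's picture: by Lemma \ref{lem:P^1(F)} the preimage under $q_{\infty}$ of the ordinary multiplicative locus is $\Pi_{\mr{HTT}}^{-1}(\pi A_{\gerp})$, and $\mb P^1_v$ is a $\Gamma_0(\gerp)$-stable neighbourhood of it, so $\mc X_v$ is an open neighbourhood of that locus (openness of the image should be justified explicitly, e.g.\ by the fact that $q_{\infty}$ is pro-\'etale, which is what the paper does). The genuine gap is in your second paragraph: the affinoid property of $\mc X_v$ is made to rest on the exact identification $\mc X_v=\iota(\mc X(v'))$ with the image of the canonical-subgroup section, and you never prove this equality; indeed you concede at the end that the dictionary between the radius $q^{-dv}$ defining $\mb P^1_v$ and a bound on $\ha$ is only controlled up to a multiplicative constant. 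But a sandwich $\iota(\mc X(v_1))\subset\mc X_v\subset\iota(\mc X(v_2))$ only yields that the $\mc X_v$ are a cofinal system of open neighbourhoods; it does not make any individual $\mc X_v$ affinoid, which is part of the statement and is used later in the construction of $\omega^s_v$. Two smaller points: Lemma \ref{lem:lubinkatz} is about the Hasse invariant of the quotient by the canonical subgroup and is not the relevant input for this dictionary (the paper invokes the explicit description of the canonical subgroup from Theorem \ref{thm:sousgroupe} to see that $v'\to 0$ as $v\to 0$); and the forgetful map $\mc X_0(\pi)\to\mc X$ is finite flat but not \'etale everywhere, so your phrase ``section of the finite \'etale forgetful map'' needs to be restricted to the relevant locus before you can conclude that $\iota$ is an open immersion with affinoid image.

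The paper sidesteps the need for any exact period-versus-Hasse computation by proving affinoidness upstairs and descending: $\mc U_{\infty,v}=\Pi_{\mr{HTT}}^{-1}(\mb P^1_v)$ is affinoid because the preimages under $\Pi_{\mr{HTT}}$ of the standard cover of $\mb P^1$ are affinoid; by the definition of the inverse-limit topology such an affinoid comes from a finite level; and $\mc X_v$ (resp.\ $\mc X_v^{\mr{perf}}$) is then obtained by taking $\Gamma_0(\gerp)$-invariants via \cite[Corollary 6.26]{CHJ}. To repair your argument you must either carry out the exact comparison $\mc X_v=\iota(\mc X(v'))$ (including at the cusps), or replace your affinoidness step by this descent-by-invariants argument.
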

\begin{proof}
First note that $q_{\infty}(\mc U_{\infty,v})$ is open as $q_{\infty}$ is pro-\'etale.
We have to show that a point $(\varphi, \eta) \in \mc U_{\infty,v}$ defines a Drinfeld module in $\mc X_{v'}$ for some $v'$, and that $\eta$ restricted to the first coordinates trivialises the canonical subgroup of $\varphi$. By Lemma \ref{lem:P^1(F)}, we know that  the image of the ordinary locus is contained in $\mb P^1_v$. This implies that we can suppose that $\varphi$ has good reduction and  that $\varphi \in \mc X_{v'}$, for a suitable $v'$. Using the explicit form of the canonical subgroup \cite[Th\'eor\`eme 4.4]{NicoleRosso}, we see that if $v$ tends to $0$ then $v'$ also tends to $0$ (N.B. $v'$ is denoted $w$ in {\it loc. cit.}). 
It is also clear from the definition of the Hodge--Tate--Taguchi period map that the defining line in the ordinary case is given by the canonical subgroup. As $q_{\infty}(\mc U_{\infty,v})$ is open, this implies that $\eta$ restricted to the first coordinate must give a point in the neighbourhood of the multiplicative ordinary locus.
To check that $q_{\infty}(\mc U_{\infty,v})$ is affine we note that the pre-images by $\Pi_{\mr{HTT}}$ of the standard cover of $\mb P^1$ are affinoids, and  hence the inverse image of the affinoid $\mb P^{1}_v$ is affinoid. By the definition of the topology in the inverse limit
\[
\vert \mc X_{\infty}(v) \vert \cong \vert \varprojlim_m \mc X(\pi^n)(v)\vert,
\]
affinoids always come from a finite level. One can conclude by taking invariants thanks to \cite[Corollary 6.26]{CHJ}.
\end{proof}

Let $\mf z$ be the pullback via the Hodge--Tate--Taguchi map of the coordinate $z$ on $\mb P^1$. In \cite{ScholzeTorsion} this is called a fake Hasse invariant, as it commutes with prime-to-$\gerp$ Hecke operators.
Note that  $\omega_{\infty \vert_{\mb P^1_v}} = \mc O_{\mc U_{\infty,v}}$; indeed we can define an element $s$ which trivialises $\mc O(1)$ as in \cite[\S 2.4]{CHJ}. 
Identify $\mc O(1)$ with the contracted product $(\mr{GL}_2 \times \mb A^1)/ B$ where $B$ is the Borel subgroup of lower triangular matrices and $\gamma$ in $B$ acts on $a \in \mb A^1$ via multiplication by $d^{-1}$ and on $\mr{GL}_2$ by right multiplication. A global section is hence a map $f: \mr{GL}_2 \rightarrow \mb A^1$ such that $f(gh)=d_{h}f(\gamma)$, for $g$ in $\mr{GL}_2$, $h$ in $B$ and $d_{h}$ is the right lower entry of $h$. The function $s$ sending $g$ to $-b_{g}$ i.e., minus the upper right entry  of $g$, satisfies this condition. Moreover $s=0$ if and only if $g$ is in $B$. Hence $s$ is a non-vanishing section of $\mc O(1)$ on $\mb P^1 \backslash \set{\infty}$.
We want to see how $\gamma$ in $\mr{GL}_{2}(F_{\gerp})$ acts on $s$; recall the action on $\mb P^1$ given in \eqref{eq:actionP1}. If $g$ corresponds to $[x:y]$, then $\gamma. [x:y]$ is the image of  $\mr{det}(\gamma)\gamma^{-1}g$ in $(\mr{GL}_2/B)(\mr F_{\gerp})$. Let $g= \left ( 
\begin{array}{cc}
0 & -1 \\
1 & z
\end{array} \right)$, then 
\[
\gamma^* s (g) = s(\mr{det}(\gamma)\gamma^{-1}g) = (bz+d)s(g).
\]
Then, let $\mf s$ be the pullback via the HTT map of $s$, which trivialises $\omega_{\infty \vert_{\mb P^1_v}}$. We then have a cocycle 
\[
j(\gamma, \mf z)= \frac{\gamma^* \mf s}{ \mf s}= (b \mf z +d).
\]

\begin{dfn} Let $v \leq \frac{1}{q^d+1}$. The space of perfectoid overconvergent Drinfeld modular forms of weight $s$ and radius of overconvergence $v$ is 
\[
M_s(v):=\set{f : \mc U_{\infty,v} \rightarrow C \vert f(\gamma \mf z )= {(b\mf z+d)}^{-s}f(\mf z), \forall \gamma \in \Gamma_0(\gerp)}.
\]
\end{dfn}

We want to compare this definition to that in \cite[D\'efinition 4.11]{NicoleRosso}. We define a sheaf $\omega^{s}_v$ on $q^{\mr{perf}}_{\infty}(\mc U_{\infty,v})$ as
\[
\omega^{s}_v (\mc U)= \set{f :{q^{\mr{perf}}_{\infty}}^{-1}( \mc U) \rightarrow C \vert f(\gamma\mf z )= {(b\mf z+d)}^{-s}f(\mf z), \forall \gamma \in \Gamma_0(\gerp)}.
\]
We shall show that this sheaf is a line bundle on $q^{\mr{perf}}_{\infty}(\mc U_{\infty,v})$ under assumption \ref{assum:proetale}.  This amounts to finding locally a generator for it.

As in \cite[Definition 4.1]{ScholzeHodge} we give the following definition:

\begin{dfn}
Let $\mc X$ be a rigid analytic variety or the perfection of a rigid analytic variety over $\mr{Spa}(F_{\gerp},A_{\gerp})$. On $\mc X_{\mr{proet}}$ we define the the integral completed structure sheaf 
$$ \hat{ \mc O}_{\mc X}^+ = \varprojlim { \mc O}_{\mc X}^+/\pi^n  $$ and the completed structure sheaf
\[ \hat{ \mc O}_{\mc X}= \hat{ \mc O}_{\mc X}^+ \left[ \frac{1}{\pi} \right]. \]
\end{dfn}

We make the following
\begin{ass}\label{assum:proetale}
Let $K  \subset \mr{GL}_2(\mb Z_p)$ and $\mc X (K)$ the corresponding rigid space. We suppose that for all admissible opens $\mc U$ of $q^{\mr{perf}}_{\infty}(\mc U_{\infty,v})$ in the pro-\'etale site $\mc X (K)^{\mr{perf}}_{\mr{proet}}$, $\hat{ \mc O}^+_{\mc X(K)^{\mr{perf}}}(\mc U)$ is the $\pi$-adic completion of ${ \mc O^+}_{\mc X(K)^{\mr{perf}}}(\mc U)$.
\end{ass}

\noindent
As pointed out in \cite{ScholzeHodge} it is not obvious in general that $ \mc O^+_{\mc Y}(\mc U)$ is dense in $\hat{ \mc O}^+_{\mc Y}(\mc U)$ but nonetheless this is known for rigid varieties over $\mb Q_p$ thanks to \cite[Corollary 6.19]{ScholzeHodge}.

\begin{lem}\label{lem:invariants}
For every level $K \subset \mr{GL}_2(\mb Z_p)$ we have 
\[
\mc O_{\mc X_{\infty}}^{K}= \mc O_{\mc X(K)^{\mr{perf}}},
\] 
which means $\mc O_{\mc X_{\infty}}(q_{\infty}^{-1}(\mc U))^{K} = \mc O_{\mc X(K)^{\mr{perf}}}(\mc U)$ for every admissible open $\mc U$.
\end{lem}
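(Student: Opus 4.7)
The plan is to reduce the statement to a local Galois descent computation at each finite level, and then pass to the inverse limit using Assumption \ref{assum:proetale}. First I would work locally on an affinoid subset $\mc U = \mr{Spa}(R,R^+)$ of $\mc X(K)^{\mr{perf}}$; then $\mc V := q_\infty^{-1}(\mc U)$ is an affinoid perfectoid subspace of $\mc X_\infty$ on which $K$ acts continuously. For $m$ large enough, the image of $K$ in $\mr{GL}_2(A_{\gerp}/\pi^m)$ is a finite quotient $K_m := K/(K \cap \Gamma(\pi^m))$, and the map $\mc X(\pi^m) \to \mc X(K)$ is a finite \'etale Galois cover with group $K_m$ (with $K$ shrunk if necessary to avoid neatness issues at elliptic points). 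Since Frobenius is \'etale in characteristic $p$, perfection preserves finite \'etaleness and Galois groups, so the pullback $\mc V_m := \mc V \times_{\mc X_\infty} \mc X(\pi^m)^{\mr{perf}} \to \mc U$ remains a finite \'etale Galois cover with group $K_m$.

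Standard Galois descent at each finite level then gives $\mc O_{\mc X(\pi^m)^{\mr{perf}}}(\mc V_m)^{K_m} = \mc O_{\mc X(K)^{\mr{perf}}}(\mc U)$. Theorem \ref{thm:perfectoid} together with Assumption \ref{assum:proetale} implies that $\hat{\mc O}^+_{\mc X_\infty}(\mc V)$ is the $\pi$-adic completion of $\varinjlim_m \mc O^+_{\mc X(\pi^m)^{\mr{perf}}}(\mc V_m)$. Since $K$ acts through its finite quotients $K_m$ on the $m$-th term of the colimit, the standard argument that any $K$-invariant element represented at level $m$ becomes $K_{m'}$-invariant at some higher level $m' \geq m$ yields
\[
\left(\varinjlim_m \mc O^+(\mc V_m)\right)^K \;=\; \varinjlim_m \mc O^+(\mc V_m)^{K_m} \;=\; \mc O^+(\mc U).
\]
Applying this modulo $\pi^n$ for every $n$, then taking the inverse limit (which commutes with the profinite action of $K$), gives $\hat{\mc O}^+_{\mc X_\infty}(\mc V)^K = \hat{\mc O}^+_{\mc X(K)^{\mr{perf}}}(\mc U)$. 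Inverting $\pi$ yields the stated equality for $\mc O$, and sheafifying over admissible opens concludes.

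The main obstacle will be controlling what happens at the boundary, where the finite-level map $\mc X(\pi^m) \to \mc X(K)$ may fail to be \'etale. Here one must rely on almost mathematics: Galois descent holds only up to $\pi^\infty$-almost zero error coming from ramification at the cusps. This is precisely the role that the almost purity theorem \cite[Theorem 7.9(iii)]{ScholzeIHES} plays in the construction of $\mc X_\infty$ via the Hodge--Tate--Taguchi map, and the same input ensures that the almost-isomorphisms become genuine isomorphisms after inverting $\pi$. A secondary technical point is the density statement of Assumption \ref{assum:proetale}, which is what legitimises identifying the completed pro-\'etale structure sheaf with the $\pi$-adic completion of its uncompleted cousin, allowing the commutation of taking $K$-invariants with completion in the argument above.
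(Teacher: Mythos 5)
There is a genuine gap at the crux of your argument: the passage from the uncompleted colimit to the completed structure sheaf. Your chain $\bigl(\varinjlim_m \mc O^+(\mc V_m)\bigr)^K = \varinjlim_m \mc O^+(\mc V_m)^{K_m} = \mc O^+(\mc U)$ is fine integrally, but the step ``applying this modulo $\pi^n$ for every $n$, then taking the inverse limit'' is not formal. Taking $K$-invariants does not commute with reduction mod $\pi^n$: an element of $M/\pi^n$ can be $K$-invariant without being the image of an invariant element of $M$ (the obstruction lives in cohomology with $\pi^n$-torsion coefficients), so $\bigl((\varinjlim_m \mc O^+(\mc V_m))/\pi^n\bigr)^K = \mc O^+(\mc U)/\pi^n$ is precisely the non-trivial descent statement, and in this setting it is only known in the almost sense. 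This is exactly the point where the paper invokes \cite[Lemma 2.24]{CHJ}, applied to $\mc O^+_{\mc X_\infty}/\pi^m$, to get $\hat{\mc O}^+_{\mc X_\infty}(q_\infty^{-1}(\mc U))^K = \hat{\mc O}^+_{\mc X(K)}(\mc U)$, and then uses Hypothesis \ref{assum:proetale} to sandwich the uncompleted sections $\mc O^+_{\mc X(K)^{\mr{perf}}}(\mc U)$ between the completed invariants and force equality. Without an input of this kind (almost vanishing of the relevant continuous cohomology, or the CHJ lemma itself), your argument does not close; note also that even granting the almost statement you would a priori only identify the invariants with the $\pi$-adic \emph{completion} of $\mc O^+_{\mc X(K)^{\mr{perf}}}(\mc U)$, which is where Assumption \ref{assum:proetale} must enter, as it does in the paper.

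Your treatment of the boundary is also under-justified and diverges from what is actually needed. You assert that ``Galois descent holds up to $\pi^\infty$-almost zero error coming from ramification at the cusps'' and appeal to almost purity, but the finite-level covers need not be \'etale at the cusps, and no almost-descent statement there is proved or cited; the paper instead argues at the boundary by a different mechanism: a $K$-invariant section is bounded off the cusps, each finite-level approximant extends across the cusps by the rigid-analytic Riemann extension theorem \cite[Theorem 1.6 I)]{LutkExtensionCodim}, and the maximum modulus principle shows the extensions form a $\pi$-adic Cauchy sequence, hence glue to a section of $\mc O_{\mc X(K)^{\mr{perf}}}(\mc U)$. Two smaller points: ``Frobenius is \'etale in characteristic $p$'' is false (it is a finite, purely inseparable universal homeomorphism; what you want is topological invariance of the \'etale site, which does give that perfection preserves finite \'etale Galois covers), and you are not free to ``shrink $K$'' since the lemma is asserted for a fixed level $K$ -- fortunately, for Drinfeld modular curves with tame level $\mf N$ there are no elliptic points, so no shrinking is needed away from the cusps.
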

\begin{proof}
We can check it locally. First suppose that $\mc U$ does not intersect the boundary. Applying \cite[Lemma 2.24]{CHJ} to $\mc O^+_{\mc X_{\infty}}/ \pi^m(q_{\infty}^{-1}(\mc U))$, we get that  
\[
\hat{\mc O}^+_{\mc X_{\infty}}(q_{\infty}^{-1}(\mc U))^{K}=\hat{ \mc O}^+_{\mc X(K)}(\mc U),
\] 

where $\hat{\phantom{e}}$  denotes the integral completed structure sheaf on the pro-\'etale site.
We have the following inclusions:
$$
\begin{array}{ccccc}
{ \mc O^+}_{\mc X(K)^{\mr{perf}}}(\mc U)&\subset & \hat{ \mc O}^+_{\mc X(K)^{\mr{perf}}}(\mc U) &=& \hat{\mc O}^+_{\mc X_{\infty}}(q_{\infty}^{-1}(\mc U))^{K}\\
{ \mc O^+}_{\mc X(K)^{\mr{perf}}}(\mc U)& \subset & {\mc O^+}_{\mc X_{\infty}}(q_{\infty}^{-1}(\mc U))^{K} &=& \hat{\mc O}^+_{\mc X_{\infty}}(q_{\infty}^{-1}(\mc U))^{K}.\\
\end{array}
$$
If Hypothesis \ref{assum:proetale} holds, then  $\hat{ \mc O}^+_{\mc X(K)^{\mr{perf}}}(\mc U)$ is the $\pi$-adic completion of $\set{ \mc O^+_{\mc X(K)^{\mr{perf}}} (\mc U)/\pi^n}$ and hence the objects in the first line are all the same, and this forces the first inclusion in the second line to be an equality.

If $\mc U$ intersects the boundary, any section $f$ of $\mc O_{\mc X_{\infty}}(q_{\infty}^{-1}(\mc U))^{K}$ defines a bounded section of $\mc O_{\mc X_{\infty}}(q_{\infty}^{-1}(\mc U) \setminus \set{ \mr{cusps}})^{K}$ and hence, by the previous case, a bounded element of ${ \mc O}_{\mc X(K)^{\mr{perf}}}(\mc U  \setminus \set{ \mr{cusps}})$. Write $f$ as the $\pi$-adic limit $\varinjlim f_n$, where every $f_n$ comes from pullback from a finite level in the limit $\mc X(K)^{\mr{perf} } = \varprojlim \mc X(K)$. Each $f_n$ can be extended uniquely to an element $\tilde{f}_n$ of $ \mc O_{\mc X(K)}(\mc U)$ by \cite[Theorem 1.6 I)]{LutkExtensionCodim}. We want to show that $\tilde{f}= \varinjlim \tilde{f}_n$ is well-defined. It is enough to check that they form a Cauchy sequence for the $\pi$-adic topology. If $\tilde{f}_n $ and $\tilde{f}_m$ are defined on the same $\mc X(K)$ and $\mr{sup}\vert f_n -f_m\vert_{\pi}< \epsilon$ on $\mc U \setminus \set{\mr{cusps}}$, the extension $\tilde{f}_n -\tilde{f}_m$ has sup-norm $\leq \epsilon$ by the maximum modulus principle, hence $\tilde{f}$ is a well-defined element of $\mc O_{\mc X(K)^{\mr{perf}}}(\mc U)$.
\end{proof}

Let $\xi_{\mc U}$ be a basis of $\omega^{\mr{perf}}$ over $q^{\mr{perf}}_{\infty}(\mc U)$, and let $\mf s = t_{\mc U}  {q^{\mr{perf}}_{\infty}}^{-1}(\xi_{\mc U})$, for $t_{\mc U}$ a unit in $\mc O_{\mc X_{\infty}}(\mc U)$. We want to make sense of $ t_{\mc U}^s$ for $s \in \mb Z_p$. As in \cite[Proposition 2.27 ]{CHJ} we have:
\begin{lem}\label{lem:factor}
We can write 
\[
t_{\mc U}= t'_{\mc U} s'_{\mc U},
\]
with $t'_{\mc U} \in 1+\pi\mc O_{\mc X_{\infty}}(\mc U)$ and $s'_{\mc U} \in \mc O_{\mc X_0(\gerp)^{\mr{perf}}}$. Moreover $\gamma^* t_{\mc U} = (b \mf z + d)t_{\mc U}$.
\end{lem}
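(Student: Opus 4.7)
The plan is to follow the strategy of \cite[Proposition 2.27]{CHJ}, adapted to the equal characteristic $p$ setting at hand. The transformation law $\gamma^* t_{\mc U} = (b\mf z + d) t_{\mc U}$ is immediate from the cocycle relation $\gamma^* \mf s = (b\mf z + d) \mf s$ established just above the statement, together with the observation that $\xi_{\mc U}$ is the pullback of a basis of $\omega^{\mr{perf}}$ from $\mc X_0(\gerp)^{\mr{perf}}$, hence ${q_\infty^{\mr{perf}}}^{-1}(\xi_{\mc U})$ is $\Gamma_0(\gerp)$-invariant; dividing the cocycle relation for $\mf s$ by this invariant section gives the claim.

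For the factorization, the first step is to produce a natural basis $\eta$ of $\omega$ at finite level. Applying the Hodge--Tate--Taguchi map to a generator of the Taguchi dual of the canonical subgroup of the universal Drinfeld module yields, by Theorem \ref{thm:sousgroupe}(ii), a section $\eta$ of $\omega$ on a suitable affine open $\mc U$ of $\mc X_0(\gerp)^{\mr{perf}}(v)$ which generates $\omega$ modulo $\pi$, provided $v$ is small enough that $v/(q^d-1) < 1$. Writing $\eta = c_{\mc U}\,\xi_{\mc U}$ exhibits a unit $c_{\mc U} \in \mc O_{\mc X_0(\gerp)^{\mr{perf}}}(\mc U)^{\times}$, which will be our candidate for $s'_{\mc U}$.

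The second step is to compare $\eta$, pulled back to $\mc X_\infty$, with $\mf s$. Both are sections of $\omega_\infty = \Pi_{\mr{HTT}}^* \mc O(1)$ produced by the same Hodge--Tate--Taguchi mechanism: $\mf s$ via the infinite-level period map of Definition \ref{def:HTT}, and $\eta$ via its finite-level incarnation on the canonical subgroup. On the locus where the trivialisation $\eta^{\mr{triv}}$ restricts to a generator of the Taguchi dual of the canonical subgroup in its first coordinate, the two sections should agree modulo $\pi$ up to a unit in $1 + \pi \mc O^+_{\mc X_\infty}$. Setting $t'_{\mc U} := t_{\mc U}/c_{\mc U} = \mf s/\eta$ and $s'_{\mc U} := c_{\mc U}$ then yields the desired factorization.

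The main obstacle I anticipate is precisely the mod-$\pi$ comparison between the infinite-level fake Hasse invariant $\mf s$ and the finite-level canonical-subgroup basis $\eta$; this is the analogue of \cite[Lemma 2.25]{CHJ} and should be proved by unwinding the two Hodge--Tate--Taguchi constructions and invoking the almost-surjectivity part of Theorem \ref{thm:sousgroupe}(ii). The identification of trivialisations at level $\Gamma_0(\gerp)$ with the canonical subgroup, as already exploited in the proof of Lemma \ref{lem:P^1(F)}, should provide the necessary link between the two descriptions, possibly at the cost of shrinking $v$ to absorb the error term controlled by the parameter $w$ in Theorem \ref{thm:sousgroupe}.
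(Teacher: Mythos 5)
Your proposal is correct and matches the paper's intended argument: the paper offers no proof beyond the citation of \cite[Proposition 2.27]{CHJ}, and your reconstruction (the cocycle formula for $t_{\mc U}$ from the $\Gamma_0(\gerp)$-invariance of ${q^{\mr{perf}}_{\infty}}^{-1}(\xi_{\mc U})$, and the factorization obtained by comparing $\mf s$ with a finite-level generator of the image of the Hodge--Tate--Taguchi map on the dual of the canonical subgroup, the discrepancy being absorbed into a unit congruent to $1$ modulo $\pi$ via the almost-surjectivity in Theorem \ref{thm:sousgroupe}(ii)) is exactly the Chojecki--Hansen--Johansson strategy being invoked. Indeed, the lemma immediately following this one in the paper confirms your identification: $s'_{\mc U}\,{q^{\mr{perf}}_{\infty}}^{-1}(\xi_{\mc U})$ is a generator of the torsor $\mc F^{\mr{perf}}$ of generators of the image of the Hodge--Tate--Taguchi map in $\omega$.
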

Let $f$ in $\omega^{s}_v (\mc U)$ and note that 
\[
\gamma^*(f  {t'_{\mc U}}^s) = f  {t'_{\mc U}}^s \left(\frac{s'_{\mc U}}{\gamma^*s'_{\mc U}}\right)^s.
\]
As $s'_{\mc U}$ is  invariant by $\Gamma_0(\gerp)$, applying lemma \ref{lem:invariants} to  $f  {t'_{\mc U}}^s$ we can then embed 
\[
\omega^{s}_v (\mc U) \rightarrow \mc O_{\mc X(\gerp)^{\mr{perf}}}(\mc U)
\]
via $f \mapsto f  {t'_{\mc U}}^s$.
\begin{thm}
The sheaf $\omega^{s}_v$ is coherent and locally free of rank one.
\end{thm}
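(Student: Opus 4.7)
The plan is to produce a local trivialisation of $\omega^{s}_v$ whose inverse is precisely the embedding $f \mapsto f (t'_{\mc U})^{s}$ into $\mc O_{\mc X_0(\gerp)^{\mr{perf}}}$ just constructed. First I would restrict to an open $\mc U \subset q^{\mr{perf}}_{\infty}(\mc U_{\infty,v})$ on which Lemma \ref{lem:factor} applies, so that we have $t_{\mc U}= t'_{\mc U} s'_{\mc U}$ with $t'_{\mc U} \in 1+\pi\mc O_{\mc X_{\infty}}(q_{\infty}^{-1}(\mc U))$. Because $t'_{\mc U}-1$ is topologically nilpotent, the binomial series
\[
e_{\mc U} := (t'_{\mc U})^{-s} = \sum_{n \geq 0} \binom{-s}{n} (t'_{\mc U}-1)^n
\]
converges for every $s \in \mb Z_p$ to a unit of $\mc O_{\mc X_{\infty}}(q_{\infty}^{-1}(\mc U))$, giving the candidate generator.

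The key computation is that $e_{\mc U}$ lies in $\omega^{s}_v(\mc U)$. Since $s'_{\mc U}$ descends to $\mc X_0(\gerp)^{\mr{perf}}$ it is $\Gamma_0(\gerp)$-invariant, so the formula $\gamma^{*} t_{\mc U}=(b\mf z+d)t_{\mc U}$ from Lemma \ref{lem:factor} forces $\gamma^{*} t'_{\mc U}=(b\mf z+d) t'_{\mc U}$; feeding this into the binomial expansion yields $\gamma^{*} e_{\mc U}=(b\mf z+d)^{-s}e_{\mc U}$, which is the transformation law defining $\omega^{s}_v$. For surjectivity of the embedding, given $g \in \mc O_{\mc X_0(\gerp)^{\mr{perf}}}(\mc U)$ I would pull it back to a $\Gamma_0(\gerp)$-invariant function $\tilde{g}$ on $(q^{\mr{perf}}_{\infty})^{-1}(\mc U)$ and set $f := \tilde{g}\, e_{\mc U}$; this $f$ inherits the correct transformation law, lies in $\omega^{s}_v(\mc U)$, and is sent by the embedding to $\tilde{g}\, e_{\mc U}\,(t'_{\mc U})^{s}=\tilde{g}$, i.e.\ to $g$. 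Hence multiplication by $e_{\mc U}$ is a sheaf isomorphism $\mc O_{\mc X_0(\gerp)^{\mr{perf}}}|_{\mc U}\to\omega^{s}_v|_{\mc U}$, trivialising the sheaf locally; local freeness of rank one (and a fortiori coherence) follows.

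The main obstacle is not the local construction of $e_{\mc U}$, which is formal once the factorisation of Lemma \ref{lem:factor} is in hand, but rather the fact that the embedding actually factors through $\mc O_{\mc X_0(\gerp)^{\mr{perf}}}(\mc U)$ and not merely through the abstract ring of $\Gamma_0(\gerp)$-invariants inside $\mc O_{\mc X_{\infty}}(q_{\infty}^{-1}(\mc U))$. This identification is exactly what Lemma \ref{lem:invariants} provides, and it in turn depends on Hypothesis \ref{assum:proetale} to guarantee that sections of the integral completed structure sheaf can be approximated by honest sections. Granting this input, together with the well-behaved factor $t'_{\mc U}$ isolated by Lemma \ref{lem:factor}, reduces the statement to the binomial trivialisation described above.
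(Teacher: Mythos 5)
Your argument is correct granted the same inputs the paper itself uses, but it follows a genuinely more direct route than the paper's proof. The paper handles $s=0$ via Lemma \ref{lem:invariants}, then views $\omega^{s}_v(\mc U)$ inside the invariants of a twisted $\Gamma_0(\gerp)$-action on $\mc O_{\mc X_0(\gerp)^{\mr{perf}}}(\mc U)$, uses the \'etale Galois structure of the cover away from the cusps and Galois descent to see that this ambient module is locally free of rank one, and finally gets local freeness of $\omega^{s}_v$ by dividing by a non-vanishing section whose existence is left implicit, with a separate extension argument at the boundary. You instead exhibit the local generator explicitly: $e_{\mc U}=(t'_{\mc U})^{-s}$, defined by the $p$-adic binomial series, is a unit lying in $\omega^{s}_v(\mc U)$, and multiplication by it is inverse to the embedding $f\mapsto f\,(t'_{\mc U})^{s}$ already constructed in the paper. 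This buys two things: it supplies exactly the nowhere-vanishing section that the paper's reduction to $\omega^{0}_v$ presupposes, and, since it relies only on Lemma \ref{lem:factor} and Lemma \ref{lem:invariants} (whose proof already treats the cusps), it handles the boundary uniformly without the Galois-descent step. One point you should make explicit rather than treat as formal: the identity $\gamma^{*}e_{\mc U}=(b\mf z+d)^{-s}e_{\mc U}$ uses both the exact $\Gamma_0(\gerp)$-invariance of $s'_{\mc U}$ (asserted in the paper right after Lemma \ref{lem:factor}; the twist $(\gamma^{*}s'_{\mc U}/s'_{\mc U})^{s}$ appearing in the paper's own proof is there precisely to absorb any failure of it, and without exact invariance your $e_{\mc U}$ satisfies the correspondingly twisted transformation law instead) and the multiplicativity of $x\mapsto x^{-s}$, which is legitimate here because $\gamma^{*}t'_{\mc U}$ and $t'_{\mc U}$ are both $1$-units, so $b\mf z+d=\gamma^{*}t'_{\mc U}/t'_{\mc U}$ is being raised to the power $-s$ as a $1$-unit, and one must check this agrees with the normalisation of $(b\mf z+d)^{-s}$ used in the definition of $\omega^{s}_v$. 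Your reliance on Lemma \ref{lem:invariants} and Hypothesis \ref{assum:proetale} to identify the invariants with $\mc O_{\mc X_0(\gerp)^{\mr{perf}}}(\mc U)$ matches the paper exactly.
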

\begin{proof}
For $s=0$ this is consequence of Lemma \ref{lem:invariants}. Note that we can make $\Gamma_0(\gerp)$ acts on $\mc O_{\mc X_0(\gerp)^{\mr{perf}}}(\mc U)$ via
\[
\gamma.f=\left(\frac{\gamma^*s'_{\mc U}}{s'_{\mc U}}\right)^s \gamma^* f.
\]
Then $\omega^{s}_v (\mc U)$ falls in the invariant part for this action. 

Note that if $\mc U$ does not intersect the boundary, the cover is \'etale Galois and by Galois descent the invariant part is locally free on $\mc U$ of rank one (the rank of  $\mc O_{\mc X_0(\gerp)}$). So $\omega^{s}_v$ is a subsheaf of a rank one sheaf. It is indeed coherent: take a section $f$ and suppose it is not vanishing (shrinking $\mc U$ if necessary). Division by $f$ induces an isomorphism between $\omega^{s}_v$ and $\omega^{0}_v$, which is coherent.
We can extend everything to the boundary reasoning as in the last part of the proof of Lemma \ref{lem:invariants}.
\end{proof}

We are almost ready to compare this notion of overconvergent Drinfeld modular forms with our previous notion, but first we need to compare line bundles on $\mc X_0(\gerp)^{\mr{perf}}$ and $\mc X_0(\gerp)$. 
If $X$ is a scheme, it is known that 

$$ \mr{Pic}(X)\left[ \frac{1}{p}\right]= \mr{Pic}(X^{\mr{perf}}),$$
{\it i.e. } every line bundle comes from finite level by pullback of line bundles on the Frobenius twist $X^{(p^n)}$ \cite[Lemma 3.5]{BhattScholze}. The same holds for rigid spaces:

\begin{prop}
Let $\mc Y$ be a rigid analytic space of $F_{\gerp}$ and $\mc Y^{\mr{perf}}$ its perfection. Then 
$$ \mr{Pic}(\mc Y)\left[ \frac{1}{p}\right]= \mr{Pic}(\mc Y^{\mr{perf}}).$$
\end{prop}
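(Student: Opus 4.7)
The plan is to mimic the proof of the analogous scheme-theoretic statement \cite[Lemma 3.5]{BhattScholze}, whose argument is essentially formal and should carry over to the rigid/adic context. The starting point is that the perfection functor, as defined before Theorem \ref{thm:perfectoid} via $\mr{Spa}(R[1/\pi],R) \mapsto \mr{Spa}(R^{\mr{perf}}[1/\pi], R^{\mr{perf}})$, does not alter the underlying G-topological space: $R \to R^{\mr{perf}}$ is the filtered colimit along absolute Frobenius, which is a universal homeomorphism in characteristic $p$, so the $\mr{Spa}$'s agree as topological spaces and the admissible covers correspond. Hence I can realize $\mc O_{\mc Y^{\mr{perf}}}$ as a sheaf on the same site as $\mc O_{\mc Y}$, with
\[
\mc O_{\mc Y^{\mr{perf}}} = \varinjlim_{f \mapsto f^p} \mc O_{\mc Y}, \qquad \mc O_{\mc Y^{\mr{perf}}}^{\times} = \varinjlim_{u \mapsto u^p} \mc O_{\mc Y}^{\times}.
\]

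Next, I would compute Picard groups as $H^1$ of the sheaf of units. Since rigid analytic spaces admit a basis of quasi-compact quasi-separated affinoid opens, filtered colimits of abelian sheaves commute with $H^1$ on such a site, yielding
\[
\mr{Pic}(\mc Y^{\mr{perf}}) \;=\; H^1(\mc Y^{\mr{perf}}, \mc O^{\times}) \;=\; \varinjlim_{F^{\ast}} H^1(\mc Y, \mc O^{\times}) \;=\; \varinjlim_{F^{\ast}} \mr{Pic}(\mc Y).
\]
Finally, in characteristic $p$ one has $F^{\ast}\mc L \cong \mc L^{\otimes p}$ for every line bundle $\mc L$, so the transition map $F^{\ast}$ is multiplication by $p$ on $\mr{Pic}$, and this directed colimit is by definition $\mr{Pic}(\mc Y)[1/p]$.

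The main obstacle I foresee is justifying rigorously the commutation of $H^1$ with the filtered colimit in the rigid/adic setting. For quasi-compact quasi-separated $\mc Y$ this reduces via the \v{C}ech-to-derived spectral sequence to a finite affinoid cover, where the statement is standard; the general case should then follow by patching, since both sides satisfy descent for admissible covers. A complementary and arguably cleaner approach would be to construct an inverse to the natural map $\mr{Pic}(\mc Y)[1/p] \to \mr{Pic}(\mc Y^{\mr{perf}})$ by hand: any line bundle on $\mc Y^{\mr{perf}}$ trivializes on some admissible cover, and by quasi-compactness the trivializing cocycle descends, after multiplying by a suitable power of Frobenius, to the level of some finite Frobenius twist of $\mc Y$, giving a preimage at finite level and a well-defined inverse modulo the $p$-torsion that is inverted in passing to $\mr{Pic}(\mc Y)[1/p]$.
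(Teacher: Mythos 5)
Your argument transplants the scheme-theoretic proof of \cite[Lemma 3.5]{BhattScholze}, but the key step fails as stated in the rigid/adic setting: the structure sheaf of $\mc Y^{\mr{perf}}$ is \emph{not} the naive filtered colimit $\varinjlim_{f\mapsto f^p}\mc O_{\mc Y}$. The perfection used here (following \cite[Definition 5.1.1]{KedlayaLiuII}) is built from $R^{\mr{perf}}=\varinjlim R$ only after completion: on an affinoid $\mr{Spa}(R[1/\pi],R)$ the relevant ring is $\widehat{R^{\mr{perf}}}[1/\pi]$, since the uncompleted colimit is not an affinoid (nor even a Banach) algebra and does not define an adic space. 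Consequently the formal commutation of $H^1(-,\mc O^{\times})$ with filtered colimits of sheaves on a qcqs site, which is the engine of the scheme argument (where no completion intervenes because the perfection is an honest inverse limit of schemes along affine maps), does not apply: you would have to commute cohomology with a \emph{completed} colimit, and that is no longer a site-theoretic generality but a genuine analytic statement.

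Bridging exactly this gap is what the paper's one-line proof imports: \cite[Lemma 5.6.8]{KedlayaLiuII}, which says that every finite projective module over the completed perfection arises by base change from a finite level of the Frobenius tower (with the corresponding full faithfulness), whence $\mr{Pic}(\mc Y^{\mr{perf}})=\varinjlim_{F^{*}}\mr{Pic}(\mc Y)=\mr{Pic}(\mc Y)[1/p]$, using, as you correctly note, that $F^{*}\mc L\cong\mc L^{\otimes p}$. Your fallback sketch (descending a trivializing cocycle) is closer in spirit, but the assertion that the cocycle ``descends, after multiplying by a suitable power of Frobenius, to a finite Frobenius twist'' is precisely where an approximation argument for units of the completed colimit is required, both to produce an honest cocycle at finite level and to get injectivity of $\mr{Pic}(\mc Y)[1/p]\to\mr{Pic}(\mc Y^{\mr{perf}})$ (two finite-level line bundles that become isomorphic over the completion must already be isomorphic after a finite Frobenius pullback). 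Either supply such an approximation/descent argument or, as the paper does, invoke the Kedlaya--Liu descent of finite projective modules from the perfection to finite level.
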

\begin{proof}
This is a direct consequence of \cite[Lemma 5.6.8]{KedlayaLiuII} which says that every finite projective module on the perfection comes via extension of scalars from a finite level. 
\end{proof}

Recall the rigid torsor $\mc F$ of \cite[D\'efinition 4.5]{NicoleRosso}: it parametrises generators of the image of the Hodge--Tate--Taguchi map in $\omega$. It is a torsor for $G:={ A_{\gerp}^{\times} \Big(1 + \pi^v \mc O_{\mc X(v)} \Big)} $. 

We pull it back to a sheaf $\mc F^{\mr{perf}}$ on $q^{\mr{perf}}_{\infty}(\mc U) \rightarrow \mc X_0(\gerp)^{\mr{perf}}$ and it is now a torsor for $G^{\mr{perf}}={ (A_{\gerp}^{\times})^{1/p^{\infty}} \Big(1 + \pi^v \mc O_{\mc X(v)^{\mr{perf}}} \Big)}$. Here $(A_{\gerp}^{\times})^{1/p^{\infty}}$ is the group of all $p^{n}$-roots of elements of $A_{\gerp}^{\times}$ which is isomorphic to a finite product of infinitely many copies of $\mb Q_p$. Using the arguments of the proof of Lemma \ref{lem:neighOrdlocus} on the fact that $X_v$ is a neighbourhood of the ordinary locus  we get:
\begin{lem}
The section $s'_{\mc U} {q^{\mr{perf}}_{\infty}}^{-1}(\xi_{\mc U})$ of Lemma  \ref{lem:factor} is a generator of $\mc F^{\mr{perf}}$. 
\end{lem}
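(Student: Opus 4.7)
The plan is to reduce the claim to showing that the fake Hasse invariant $\mf s = \Pi_{\mr{HTT}}^* s$ generates the pullback of $\mc F$ to $\mc X_{\infty}(v)$, and then to use the factorisation of Lemma \ref{lem:factor} to descend this generator down to $q^{\mr{perf}}_{\infty}(\mc U) \subset \mc X_0(\gerp)^{\mr{perf}}$.

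The first step is to argue that $\mf s$ generates the inverse image of $\mc F$. By Definition \ref{def:HTT} and Theorem \ref{previousthm}, the line in $T_{\gerp}(\varphi) \otimes \overline K$ picked out at a geometric point by $\Pi_{\mr{HTT}}$ is precisely the kernel of $\mr{HTT}_{\varphi}$, so dually the surjection $T_{\gerp}(\varphi^D) \otimes \overline K \twoheadrightarrow \mr{Lie}(\varphi)^{\vee} \otimes \overline K$ is the one tautologically cut out by the period map. Under the identification $\omega_{\infty}|_{\mc U_{\infty,v}} \simeq \Pi_{\mr{HTT}}^* \mc O(1)$ from the first lemma of this subsection, the section $s$ of $\mc O(1)$ pulls back to the image under the (dualised) HTT map of a trivialisation of $T_{\gerp}(\varphi^D)$, and Theorem \ref{thm:sousgroupe}(ii) identifies this image, modulo the tolerated $\pi^w$-error absorbed in $G^{\mr{perf}}$, with a generator of $\mc F$.

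Next, Lemma \ref{lem:factor} gives $\mf s = t'_{\mc U} \cdot s'_{\mc U} \cdot {q^{\mr{perf}}_{\infty}}^{-1}(\xi_{\mc U})$ with $t'_{\mc U} \in 1 + \pi \mc O_{\mc X_{\infty}}(\mc U)$; since $v \leq 1/(q^d+1) < 1$ we have $1 + \pi \mc O \subset 1 + \pi^v \mc O$, so $t'_{\mc U}$ lies in the subgroup $1 + \pi^v \mc O_{\mc X(v)^{\mr{perf}}}$ of $G^{\mr{perf}}$. Therefore $s'_{\mc U} \, {q^{\mr{perf}}_{\infty}}^{-1}(\xi_{\mc U})$ differs from $\mf s$ by an element of $G^{\mr{perf}}$ and is itself a generator of the pullback of $\mc F^{\mr{perf}}$. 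Since $s'_{\mc U} \in \mc O_{\mc X_0(\gerp)^{\mr{perf}}}$ and $\xi_{\mc U}$ is a basis of $\omega^{\mr{perf}}$ on $q^{\mr{perf}}_{\infty}(\mc U)$, the product $s'_{\mc U} \, \xi_{\mc U}$ is already defined downstairs and provides the desired generator of $\mc F^{\mr{perf}}$.

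The hard part will be Step 1: one must carefully verify that the tautological identification $\omega_{\infty} \simeq \Pi_{\mr{HTT}}^* \mc O(1)$ matches the construction of the torsor $\mc F$ via the HTT map, and that the $\pi^w$-discrepancy in Theorem \ref{thm:sousgroupe}(ii) is absorbed in the $1 + \pi^v$ tolerance of $G^{\mr{perf}}$. By Lemmas \ref{lem:P^1(F)} and \ref{lem:neighOrdlocus}, $\mc U_{\infty,v}$ sits over a strict neighbourhood of the ordinary multiplicative locus, where $\omega$ is canonically the dual Lie algebra of the canonical subgroup (isomorphic to $\mr{CH}$) and the HTT map from its dual is an isomorphism modulo the allowed error, so no information is lost in the comparison.
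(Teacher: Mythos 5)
Your argument is correct and follows essentially the same route as the paper, whose own justification is precisely the pointer to Lemma \ref{lem:neighOrdlocus}: over $\mc U_{\infty,v}$ the first coordinate of the trivialisation cuts out the canonical subgroup, so $\mf s$ is (up to the $\pi^w$-error tolerated by $G^{\mr{perf}}$) a generator of the image of the Hodge--Tate--Taguchi map, and dividing by $t'_{\mc U}\in 1+\pi\mc O$ changes it only by an element of the structure group. Your additional bookkeeping (absorbing $t'_{\mc U}$ into $1+\pi^v\mc O$ and noting that $s'_{\mc U}\xi_{\mc U}$ is defined downstairs) just makes explicit what the paper leaves implicit.
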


\noindent By abuse of notation, we denote by the same symbol the corresponding element of $G^{\mr{perf}} $ (which can be seen as an inverse limit of generators of the images of the Hodge--Tate--Taguchi map in $\omega$ along the Frobenius tower).

We can go from the new sheaf $\omega^s_v$ to the perfection of the older sheaf $\omega^{s,\mr{NR}}$ as follows. First note that there is no action of $G$ on the sections of $\omega^s_v$. So given $f$ in $\omega^{s,\mr{NR}}(\mc U)$ the function $(s'_{\mc U} {q^{\mr{perf}}_{\infty}}^{-1}(\xi_{\mc U}))^s  f $ is an element of $\omega^s_v(\mc U)$ which transforms correctly for the action of $\Gamma_0(\gerp)$ by the cocycle relation and it is invariant by the action of $G$ as $f$ is homogenous of weight $-s$. This map is bijective, hence:

\begin{thm}
The sheaf $\omega^s_v$ is the pullback of  $\omega^{s,\mr{NR}}$ on the perfection $\mc X_v^{\mr{perf}}$ .
\end{thm}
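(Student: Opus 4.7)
The plan is to upgrade the explicit formula sketched immediately before the theorem statement into a morphism of sheaves, and then verify it is an isomorphism. For each admissible open $\mc U \subset \mc X_v^{\mr{perf}}$, I would define
\[
\Phi_{\mc U} : \omega^{s,\mr{NR}}(\mc U) \longrightarrow \omega^{s}_v(\mc U), \qquad f \longmapsto \bigl(s'_{\mc U}\, {q^{\mr{perf}}_{\infty}}^{-1}(\xi_{\mc U})\bigr)^{s} \cdot f,
\]
where $\xi_{\mc U}$ is a local generator of $\omega^{\mr{perf}}$ and $s'_{\mc U}$ is the factor of $t_{\mc U}$ identified in Lemma \ref{lem:factor}. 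Since sections of $\omega^{s,\mr{NR}}$ are, by construction in \cite{NicoleRosso}, functions on the torsor $\mc F^{\mr{perf}}$ homogeneous of weight $-s$ under $G^{\mr{perf}}$, and the $s$-th power of the chosen generator is homogeneous of weight $s$ by the identification preceding the theorem, the product is $G^{\mr{perf}}$-invariant and defines a genuine function on $\mc X_{\infty}$ above $\mc U$.

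Two checks are needed to land in $\omega^s_v(\mc U)$. The $G^{\mr{perf}}$-invariance just given shows that $\Phi_{\mc U}(f)$ descends appropriately. For the $\Gamma_0(\gerp)$-transformation, I would use the cocycle relation $\gamma^* t_{\mc U} = (b\mf z + d)\, t_{\mc U}$ from Lemma \ref{lem:factor}: the decomposition $t_{\mc U} = t'_{\mc U}\, s'_{\mc U}$ with $t'_{\mc U} \in 1 + \pi\mc O$ places the cocycle on $s'_{\mc U}$, yielding exactly $\Phi_{\mc U}(f)(\gamma \mf z) = (b\mf z + d)^{-s}\Phi_{\mc U}(f)(\mf z)$. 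The inverse is given by dividing by $(s'_{\mc U}\, {q^{\mr{perf}}_{\infty}}^{-1}(\xi_{\mc U}))^{s}$: any $g \in \omega^s_v(\mc U)$ lifts to a $G^{\mr{perf}}$-homogeneous function of weight $-s$ on $\mc F^{\mr{perf}}$, which is a section of the perfection of $\omega^{s,\mr{NR}}$. Independence from the choice of $\xi_{\mc U}$ is formal: a change $\xi'_{\mc U} = u\, \xi_{\mc U}$ by a unit scales both sides of the formula by $u^{s}$.

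The main obstacle is ensuring the construction extends across the cusps, where the pro-\'etale machinery of Lemma \ref{lem:invariants} does not apply directly. My plan would be to perform the construction first on $\mc U \setminus \{\mr{cusps}\}$ and then extend by the maximum modulus argument used at the end of the proof of Lemma \ref{lem:invariants}; the key point is that the Tate--Drinfeld module is ordinary, so $\omega^{\mr{perf}}$ carries a canonical bounded generator near the cusps, and the $s$-th power $(t'_{\mc U})^{s}$ is well-defined via the binomial series since $t'_{\mc U} \in 1 + \pi\mc O$ and $s \in \mb Z_p$. Once continuity across the cusps is established, functoriality in $\mc U$ is immediate and the local isomorphisms glue to the desired sheaf isomorphism on $\mc X_v^{\mr{perf}}$.
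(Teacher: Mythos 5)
Your proposal is correct and is essentially the paper's own argument: the comparison there is exactly the map $f \mapsto \bigl(s'_{\mc U}\,{q^{\mr{perf}}_{\infty}}^{-1}(\xi_{\mc U})\bigr)^{s} f$, justified by the fact that $s'_{\mc U}\,{q^{\mr{perf}}_{\infty}}^{-1}(\xi_{\mc U})$ generates $\mc F^{\mr{perf}}$, that homogeneity of weight $-s$ under $G^{\mr{perf}}$ gives invariance, and that the cocycle relation of Lemma \ref{lem:factor} gives the $\Gamma_0(\gerp)$-transformation, with bijectivity concluding. Your extra verifications (explicit inverse, independence of the choice of $\xi_{\mc U}$, extension across the cusps as in Lemma \ref{lem:invariants}) merely spell out details the paper leaves implicit.
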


\section{Some open problems on families of Drinfeld modular forms}

\subsection{Modularity theorem}

In the appendix of \cite{NicoleRosso}, we showed that the space of ordinary modular forms of level $T$ and rank $2$ is one-dimensional over the Iwasawa algebra, proving an $r=t$ theorem. The restriction on working only with level $T$ is imposed by the fact that the ramification of the Galois representation associated with a Drinfeld modular forms is not known. More precisely, given a $\gerp$-adic Galois representation, we do not know if the ramification at a different prime $\gerq$ is finite or not.  If we could develop a theory of vanishing cycles for B\"ockle--Pink $\tau$-sheaves \cite{BocklePink}, it is likely that we could show that the ramification at $\gerq$ is finite, and that a similar $r=t$ theorem could be proven. 

We present a conjectural application which has been suggested to us by C. Popescu. Let $\chi$ be an element of $\mr{Pic}^0(A)$ and $\varphi_{\chi}$ the corresponding Carlitz--Hayes module. It is known that the Galois representation $\rho_{\varphi_{\chi}}$ is unramified outside $\gerp$ and $\infty$ (see \cite[Theorem 5]{Takahashi_ellmod} and \cite[Theorem 3.2]{Hayes}).  An $r=t$ theorem  would show that $\rho_{\varphi_{\chi}}$ is `modular' of type II \cite[Definition 6]{GossModularity}, i.e., the Galois representation on the $\gerp$-torsion of $\varphi_{\chi}$ arises from a $\pi$-adic Drinfeld modular form. This is only known for the Carlitz module of $\mb F_q[T]$ \cite[Example 10]{GossModularity} which is associated to the  Drinfeld modular form $\Delta$, which is not ordinary for any prime.

\subsection{Non noetherian eigenvarieties}

In his eigenvariety paper, Buzzard points out that Lemma A1.6 of \cite{Col} is not complete: roughly speaking, Coleman claims that given a completely continuous operator $\mr U$ on an orthonormalisable module $M$ over a Banach algebra $A$ and a finitely generated submodule $M'$, a finite number of coordinates are enough to determine if an element of $M'$ is $0$ or not. In the noetherian case, this is handled by \cite[Lemma 2.3]{Buz}. 
It would interesting to investigate the following:

\begin{question}
Given a completely continuous operator $\mr U$ on an orthonormalisable module $M$ over a non-noetherian Banach algebra $A$, can one define a Fredholm determinant $F_{\mr U}(X)$? If not, are there extra conditions on $A$ under which this holds true?
\end{question}

Once we have a good definition of the Fredholm determinant $F_{\mr U}(X)$, the next step is the construction of the spectral variety, which is defined as the closed subset \[ \mc Z:= V(F_{\mr U}(X)) \subset \mb A^1_{\mr{Spa}(A,A^{\circ})}.\] 
A difficult point is the generalisation of \cite[Lemma 4.1]{Buz} which proves that $\mc Z$ is flat, as for the moment we lack a flatness criterion for non-noetherian rings. For example, we believe it would be enough to generalise \cite[Lemma 10.127.4]{StacksProject}, replacing `essentially of finite presentation' by `topologically of finite type'.

\subsection{Families of modular forms for $t$-motives}
An obstacle to generalising Drinfeld modular forms in higher dimensions is our lack of understanding of algebraic families of Anderson $A$-motives. Still, the local theory as developed in \cite{hartl_singh} gives a nice description of the duality between local Anderson modules and local shtukas. The properties of the Hodge--Tate--Taguchi map in higher dimension are not as tractable as in dimension one: indeed, the proof of the almost surjectivity in Theorem \ref{thm:sousgroupe}, in our one-dimensional case of arbitrary rank, was done by hand. For abelian varieties (or more generally, $p$-divisible groups), the proof of almost surjectivity is done using $p$-adic Hodge theory (see \cite[Appendix C]{FarguesIsom} or \cite[\S III.2.1]{ScholzeTorsion}), whose analogue is lacking in our context.


\subsection{The maximal slope}

Given an elliptic modular form over $\mathbb{Q}$ of weight $k$, we know that if $p$ does not divide the level, the possible $\mr U_p$-eigenvalues have slopes between $0$ and $k-1$. This is because the constant term of the Hecke polynomial is, up to a root of unit, $p^{k-1}$.

For Drinfeld modular forms, the Hecke polynomial has constant term $0$, which a priori allows any possible slope.
\\

In level $\Gamma_0(T)$, Bandini and Valentino conjecture that the maximal slope is always $(k-2)/2$, and this maximal slope arises exclusively from (suitably defined) newforms. Recently, they proved a quadratic bound for the maximal slope \cite[Theorem 6.4]{BandiniValentino3}.

In level $\Gamma_1(T)$, explicit computations for $A = \mb F_q [T]$ hint to the fact that the maximal slope is at least always bounded by $k-1$, if not better estimates.
The following Figures 1--4 are calculated using Hattori's tables \cite{HattoriTable} relying on the formulae of Bandini and Valentino \cite{BandiniValentino,BandiniValentino2}. We use the $x$-axis to indicate the weight and the $y$-axis to indicate the maximal slope appearing in that fixed weight. Note that the patterns of the maximal slope distribution vary widely with $q$. At the time of writing, we have no clue towards a conceptual explanation of this variation.

\begin{figure}
  \includegraphics[width=\linewidth]{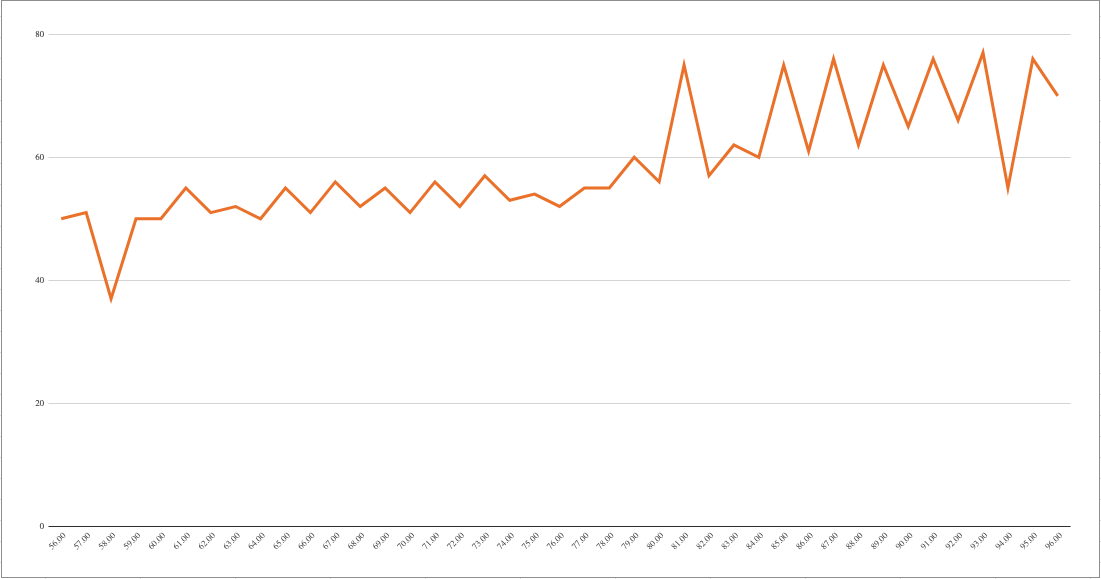}
  \caption{$q=5$}
  \label{fig:5}
\end{figure}

\begin{figure}
  \includegraphics[width=\linewidth]{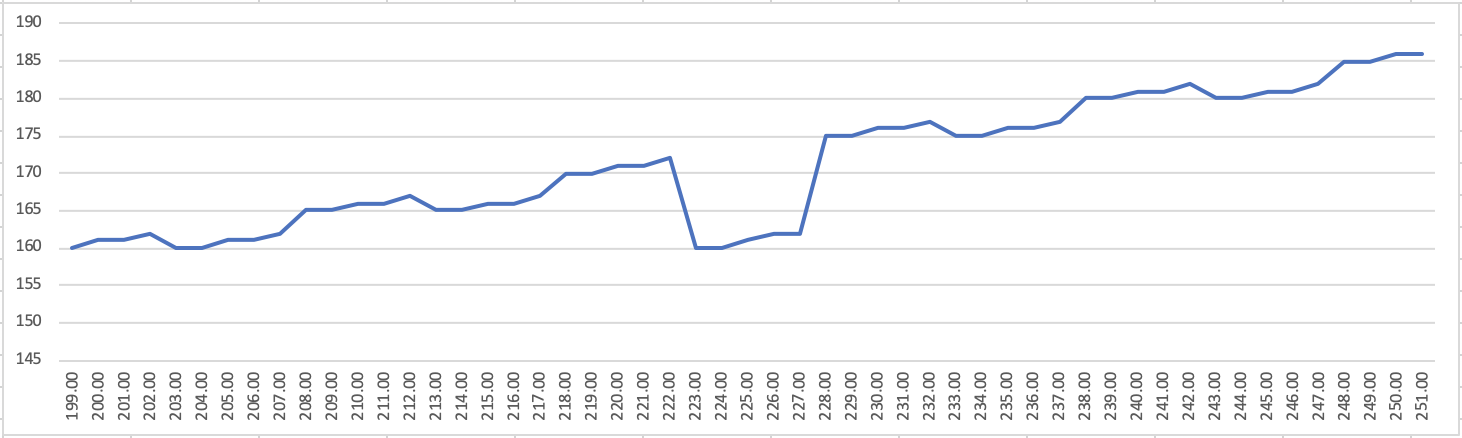}
  \caption{$q=125$}
  \label{fig:125}
\end{figure}

\begin{figure}
  \includegraphics[width=\linewidth]{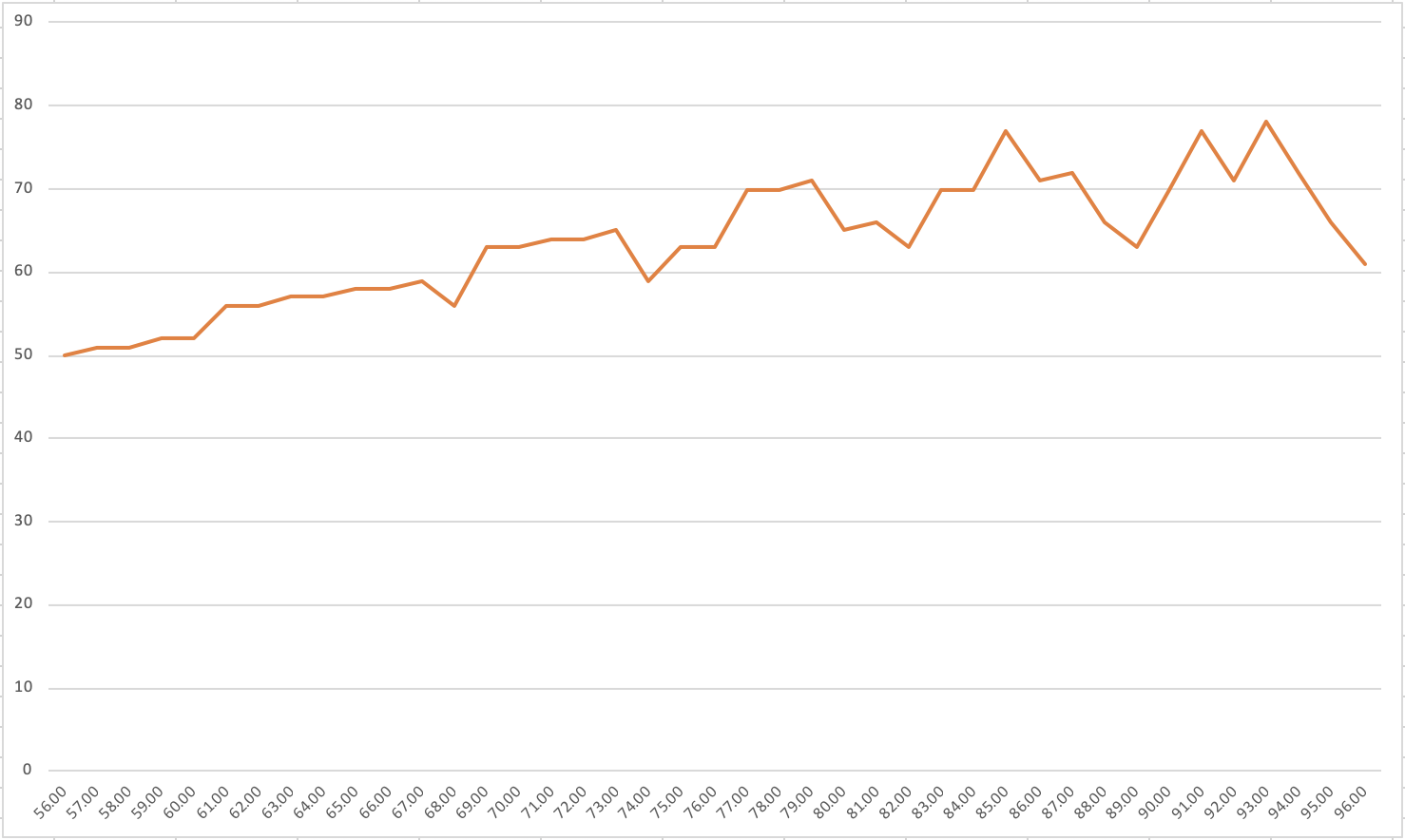}
  \caption{$q=7$}
  \label{fig:7}
\end{figure}

\begin{figure}
  \includegraphics[width=\linewidth]{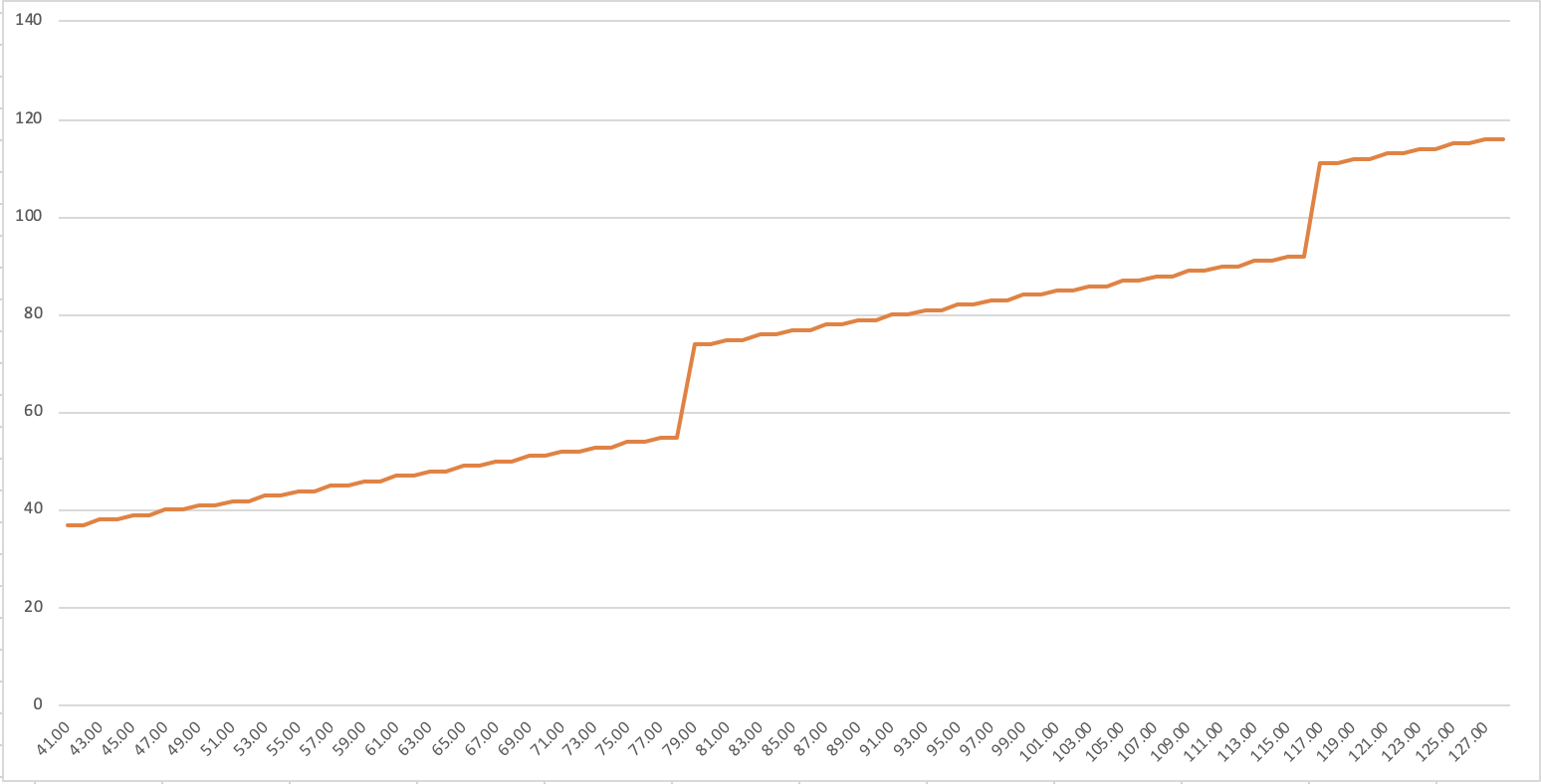}
  \caption{$q=37$}
  \label{fig:37}
\end{figure}

\subsection{Infinite slope  Drinfeld modular forms}

Suppose that $\gerp$ is principal and let $f$ be an eigenform of $\mr T_{\pi}$ of prime-to-$\pi$ level and eigenvalue $\lambda$. In rank two, the explicit formula for the $\pi$-stabilisation of $f$ \cite[\S 3.2]{BandiniValentino} (with $m=0$ and a slightly different normalisation of $\mr U_{\pi}$, as ours is theirs divided by $\pi$) tells us that
\[ \mr U_{\pi} \left( f(z) - \frac{\pi^{k-1}}{\lambda} f( \pi z) \right)= \lambda \left( f(z) - \frac{\pi^{k-1}}{\lambda} f( \pi z) \right), \;\; \mr U_{\pi}  f(\pi z)=0 . \]

\noindent
This means that we have plenty of modular forms of level $\Gamma_0(\pi)$ and infinite slope, which is never the case for classical modular forms! In \cite[Corollaire 5.10]{NicoleRosso}, we show that if $f$ is an overconvergent modular form of weight $k$ and slope smaller than $k-1$, then it is classical of level $\Gamma_0(\pi)$. A natural question is then the following:

\begin{que}
Given an overconvergent Drinfeld modular form of weight $k$ and infinite slope, is there a criterion to decide whether $f$ is classical?
\end{que}

\noindent
Moreover, at least in the ordinary case, we know from \cite[Th\'eor\`eme 3.14]{NicoleRosso} that if the weight is large enough, the form $f$ is not only classical, but it comes via $\pi$-stabilisation from a form of prime-to-$\pi$ level. Hence, for a Zariski dense set of points $\set{f_k}$ in an ordinary family, we can find a corresponding classical Drinfeld modular form $\tilde{f_k}$ of infinite slope, whose prime-to-$\pi$ Hecke eigenvalues vary in a Iwasawa algebra. This leads naturally to the following:

\begin{que} Do continuous families of infinite slope Drinfeld modular forms exist? 
\end{que}

\subsection{Horizontal control theorem}

An alternative approach to classical Hida theory does not vary the weight of the modular forms but varies instead the level at $p$ of the modular curve, and then shows that the ordinary parts of the $\mr{H}^1$ of these curves glue to a finitely generated $\Lambda$-adic module. This approach seems much harder to understand for function fields: for example, there are only $q^d-1$ finite order characters of $\left( A_{\gerp}/\gerp^r\right)^{\times} $, independently of $r$. 

As far as we know, the best known result towards a horizontal control theorem  la Hida as alluded to above is Marigonda's unpublished 2008 PhD thesis for Drinfeld modular curves \cite[Theorem 11]{Marigonda}.

\begin{thm}
 Let $J_n$ be the Jacobian of $\mc X_1(\pi^n)$ and $G_r = \left( 1 + \pi A_{\gerp}\right)/\left( 1 + \pi^r A_{\gerp}\right)$. Then the ordinary part of the $p$-adic Tate module $T_p(J_n)$ is free over $\mb Z [G_r]$ of rank bounded by the rank of $T_p(J_n)$.
\end{thm}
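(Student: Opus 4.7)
The plan is to mimic Hida's classical horizontal control argument, transplanted to the Drinfeld setting. The group $G_r$ acts naturally on $\mc X_1(\pi^n)$ via diamond operators (seeing $G_r$ as a subgroup of $(A_\gerp/\pi^n)^{\times}$ through the inclusion of principal units, for $r \leq n$), so it acts on the Jacobian $J_n$ and hence on $T_p(J_n)$. The ordinary idempotent $e = \lim_m \mr{U}_\pi^{m!}$ commutes with this action since the $\mr{U}_\pi$ and diamond operators commute, so $e T_p(J_n)$ inherits a $\mb{Z}_p[G_r]$-module structure. Crucially, $1 + \pi A_\gerp$ is a pro-$p$ group (because $A_\gerp$ is an $\mb F_{p^d}\pwseries{\pi}$-module in our equal characteristic setting), hence $G_r$ is a finite abelian $p$-group and $\mb{F}_p[G_r]$ is local, which opens the door to Nakayama/descent arguments.

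The next step, which is the technical heart, is a control statement at the bottom of the tower: one should exhibit a $G_r$-equivariant isomorphism, or at least a surjection with controlled kernel, of the form
\[
\bigl(e\, T_p(J_n)\bigr)_{G_r} \;\cong\; e\, T_p(J_{n'}),
\]
for the appropriate $n'$ (the "base level" corresponding to $G_r$ being trivial). In characteristic zero, this is a classical consequence of Ihara's lemma together with multiplicity one for the Hecke algebra acting on ordinary forms. In our setting, the analogue of Ihara's lemma should be extractable from the geometry of the degeneracy maps $\mc X_1(\pi^n) \rightrightarrows \mc X_1(\pi^{n-1})$ and the fact, used extensively in the first part of this paper, that the canonical subgroup tower is purely inseparable after reduction; combined with the Eichler--Shimura style identification of $T_p(J_n)$ with a cohomology group on which $\mr{U}_\pi$ acts, this should yield the desired control.

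Granting the control statement, freeness follows by the standard $p$-group trick: a finitely generated $\mb{Z}_p[G_r]$-module $M$ with $M / (\gamma - 1)M$ torsion-free over $\mb{Z}_p$ (for $\gamma$ a topological generator of each factor of $G_r$) is $\mb{Z}_p[G_r]$-free, by repeatedly applying Nakayama's lemma to the maximal ideal of $\mb{F}_p[G_r]$. The torsion-freeness of the coinvariants comes from the $p$-adic Tate module interpretation. The rank bound is then immediate: if $eT_p(J_n)$ is free of rank $\rho$ over $\mb{Z}_p[G_r]$, then as a $\mb{Z}_p$-module it has rank $\rho \cdot |G_r|$, which must be bounded by $\rank_{\mb{Z}_p} T_p(J_n) = 2 \dim J_n$.

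The main obstacle, as anticipated, is the control isomorphism, which is really where the input specific to Drinfeld modular curves enters. Two technical points need care: \emph{(i)} one must check that the relevant degeneracy maps between $\mc X_1(\pi^n)$'s behave well on the ordinary locus (where the canonical subgroup is available and the argument of Lemma \ref{lem:lubinkatz} applies), and \emph{(ii)} one must verify that no pathology occurs at the cusps, where in principle the failure of \'etaleness of $\Gamma_1 \to \Gamma_0$ in higher rank (noted in the excerpt) is avoided here since we remain in rank two. A conceptually cleaner route, if one wished to avoid Ihara-type arguments altogether, would be to leverage the perfectoid space $\mc X_\infty$ of Theorem \ref{thm:perfectoid} together with the Hodge--Tate--Taguchi period map: the $G_r$-equivariant structure of the completed cohomology of $\mc X_\infty$, restricted to the ordinary locus, should package all the $e\, T_p(J_n)$ into a single $\mb{Z}_p\pwseries{1+\pi A_\gerp}$-module whose finite level quotients recover the statement.
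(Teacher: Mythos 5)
A preliminary remark: the paper does not prove this statement at all --- it is quoted from Marigonda's unpublished thesis \cite[Theorem 11]{Marigonda} --- so your argument has to stand on its own, and as written it does not. The step you yourself call the technical heart, the control isomorphism $\bigl(e\,T_p(J_n)\bigr)_{G_r}\cong e\,T_p(J_{n'})$, is never established: no Ihara-type lemma is known for Drinfeld modular curves, and ``should be extractable from the degeneracy maps'' is not an argument. Worse, granting such a control statement begs the very question this section of the paper is raising: with it, you would obtain freeness of rank independent of $n$ and, passing to the limit, exactly the horizontal $\Lambda$-freeness that the paper leaves as an open question after the theorem. The quoted theorem is deliberately weaker (rank only \emph{bounded by} $\rank T_p(J_n)$, which the paper itself remarks ``is not telling us much''), precisely because no horizontal control is available; so your route proves the stated theorem only by assuming something strictly stronger and unproven.

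The descent step is also incorrect as stated: a finitely generated $\mb Z_p[G_r]$-module whose $G_r$-coinvariants are $\mb Z_p$-torsion-free need not be free --- $\mb Z_p$ with trivial $G_r$-action is a counterexample. The classical Hida argument needs, besides control, a matching rank count such as $\rank_{\mb Z_p} e\,T_p(J_n)=\vert G_r\vert\cdot\rank_{\mb Z_p} e\,T_p(J_{n'})$, normally extracted from dimension formulas or Tate-cohomology vanishing, and you supply neither. Finally, note that $J_n$ is an abelian variety in characteristic $p$, so $T_p(J_n)$ has $\mb Z_p$-rank equal to the $p$-rank of $J_n$, which is at most $\dim J_n$ and can vary --- not $2\dim J_n$ as in your rank computation --- and there is no Eichler--Shimura-style description of it; this is one of the structural reasons (alongside the failure of Emerton's approach noted in the paper) why the classical horizontal picture does not transfer, and why the concluding suggestion via completed cohomology of $\mc X_\infty$ remains speculative rather than a proof.
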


\noindent
As the rank of the latter is known to grow with the index of $\Gamma_1(\pi^r)$, this is not telling us much on the possibility of a horizontal Hida family. Moreover, the alternative approach to Hida theory due to Emerton \cite{EmertonHida} does not seem to apply here, as it relies on the fact that $\mb Z_p^{\times}$ is topologically generated by one element, while we are very much in a non-noetherian situation.

\begin{que} 
Is the inverse limit $
\varprojlim_n T_p(J_n)^{\mr{ord}} $
 a finite free $\Lambda$-module?
\end{que}

\bibliographystyle{amsalpha}
\bibliography{referencesDrinfeld}
\end{document}